\newtheorem{theorem}{Theorem}[section]
\newtheorem{lemma}[theorem]{Lemma}
\newtheorem{fact}[theorem]{Fact}
\newtheorem*{claim*}{Claim}
\newtheorem*{theorem*}{Theorem}
\newtheorem*{subclaim*}{Subclaim}
\newtheorem{remark}[theorem]{Remark}
\newtheorem{cor}[theorem]{Corollary}
\newtheorem{question}[theorem]{Question}
\theoremstyle{definition}
\newtheorem{definition}[theorem]{Definition}
\newcommand{\supp}{\rm{supp}}
\newcommand{\mbbQ}{\mathbb{Q}}
\newcommand{\mbbR}{\mathbb{R}}
\newcommand{\mbbP}{\mathbb{P}}
\newcommand{\mbbB}{\mathbb{B}}
\newcommand{\mcalP}{\mathcal{P}}
\newcommand{\mcalI}{\mathcal{I}}
\newcommand{\mcalJ}{\mathcal{J}}
\newcommand{\mcalA}{\mathcal{A}}
\newcommand{\mcalD}{\mathcal{D}}
\newcommand{\mcalE}{\mathcal{E}}
\newcommand{\collkappa}{Coll(\omega,<\kappa)}
\newcommand{\restrict}{{\upharpoonright}}
\newcommand{\bool}[1]{\llbracket {#1} \rrbracket}
\newcommand{\PI}{\mathbb{P}_{\mathcal{I}}}
\newcommand{\PJ}{\mathbb{P}_{\dot{\mathcal{J}}}}
\newcommand{\PImu}
{\mathbb{P}_{\mathcal{I}\upharpoonright\mu}}
\newcommand{\PJmu}
{\mathbb{P}_{\dot{\mathcal{J}}\upharpoonright\mu}}
\newcommand{\urltilde}{\kern -.15em\lower .7ex\hbox{~}\kern .04em}
\begin{document}

\title{Stationary tower forcings and universally Baire sets}
\author{Toshimasa Tanno}
\date{\today}
\address{
	\newline
	Graduate School of System Informatics, Kobe University, \newline
	Rokko-dai 1-1, Nada, Kobe 657-8501,
	Japan
}
\email{211x503x@stu.kobe-u.ac.jp}

\maketitle
\begin{abstract}{
We investigate properties of stationary tower forcings and give conditions on stationary towers to derive the universally Baireness of sets of reals in $L(\mbbR)$.
}
\end{abstract}


\section{Introduction}
 Woodin formulated a large cardinal, which is called Woodin cardinal, and proved the following theorem with Shelah.

\begin{theorem}[Shelah-Woodin  \cite{shelah1990large}]\label{regularity}
 If there are $\omega$ Woodin cardinals and a measurable cardinal above them,
 then every set of reals in $L(\mbbR)$ has the regularity properties: the Lebesgue measurability, the Baire property, and the perfect set property.
\end{theorem}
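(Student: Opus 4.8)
The plan is to split the argument into two essentially independent halves: first derive the Axiom of Determinacy inside $L(\mbbR)$ from the large cardinal hypothesis, and then invoke the classical fact that determinacy forces the regularity properties. So the real target is the implication
\[
(\text{$\omega$ Woodin cardinals with a measurable above them}) \ \Longrightarrow\ L(\mbbR) \models \mathrm{AD},
\]
after which every set of reals in $L(\mbbR)$, being a set of reals in a model of $\mathrm{AD}$, inherits all three regularity properties by the standard game-theoretic arguments.

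For the first half I would build homogeneous tree representations and propagate determinacy up the complexity hierarchy. The base case is Martin's theorem that a measurable cardinal makes every $\Pi^1_1$ set homogeneously Suslin, hence determined. The engine is then the Martin--Steel mechanism: if $\delta$ is Woodin and $A$ is weakly homogeneously Suslin via trees below $\delta$, then its complement is homogeneously Suslin. Running this through the first $n$ Woodin cardinals, with the measurable anchoring the base case, yields $\Pi^1_{n+1}$-determinacy for every $n$, that is, full projective determinacy. To climb past the projective sets into all of $L(\mbbR)$ I would invoke Woodin's upgrade: using the $\omega$ Woodins cofinally and the measurable above them to reflect the tree construction, one obtains determinacy for every set of reals definable over $L(\mbbR)$, which is precisely $L(\mbbR) \models \mathrm{AD}$.

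For the second half each regularity property is extracted from determinacy by choosing the right auxiliary game, all of which can be coded as integer games and hence are determined once $L(\mbbR)\models\mathrm{AD}$, the witnessing strategies and coverings being absolute enough to transfer the conclusions to $V$. For the Baire property I would use the Banach--Mazur game, whose determinacy shows that a set is either meager on a basic open set or comeager there; localizing over a countable base then yields that its symmetric difference with an open set is meager. For the perfect set property I would use the $*$-game of Davis, in which one player's win yields a perfect subset of $A$ and the other's a countable cover, so that determinacy gives the Cantor--Bendixson dichotomy for $A$. For Lebesgue measurability I would use the covering game, which forces the set to be squeezed in measure and hence to be measurable.

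The genuine obstacle is the final step of the first half: pushing homogeneous representations all the way through $L(\mbbR)$ rather than stopping at the projective level. The propagation across a single Woodin cardinal is the Martin--Steel step, but organizing infinitely many of them and using the measurable above to reflect the construction so that it captures the sets built from $\mbbR$ --- not merely the projective ones --- is exactly where Woodin's iteration and comparison machinery enters. This is the deep part of the theorem and the place where the measurable above the $\omega$ Woodins is indispensable.
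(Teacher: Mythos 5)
First, a point of reference: the paper does not prove this statement at all --- it is quoted from Shelah--Woodin as background motivation, so there is no in-paper proof to match step by step. What the paper does tell you, in the surrounding discussion, is which proof it has in mind: Theorem 1.1 is proved ``using large cardinal properties of a tower consisting of non-stationary ideals,'' i.e.\ the stationary tower. Your route is genuinely different. You first derive $L(\mbbR)\models\mathrm{AD}$ from the hypothesis and then extract the three regularity properties from the Banach--Mazur, Davis, and covering games; the outline is mathematically sound, and your remark about transferring the conclusions to $V$ is correct since the witnesses (Borel codes, open covers, perfect trees) are reals and the relevant statements are upward absolute from $L(\mbbR)$. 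However, the load-bearing step --- that $\omega$ Woodin cardinals with a measurable above imply $\mathrm{AD}$ in $L(\mbbR)$ --- is a later and strictly deeper theorem of Woodin than the statement being proved, and your treatment of it (``reflect the tree construction,'' ``iteration and comparison machinery'') is a citation rather than an argument, so as a self-contained proof the proposal rests on a black box harder than its conclusion. The original Shelah--Woodin argument, which is the one this paper abstracts, avoids determinacy entirely: one forces with the stationary tower below one of the Woodin cardinals, obtains a generic elementary embedding $j\colon V\rightarrow M$ with $M$ closed under $\omega$-sequences in the extension, identifies the reals of the extension with those of a L\'{e}vy collapse extension of $V$, and then runs Solovay's homogeneity argument to get the regularity properties of every set in $L(\mbbR)$ directly. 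That mechanism is the whole point here: the Matsubara--Usuba theorem quoted immediately afterwards shows that precipitousness of the tower below a weakly compact cardinal already suffices, a hypothesis far below the strength of $\mathrm{AD}^{L(\mbbR)}$, and the main theorem of the paper is a refinement in the same spirit. Your route proves the theorem, but it proves far more than is needed and bypasses the structure the paper is built on.
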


The universally Baireness is introduced in Feng-Magidor-Woodin \cite{feng1992universally}.
 This property is considered as a generalization of the Baire property,
 and implies the regularity properties (under some large cardinal axioms).

\begin{theorem}[Woodin \cite{neeman1998proper}]\label{univBaire}
 If $\lambda$ is a Woodin cardinal which is a limit of Woodin cardinals,
 then every set of reals in $L(\mbbR)$ is $\lambda$-universally Baire.
\end{theorem}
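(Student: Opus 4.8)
The plan is to obtain $\lambda$-universal Baireness of each $A\in L(\mbbR)$ from two ingredients, both of which the hypothesis delivers: a weakly homogeneous Suslin representation of $A$ and of its complement, coming from the Woodin cardinals cofinal in $\lambda$, together with Martin--Solovay trees that seal the complements in all small generic extensions, whose correctness is guaranteed by the Woodinness of $\lambda$ itself. Fix $A\in L(\mbbR)$ and write $x\in A\iff L(\mbbR)\models\varphi(x,z)$ for a formula $\varphi$ and a real parameter $z$; since $L(\mbbR)$ is closed under complements it is enough to treat $A$ and $\mbbR\setminus A$ on the same footing.

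First I would show that $A$ is $\kappa$-homogeneously Suslin for every $\kappa<\lambda$. Given $\kappa$, I would use that $\lambda$ is a limit of Woodin cardinals to place a block $\delta_0<\delta_1<\cdots$ of Woodin cardinals together with a measurable above it entirely inside the interval $(\kappa,\lambda)$, so that $\delta_0>\kappa$. Feeding this block into the homogeneous tree construction underlying Theorem \ref{regularity} yields a single tree $T$ with $p[T]=A$ carrying a system of $\delta_0$-complete, hence $\kappa$-complete, measures; in the stationary-tower formulation the measures are the images read off from a tower generic embedding $j\colon V\to M$ with $\mathrm{crit}(j)=\omega_1$ and $M^{<\delta_0}\cap V[G]\subseteq M$, and countable completeness and coherence of the system follow from the closure of $M$. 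A branch above a real $x$ codes $x$ together with an ordinal certificate for $\varphi(x,z)$ transported along such an embedding, and the elementarity of $j$ on $L(\mbbR)$ together with the fact that $j$ fixes the ground-model reals is what forces $p[T]=A$ rather than a larger set. Running the same construction for $\mbbR\setminus A$ gives a companion tree $U$, and since $\kappa<\lambda$ was arbitrary, $A$ and $\mbbR\setminus A$ are $<\lambda$-homogeneously, in particular $<\lambda$-weakly homogeneously, Suslin.

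Next I would invoke the Woodinness of $\lambda$. Fixing any $\kappa$ with $\omega<\kappa<\lambda$ and the $\kappa$-weakly homogeneous trees $T,U$ from the previous step, the Martin--Solovay construction carried out at the Woodin cardinal $\lambda$ produces trees $ms(T)$ and $ms(U)$ such that $\Vdash_{\mbbP}p[ms(T)]=\mbbR\setminus p[T]$ and $\Vdash_{\mbbP}p[ms(U)]=\mbbR\setminus p[U]$ for every poset $\mbbP$ of size $<\lambda$. The point is that, although the measures on $T$ and $U$ are only $\kappa$-complete for the fixed $\kappa$, the Woodinness of $\lambda$ makes the relevant antichains in the Martin--Solovay tree reflect below $\lambda$, so the complementation survives forcing of any size $<\lambda$. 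Setting $\mathcal{T}=ms(U)$ and $\mathcal{S}=ms(T)$ we get $p[\mathcal{T}]=A$ and $p[\mathcal{S}]=\mbbR\setminus A$ in $V$, and these projections remain complementary in every extension by a poset of size $<\lambda$; by definition $A$ is $\lambda$-universally Baire.

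The hard part will be the homogeneous-representation step above: producing, uniformly for $\kappa<\lambda$, genuinely homogeneous trees for arbitrary sets of $L(\mbbR)$ rather than merely projective ones. The delicate points are verifying the countable completeness and coherence of the measure system extracted from the stationary-tower embeddings, and ensuring that the ordinal certificates for $\varphi(x,z)$ are captured with enough absoluteness that the projection of $T$ computes exactly $A$. Once this is in place the Martin--Solovay step is a routine application at the Woodin cardinal $\lambda$; the only interface to watch is matching the homogeneity degree $\kappa$ delivered below $\lambda$ with the input demanded by the construction, which the limit-of-Woodins hypothesis makes available with room to spare.
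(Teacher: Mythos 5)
The statement you are proving is quoted in the paper as a known theorem of Woodin; the paper's own route to its conclusion is indirect. Section 4 (Theorem \ref{woodinimpliescc} and its corollary) shows that a Woodin cardinal $\lambda$ which is a limit of Woodin cardinals satisfies the hypotheses of Theorem \ref{maintheorem} (presaturation of the stationary towers $\mcalI^\kappa_\lambda$ and of unboundedly many $\mcalI^\kappa_\mu$), and the main theorem then produces the universally Baire representation by building a single pair of trees on $\omega\times V_\delta\times V_\lambda$ directly from ``good'' countable elementary substructures, with correctness under small collapses coming from the absoluteness lemma (Lemma \ref{absolute}), i.e.\ from generic ultrapowers by the stationary tower. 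Your proposal instead follows the classical homogeneous-tree plus Martin--Solovay route, which is a genuinely different decomposition; but as written it has a real gap.

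The gap is in your first step. The homogeneous tree construction ``underlying Theorem \ref{regularity}'' --- the Martin--Steel/Woodin propagation of homogeneity through a block of $\omega$ Woodin cardinals with a measurable above --- yields $\kappa$-homogeneously Suslin representations for the \emph{projective} sets (one Woodin cardinal per level of the projective hierarchy), not for arbitrary sets of reals in $L(\mbbR)$. To place all of $\mcalP(\mbbR)\cap L(\mbbR)$ into the $<\lambda$-homogeneously Suslin sets one must climb the full L\'{e}vy hierarchy of $L(\mbbR)$, and the known arguments for this (the derived model theorem, the tree production lemma, genericity iterations, or stationary-tower methods) are at least as hard as the theorem being proved; in particular, the ``ordinal certificate for $\varphi(x,z)$ transported along $j$'' device does not by itself produce a countably complete, coherent measure system on a tree projecting exactly to $A$ when $A$ is $\Sigma_n^{L(\mbbR)}$ for arbitrary $n$. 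You flag this as ``the hard part,'' but it is not merely hard: it is essentially the entire content of the theorem, and the block-of-Woodins input you cite is not known to suffice for it via the construction you invoke. A second, smaller issue: even granting $\kappa$-weak homogeneity for every $\kappa<\lambda$, the Martin--Solovay step hands you, for each $\kappa$, a pair of trees correct for posets of size less than $\kappa$; the definition of $\lambda$-universal Baireness requires a \emph{single} pair of trees correct for all posets of size less than $\lambda$, and amalgamating the family is not automatic. (Nor is the Woodinness of $\lambda$ what makes the Martin--Solovay tree correct after small forcing --- that comes from the completeness of the measures.) The paper's stationary-tower construction sidesteps both problems by building one pair of trees outright.
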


In the proof of Theorem \ref{regularity} and \ref{univBaire}, large cardinal properties of small uncountable cardinals play important roles.
We can prove Theorem \ref{regularity} using large cardinal properties of a tower consisting of non-stationary ideals $NS_{\omega_1,\alpha}$ on $\mcalP_{\omega_1}(V_\alpha)$ for ordinals $\alpha\geq \omega_1$.
This tower of ideals is called a stationary tower.
 About Theorem \ref{regularity}, Matsubara and Usuba investigated the conditions on the stationary tower which are sufficient to derive the consequence without explicit use of strong large cardinals, such as Woodin cardinals.

 \begin{theorem}[Matsubara-Usuba \cite{matsubara2018countable}]
     If $\lambda$ is a weakly compact cardinal and $\mcalI^{\omega_1}_\lambda=\langle NS_{\omega_1,\alpha}\mid \alpha\in[\omega_1,\lambda)\rangle$ is precipitous,
     then every set of reals in $L(\mbbR)$ has the regularity properties. 
 \end{theorem}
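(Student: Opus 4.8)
The plan is to read a generic elementary embedding off the precipitous tower, transport the parameter-free statement ``every set of reals is regular'' up to the tower extension by pure elementarity, and there run a Solovay-style argument in which the weak compactness of $\lambda$ does the real work.

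\emph{The embedding.} Forcing with $\mcalI^{\omega_1}_\lambda$ produces, by precipitousness, a wellfounded generic ultrapower $j\colon V\to M\subseteq V[G]$ with $\mrm{crit}(j)=\omega_1^V$. Since every condition is a stationary subset of some $\mcalP_{\omega_1}(V_\alpha)$, the model $M$ is closed under $\omega$-sequences in $V[G]$, so $\mbbR^M=\mbbR^{V[G]}$; moreover $j$ fixes $H_{\omega_1}^V$ pointwise, in particular every ground-model real. The usual computation for a tower reaching the inaccessible $\lambda$ gives $j(\omega_1^V)=\omega_1^{V[G]}$, an ordinal at least $\lambda$, so every $V$-cardinal below $\lambda$ — hence all of $\mbbR^V$, which has size $<\lambda$ — is countable in $V[G]$: the tower extension is itself a Solovay-type model in which the weakly compact $\lambda$ has become $\omega_1$. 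Restricting $j$ yields an elementary embedding $j\colon L(\mbbR)^V\to L(\mbbR)^{V[G]}$ that is the identity on $\mbbR^V$.

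\emph{The pull-back.} Let $\Psi$ be the sentence ``every set of reals has the Baire property, is Lebesgue measurable, and has the perfect set property,'' which carries no parameters. By elementarity of $j$ we have $L(\mbbR)^V\models\Psi$ if and only if $L(\mbbR)^{V[G]}\models\Psi$, so it suffices to establish $\Psi$ in the tower extension. Once $L(\mbbR)^V\models\Psi$ is known, each regularity property descends to $V$ itself: a witness (an open set, a Borel meager cover, or a perfect tree) is coded by a real of $L(\mbbR)^V\subseteq V$, membership ``$x\in A$'' is computed by the same $L(\mbbR)$-formula in $V$ and in $L(\mbbR)^V$, and assertions such as ``$A\mathbin{\triangle}U$ is meager'' or ``$[T]\subseteq A$'' are absolute between $L(\mbbR)^V$ and $V$, since the two share the same reals.

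\emph{Regularity in the tower extension.} It remains to verify $L(\mbbR)^{V[G]}\models\Psi$. Every $B\in L(\mbbR)^{V[G]}$ is definable from a real and ordinals, and in $V[G]$ the cardinal $\lambda=\omega_1^{V[G]}$ is the image of an inaccessible, so I would adapt Solovay's argument: show that each real of $V[G]$ is generic over a suitable inner model by a forcing of size below $\lambda$, and exploit homogeneity exactly as in the Levy-collapse case. This is where I expect the main obstacle to lie, since the tower forcing is far less homogeneous than $\colllambda$. It is precisely here that the weak compactness of $\lambda$ — not merely its inaccessibility — should be invoked, either to factor the tower so that $V[G]$ and a genuine collapse extension share the same reals (whereupon Solovay's theorem applies verbatim), or to supply directly the reflection and genericity-over-inner-models that the measurability, Baire, and perfect-set computations demand.
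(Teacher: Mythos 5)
This statement is quoted from Matsubara--Usuba and the paper gives no proof of it, so I can only judge your proposal on its own terms and against the machinery the paper builds for the analogous situations (Fact \ref{symext}, Corollary \ref{addedreal}, Lemma \ref{factoratmu}). Your overall architecture --- generic ultrapower, transfer of the parameter-free regularity statement by elementarity, descent from $L(\mbbR)^V$ to $V$ --- is the right shape. But there are two genuine gaps, and the second one is the entire content of the theorem.

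First, you assert that precipitousness makes $M$ closed under $\omega$-sequences, hence $\mbbR^M=\mbbR^{V[G]}$, ``since every condition is a stationary subset of some $\mcalP_{\omega_1}(V_\alpha)$.'' That is not an argument. For a tower of inaccessible height, capturing an $\omega$-sequence $\langle [f_n]_G\mid n<\omega\rangle$ inside $M$ requires a single condition to decide representing functions for all $n$ simultaneously, i.e.\ to catch countably many maximal antichains; precipitousness only gives wellfoundedness of the direct limit. This is exactly why the paper states closure under short sequences as a consequence of \emph{presaturation} (Fact \ref{wellfdd}), not of precipitousness. Second, and decisively, your final step is announced rather than carried out: you write that weak compactness ``should be invoked, either to factor the tower \dots\ or to supply directly the reflection and genericity-over-inner-models.'' That is precisely the theorem's content. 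What is needed is the analogue of Corollary \ref{addedreal}: one must show that $\lambda=\sup\{\omega_1^{V[r]}\mid r\in\mbbR^{V[G]}\}$ and that every real of $V[G]$ is $\mbbP$-generic over $V$ for some $\mbbP\in V_\lambda$, so that Fact \ref{symext} yields a $Coll(\omega,<\lambda)$-generic $g$ with $\mbbR^{V[G]}=\mbbR^{V[g]}$ and $L(\mbbR)^{V[G]}$ becomes a Solovay model. Getting each real generic over $V$ requires showing that the countably many maximal antichains deciding its digits restrict to predense sets in some $\mbbQ^{\omega_1}_\gamma$ with $\gamma<\lambda$ met by $G$ --- a catching/factorization statement in the spirit of Lemma \ref{factoratmu} --- and it is here that the $\Pi^1_1$-indescribability of the weakly compact $\lambda$ must reflect the predensity of the restricted antichains to some $\gamma<\lambda$. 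Until that lemma is proved, the argument has no engine; homogeneity of the tower, which you correctly flag as doubtful, cannot substitute for it.
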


 In this paper, we give a condition on the stationary towers to derive the consequence of Theorem \ref{univBaire}.
 We prove the following.

  \begin{theorem}\label{maintheorem}
  Let $\lambda$ be an inaccessible cardinal.
  Suppose that for unboundedly many successor cardinals $\kappa<\lambda$,
  $\mcalI^\kappa_\lambda=\langle NS_{\kappa,\alpha}\mid \alpha \in[\kappa,\lambda)\rangle$ is presaturated 
  and 
  there are unboundedly many inaccessible cardinals $\mu<\lambda$ such that $\mcalI^\kappa_\mu$ is presaturated.
  Then every set of reals in $L(\mbbR)$ is $\lambda$-universally Baire.
 \end{theorem}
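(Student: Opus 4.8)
The plan is to verify the Feng--Magidor--Woodin criterion directly: it suffices to show that every $A\subseteq\mbbR$ lying in $L(\mbbR)$ admits a pair of $\lambda$-absolutely complementing trees, i.e. trees $S,T$ of height $<\lambda$ with $p[S]=A$, $p[T]=\mbbR\setminus A$, and with $p[S]$ and $p[T]$ remaining complementary after forcing with any poset of size $<\lambda$. Fix such an $A$, defined in $L(\mbbR)$ by a formula $\varphi$ and a real parameter $z$. All the trees will be extracted from the generic elementary embeddings supplied by the presaturated towers in the hypothesis.

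First I would record the consequences of presaturation. If $\kappa<\lambda$ is a successor cardinal with $\mcalI^\kappa_\lambda$ presaturated and $G$ is generic, then the generic ultrapower $j\colon V\to M\subseteq V[G]$ is wellfounded, $\mathrm{crit}(j)=\kappa$ and $j(\kappa)=\lambda$, and, crucially, $M$ is closed under $\omega$-sequences in $V[G]$. Since $\omega<\kappa=\mathrm{crit}(j)$, the embedding fixes every ground-model real, and $\omega$-closure gives $\mbbR^{V[G]}=\mbbR^{M}$, hence $L(\mbbR)^{V[G]}=L(\mbbR)^{M}$. Applying $j$ and elementarity then yields, for every real $x\in V$, the equivalence $L(\mbbR)^V\models\varphi(x,z)$ iff $L(\mbbR)^{V[G]}\models\varphi(x,z)$. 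The same argument, run inside a small forcing extension (a poset of size $<\lambda$ can be absorbed below the tower because $\lambda$ is inaccessible and presaturation survives), gives two-step generic absoluteness: the theory of $L(\mbbR)$ with real parameters is invariant under all forcings of size $<\lambda$.

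The heart of the argument is the construction of the trees, for which the second hypothesis---unboundedly many inaccessible $\mu<\lambda$ with $\mcalI^\kappa_\mu$ presaturated---is indispensable. For each such $\mu$ we obtain an embedding $j_\mu\colon V\to M_\mu$ with $\mathrm{crit}(j_\mu)=\kappa$, $j_\mu(\kappa)=\mu$, and $M_\mu$ closed under $\omega$-sequences, exactly as above but with target below $\lambda$. I would define a tree $S$ of height $<\lambda$ so that a branch over a real $x$ encodes $x$ together with a sufficiently generic tower of conditions whose associated (wellfounded) ultrapower certifies $L(\mbbR)\models\varphi(x,z)$; the complementary tree $T$ is built dually to certify $L(\mbbR)\models\neg\varphi(x,z)$. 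The unboundedly many $\mu$ furnish, for any prescribed size of small forcing, an embedding $j_\mu$ whose target $\mu$ exceeds that size, so the relevant certification can be carried out cofinally below $\lambda$; presaturation of the full tower $\mcalI^\kappa_\lambda$ then assembles these local certifications into trees of height $<\lambda$. That $p[S]=A$ and $p[T]=\mbbR\setminus A$ in $V$ follows from wellfoundedness of the generic ultrapowers (a branch exists exactly when the certifying tower is wellfounded), and that $S,T$ remain complementing after any forcing of size $<\lambda$ follows because such an extension admits the very same embedding analysis, by the absoluteness established above.

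I expect the main obstacle to be the persistence of complementation, namely that $p[S]\cup p[T]=\mbbR$ continues to hold in every extension $V[g]$ with $|g|<\lambda$. This amounts to showing that for each real $x$ appearing in such an extension, at least one of the two certifications succeeds, i.e. one of the attempted towers of models is genuinely wellfounded. Here the interplay of the two hypotheses is essential: generic absoluteness forces $L(\mbbR)$ to decide $\varphi(x,z)$ one way, and the cofinally many presaturated towers $\mcalI^\kappa_\mu$ reflect that decision into an honestly wellfounded tower feeding the appropriate tree. The delicate points are verifying that the generic ultrapowers stay wellfounded along the relevant towers---this is precisely where presaturation, rather than mere precipitousness, is needed, since it is $\omega$-closure that keeps $\mbbR^{M}=\mbbR^{V[G]}$ and keeps the towers of models wellfounded---and checking that a small forcing followed by the tower can be rearranged so that the absoluteness and the embeddings survive. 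Once these are in hand, the Feng--Magidor--Woodin criterion applies and yields that $A$ is $\lambda$-universally Baire.
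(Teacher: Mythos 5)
Your overall shape (build trees from the generic embeddings, establish generic absoluteness for $L(\mbbR)$, then verify complementation in small extensions) matches the paper's strategy, but two essential steps are missing or misdirected. First, the two-step absoluteness you assert --- that the theory of $L(\mbbR)$ with real parameters is invariant under all forcings of size $<\lambda$ --- does not follow from ``absorbing the poset below the tower.'' Elementarity gives $L(\mbbR)^V\equiv L(\mbbR)^{V[G]}$ and (granting preservation of presaturation, itself a nontrivial duality argument) $L(\mbbR)^{V[g]}\equiv L(\mbbR)^{V[g][H]}$, where $H$ is generic for the tower \emph{generated in} $V[g]$; but you still must compare $L(\mbbR)^{V[G]}$ with $L(\mbbR)^{V[g][H]}$, and these are extensions by different forcings. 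The paper's Lemma \ref{absolute} does exactly this by showing that $\mbbR^{V[G]}$ and $\mbbR^{V[g][H]}$ are each the reals of a $Coll(\omega,<\lambda)$-extension (Fact \ref{symext} plus Corollary \ref{addedreal}) and then invoking weak homogeneity. This is precisely where the second hypothesis enters: presaturation of $\mcalI^\kappa_\mu$ for unboundedly many $\mu$ yields factorization of the tower at those $\mu$ (Theorem \ref{imply}), which is what makes every real of the tower extension generic for a small forcing. Your proposal instead uses the $\mcalI^\kappa_\mu$ to produce embeddings with target $\mu$ for ``local certification,'' which is not their role and does not supply the missing comparison.

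Second, the tree construction is not actually given, and the idea sketched --- a branch encoding ``a sufficiently generic tower of conditions whose associated wellfounded ultrapower certifies $\varphi$'' --- cannot work as stated: a branch is a countable object, a generic filter for $\mbbQ^\kappa_\lambda$ is not, and wellfoundedness of a generic ultrapower is not witnessed by any countable fragment of the generic. The workable construction (the paper's, following Larson) has branches encode a real $r$, a countable $v\prec V_\delta$ closed under a fixed function, and a descending sequence of conditions of $\collkappa$ generating a filter generic over the transitive collapse $\bar v$, with the top condition deciding $\phi$. Correctness of the projections then rests entirely on showing that the ``good'' $v$ --- those for which truth of $\phi$ in $\bar v[g]$ reflects truth in $V$ --- contain a club. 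That claim is the main combinatorial content of the proof; it is established by a stationarity argument in $\mbbQ^{\omega_1}_\lambda$ combined with the absoluteness lemma, and nothing in your proposal plays this role. Without the club of good models (or some substitute), you have no way to prove $p[S]\subseteq A$ and $p[T]\subseteq\mbbR\setminus A$ in $V$, let alone after small forcing.
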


This paper is organized as follows.
 
 In section 2,
 we do preliminaries about forcings and universally Baire sets.

 In section 3,
 we introduce a property of tower forcings, the factorization, and show its relation with the presaturation.
 The factorization of towers is crucial to analyze the reals added by forcings according to its tower.

 In section 4,
 we show that if the height of a tower is a Woodin cardinal then its tower is presaturated.
 In particular, the assumption of Theorem \ref{univBaire} implies the assumption of Theorem \ref{maintheorem}.

 In section 5, 
 we see the presaturation and factorization under L\'{e}vy collapses.
 This plays a significant role in the proof of the main lemma, Lemma \ref{absolute}

 In section 6, 
 we prove the main lemma \ref{absolute} and the main theorem \ref{maintheorem}.

\section{Preliminaries}
In this section, we present some notation and basic facts. For those which are not mentioned here,
consult Jech\cite{jech2003set}, Kunen\cite{kunen2014set} or Kanamori\cite{kanamori2008higher}.
In this paper, we identify $\mbbR$ with $\omega^\omega$.

Let $X$ be a non-empty set and $I\subseteq \mcalP(X)$ be an ideal on $X$.
$I$ is said to be \textit{fine} if $\{a\in X\mid i\notin a\}\in I$ for all $i\in\bigcup X$. $I$ is said to be \textit{normal} if for all function $f\colon X\rightarrow\bigcup X$ such that $\{a\in X\mid f(a)\in a\}\notin I$ there is $b\in\bigcup X$ such that 
    $\{a\in X \mid f(a)=b\}\notin I$.
Unless explicitly stated, we shall restrict our attention to fine and normal ideals.
We let $I^+=\{A\subseteq X\mid A\notin I\}$, the set of $I$-positive sets.

We introduce some notations about preorders and forcings.
Let $\mbbP$, $\mbbQ$ be preorders.
For $p, q\in\mbbP$, $p \parallel q$ means that $p$ and $q$ are compatible, that is, there is $r\in\mbbP$ such that $r\leq_\mbbP p, q$.

For a complete Boolean algebra $\mbbB$ we define $\mbbB^+ =\mbbB\setminus \{0\}$.
For every preorder $\mbbP$ there exists a complete Boolean algebra $\mbbB$ and a dense embedding $i\colon \mbbP\rightarrow\mbbB^+$.
Such $\mbbB$ is called the completion of $\mbbP$.

We denote $\mbbP\simeq\mbbQ$ if the completions of $\mbbP$ and $\mbbQ$ are isomorphic as Boolean algebras.
Then we say $\mbbP$ and $\mbbQ$ are forcing equivalent.

         For preorder $\mbbP$ and $\mathcal{D}\subseteq \mbbP$ we let $\mathcal{D}\downarrow =\{p\in\mbbP\mid \exists q\in\mathcal{D}(p\leq_\mbbP q)\}$.

We recall the definition and basic properties of the L\'{e}vy collapse.

\begin{definition}
 Let $\alpha< \beta$ be ordinals.
     $Coll(\alpha,\beta)$ is the poset of all partial functions $p$ from $\alpha$ to $\beta$ such that $|p|<\alpha$, ordered by reverse inclusions.
     Similarly, $Coll(\alpha, <\beta)$ is the poset of all partial functions $p$ from $\alpha\times\beta$ to $\beta$ such that $|p|<|\alpha|$ and $p(\gamma,\delta)\in\delta$ for all $(\gamma,\delta)\in dom(p)$,
     ordered by reverse inclusions.
     For $\alpha<\beta<\gamma$, 
     \begin{center}
     $Coll(\alpha,[\beta,\gamma))=\{p\in Coll(\alpha,<\gamma)\mid \mathrm{dom}(p)\subseteq \alpha\times[\beta,\gamma)\}$.
     \end{center}
\end{definition}

     The proof of Fact \ref{absorb} and Fact \ref{collapsefactor} can be found in the appendix of Larson \cite{larson}. 
     \begin{fact}\label{absorb}
         If $\mbbP$ is a poset such that $\Vdash_\mbbP |\mbbP|=\aleph_0$,
         then $\mbbP$ has a dense set isomorphic to $Coll(\omega,|\mbbP|).$
         In particular, 
         $\mbbP\times \collkappa \simeq \collkappa$ for $\kappa >|\mbbP|$.
     \end{fact}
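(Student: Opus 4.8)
The plan is to realize, inside $\mbbP$, the canonical tree that is dense in the collapse. Set $\kappa=|\mbbP|$ and, replacing $\mbbP$ by its separative quotient (forcing equivalent, with the same completion), assume $\mbbP$ is separative. The tree $\kappa^{<\omega}$ of finite sequences from $\kappa$, ordered by reverse end-extension, is itself a dense subset of $Coll(\omega,\kappa)$, so it suffices to construct a dense $D\subseteq\mbbP$ order-isomorphic to $\kappa^{<\omega}$; this yields $\mbbP\simeq Coll(\omega,\kappa)$, which is the first assertion. First note that $\mbbP$ is atomless: below an atom the generic filter is determined and no $V$-cardinal is collapsed, contradicting $\Vdash_\mbbP|\check\kappa|=\aleph_0$ (the degenerate case $\kappa=\aleph_0$ being handled directly by atomlessness below).

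The combinatorial input I need is that below every $p\in\mbbP$ there is a maximal antichain of size exactly $\kappa$. The upper bound $\leq\kappa$ is immediate from $|\mbbP|=\kappa$. For the lower bound, when $\kappa>\aleph_0$ I fix a name $\dot{f}$ with $\Vdash_\mbbP$ ``$\dot{f}\colon\omega\to\check\kappa$ is surjective''; for each $\alpha<\kappa$ I choose $q_\alpha\leq p$ and $n_\alpha<\omega$ with $q_\alpha\Vdash\dot{f}(\check{n}_\alpha)=\check\alpha$, and by the pigeonhole principle I fix $n$ with $|\{\alpha:n_\alpha=n\}|=\kappa$; the corresponding $q_\alpha$ are pairwise incompatible, giving an antichain of size $\kappa$ below $p$ (when $\kappa=\aleph_0$ atomlessness already yields such an antichain). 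Extending to a maximal antichain and splitting one piece when necessary, one sees moreover that below $p$ there is a maximal antichain of size exactly $\kappa$ containing any prescribed $q_0\leq p$.

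With this in hand I build $\langle p_s:s\in\kappa^{<\omega}\rangle$ by recursion on length: put $p_\emptyset=1_\mbbP$, and given $p_s$ let $\{p_{s^\frown\alpha}:\alpha<\kappa\}$ bijectively enumerate a maximal antichain of size $\kappa$ below $p_s$. Then $s\subseteq t$ implies $p_t\leq p_s$; two incomparable nodes split at some level into distinct members of one antichain and so map to incompatible conditions; and the map is injective, so $s\mapsto p_s$ is an order-isomorphism of $\kappa^{<\omega}$ onto $D=\{p_s:s\in\kappa^{<\omega}\}$. The one remaining, and main, obstacle is that $D$ is dense in $\mbbP$: this does \emph{not} follow from maximality alone, since a maximal antichain below $p_s$ only makes conditions compatible with (predense below) its members, whereas density demands that every $q\in\mbbP$ have some $p_t\leq q$, i.e.\ be genuinely refined at a node. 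I would secure this by interleaving a bookkeeping into the recursion. Enumerating $\mbbP=\langle d_\alpha:\alpha<\kappa\rangle$, I arrange the construction so that for every node $s$ and every $\alpha$ with $d_\alpha\parallel p_s$ there is a finite extension $t\supseteq s$ with $p_t\leq d_\alpha$; applying this at $s=\emptyset$ shows every $q=d_\alpha$ lies above some node, which is exactly density. To meet a task $(s,\alpha)$ with $d_\alpha\parallel p_s$ I pick $u\leq p_s,d_\alpha$; since each level's antichain is predense below the relevant node, some branch below $s$ stays compatible with $u$, yielding arbitrarily deep $t\supseteq s$ with $p_t\parallel d_\alpha$, and at the first such $t$ not already assigned a task I choose the maximal antichain below $p_t$ so that one member lies below $d_\alpha$ (possible by the last sentence above). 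As the tree below any node has $\kappa$ branches and there are $\kappa$ tasks, a routine scheduling assigns every task to a distinct node; this bookkeeping is where I expect the only real care to be needed.

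Finally, for the ``in particular'', let $\kappa>|\mbbP|$. By the first part $\mbbP\simeq Coll(\omega,|\mbbP|)$, so $\mbbP\times\collkappa\simeq Coll(\omega,|\mbbP|)\times\collkappa$. Since $|\mbbP|<\kappa$, the factor $Coll(\omega,|\mbbP|)$ is absorbed by the standard absorption property of the L\'evy collapse, $Coll(\omega,\mu)\times\collkappa\simeq\collkappa$ for $\mu<\kappa$ (a reindexing of the finite-support product defining $\collkappa$). Hence $\mbbP\times\collkappa\simeq\collkappa$.
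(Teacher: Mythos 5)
The paper offers no proof of this Fact (it is quoted from the appendix of Larson), so your argument has to stand on its own. Its overall shape --- realize the tree $\kappa^{<\omega}$, which is dense in $Coll(\omega,\kappa)$, as a dense subtree of (the separative quotient of) $\mbbP$ by choosing a size-$\kappa$ maximal antichain below each node --- is the standard one, but the step you yourself single out, density of $D=\{p_s\}$, is a genuine gap, and the scheduling you propose is not routine. The obstruction: a single node $p_s$ may be, at its level, the only node compatible with $\kappa$ many of the targets $d_\alpha$, and you cannot serve $\kappa$ many targets at one node (the conditions $q_\alpha\le p_s,d_\alpha$ you would add to the antichain below $p_s$ must be pairwise incompatible, which can fail, e.g.\ when some $d_{\alpha'}\wedge p_s$ lies below an already chosen $q_\alpha$); so pending targets must be queued down the branch, and with $\kappa$ uncountable but only $\omega$ levels a queue of order type $\ge\omega$ need never empty --- an adversarial enumeration of $\mbbP$ defeats the greedy ``first unassigned node'' rule. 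The standard cure dispenses with scheduling entirely: use the hypothesis once more to fix a name $\dot e$ with $\Vdash_\mbbP$ ``$\dot e\colon\omega\to\check{\mbbP}$ enumerates $\dot{G}$'', and build the tree so that each $p_t$ decides $\dot e\restrict |t|$. Then for any $q$ some $r\le q$ forces $\dot e(n)=\check q$; $r$ is compatible with some level-$(n+1)$ node $p_t$, so $p_t\Vdash \check q\in\dot{G}$, and separativity gives $p_t\le q$. Density is automatic.

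Two further problems. First, your pigeonhole argument for a size-$\kappa$ antichain below every $p$ fails when $\mathrm{cf}(\kappa)=\omega$: the sets $\{\alpha : n_\alpha=n\}$ may all have size $<\kappa$, merely cofinal in $\kappa$; you need Erd\H{o}s--Tarski, or the observation that a uniform bound $\mu<\kappa$ on antichain sizes below $p$ gives the $\mu^+$-c.c.\ and hence preservation of $\kappa$, contradicting the collapse. Second, your derivation of the ``in particular'' applies the first part to $\mbbP$ itself, which presupposes $\Vdash_\mbbP|\mbbP|=\aleph_0$; but the paper invokes this clause (in the lemma characterizing $\lambda$-universal Baireness) for an \emph{arbitrary} poset $\mbbP$ of size $<\kappa$, which need not collapse its own cardinality. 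The correct route is to apply the first part to $\mbbP\times Coll(\omega,|\mbbP|)$, which does force its own cardinality to be countable, and then absorb the residual $Coll(\omega,|\mbbP|)$ factor into $\collkappa$ exactly as you do in your last step. (Relatedly, the first clause read literally fails for a countably infinite atomic $\mbbP$, which vacuously forces $|\mbbP|=\aleph_0$; your atomlessness argument only applies when $|\mbbP|$ is uncountable, so the degenerate case is not ``handled directly'' as you claim --- though this is arguably a defect of the statement rather than of your proof.)
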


     \begin{fact}\label{collapsefactor}
         Let $\lambda$ be an inaccessible cardinal,
         $\mbbP\in V_\lambda$ be a poset, and
         $G$ be a $Coll(\omega,<\lambda)$-generic filter over $V$.
         Suppose that $g\in V[G]$ is a $\mbbP$-generic filter over $V$.
         Then, there is $Coll(\omega,<\lambda)$-generic filter $H \in V[G]$ over $V[g]$ such that $V[G]=V[g][H]$.
     \end{fact}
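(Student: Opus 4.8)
The plan is to realize $V[G]$ as a generic extension of $V[g]$ by the quotient of the L\'evy collapse determined by $g$, and then to identify that quotient with $\colllambda$ itself. First I would record that, since $g$ is $\mbbP$-generic over $V$ and $g\in V[G]$, there is in $V$ a complete embedding $i$ of the completion of $\mbbP$ into the completion $\mbbB$ of $\colllambda$: fixing a name $\dot g$ for $g$ and a condition of $G$ forcing $\dot g$ to be $V$-generic, set $i(p)=\bool{\check p\in\dot g}$, and check that $g=\{p\in\mbbP\mid i(p)\in G\}$. By the standard theory of quotient forcing, $V[G]$ is then generic over $V[g]$ for the quotient algebra $\mbbB/i[g]$, and there is $H\in V[G]$ witnessing $V[G]=V[g][H]$. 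Since $\mbbP\in V_\lambda$ and $\lambda$ is inaccessible, forcing with $\mbbP$ preserves the inaccessibility of $\lambda$, and the poset $\colllambda$ is computed identically in $V$ and $V[g]$; hence it suffices to show that $\mbbB/i[g]$ is forcing equivalent, in $V[g]$, to $\colllambda$, for then transporting $H$ across such an equivalence yields the desired $\colllambda$-generic filter over $V[g]$.

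Next I would set up the absorption that makes this plausible, using Fact~\ref{absorb}. Writing $\gamma=\max(|\mbbP|,\aleph_0)$, the product $\mbbP\times Coll(\omega,\gamma)$ forces its own cardinality to be $\aleph_0$ (the second factor collapses $\gamma$), so by Fact~\ref{absorb} it is forcing equivalent to $Coll(\omega,\gamma)$; and by the ``in particular'' clause of Fact~\ref{absorb}, $Coll(\omega,\gamma)\times\colllambda\simeq\colllambda$ because $\lambda>\gamma$. Combining, $\mbbP\times\colllambda\simeq\colllambda$, which is the global form of the equivalence I want to establish pointwise at $g$. Moreover, as $\colllambda$ is $\lambda$-cc and a nice name for the bounded object $g\subseteq\mbbP$ uses fewer than $\lambda$ antichains, each of size $<\lambda$ and hence with ordinals bounded below the regular $\lambda$, the embedding $i$ factors through the completion $\mbbB_\kappa$ of $\collkappa$ for some $\kappa<\lambda$ with $\mbbP\in V_\kappa$. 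This splits the quotient as $(\mbbB_\kappa/i[g])\times Coll(\omega,[\kappa,\lambda))$ and in particular shows it has a dense subset of size $\le\lambda$.

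Finally, and this is the heart of the matter, I would identify $\mbbB/i[g]$ with $\colllambda$ in $V[g]$ by the characterization of the L\'evy collapse. In $V[g]$ the quotient is $\lambda$-cc (antichains lift to antichains of $\mbbB$), has a dense set of size $\le\lambda$, collapses every $\alpha<\lambda$ to $\omega$ (because $\colllambda$ does so in $V[G]=V[g][H]$), and preserves the inaccessible cardinal $\lambda$, which therefore becomes $\omega_1$ of $V[G]$. By the standard characterization---a weakly homogeneous, $\lambda$-cc forcing of size $\lambda$ that collapses every cardinal below the inaccessible $\lambda$ to $\omega$ while preserving $\lambda$ is forcing equivalent to $\colllambda$---one concludes $\mbbB/i[g]\simeq\colllambda$. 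I expect the main obstacle to be exactly the verification that the quotient meets this characterization, and above all its \emph{weak homogeneity}: this does not follow formally from the homogeneity of $\colllambda$, since quotients by a complete subalgebra need not inherit it, and must instead be extracted from the automorphism group of the L\'evy collapse together with the absorption $\mbbP\times\colllambda\simeq\colllambda$ of Fact~\ref{absorb}. Once this is in hand, transporting $H$ across the equivalence produces a $\colllambda$-generic filter $H'\in V[G]$ over $V[g]$ with $V[g][H']=V[G]$, as required.
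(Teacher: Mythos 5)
The paper does not prove this Fact at all: it is quoted from the appendix of Larson \cite{larson}, where the argument is a pure absorption argument. One finds $\kappa<\lambda$ with $g\in V[G\cap \collkappa]$, realizes $V[G\cap\collkappa]$ as a quotient extension of $V[g]$ by a poset of size less than $\lambda$, and then folds that small quotient into the tail $Coll(\omega,[\kappa,\lambda))$ using Fact~\ref{absorb} (matching cardinalities block by block, and absorbing $Coll(\omega,<\kappa)$ into $Coll(\omega,[\kappa,\lambda))$ the same way). Your first two paragraphs set up exactly this decomposition, and correctly: the complete embedding $i(p)=\bool{\check p\in\dot g}$ below a condition forcing genericity of $\dot g$, the factoring of $i$ through $\mbbB_\kappa$ via the $\lambda$-c.c.\ and the regularity of $\lambda$, and the resulting splitting of the quotient as $(\mbbB_\kappa/i[g])\times Coll(\omega,[\kappa,\lambda))$.

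The gap is in your final paragraph, and you flag it yourself: you route the conclusion through a characterization of $\colllambda$ that has weak homogeneity among its hypotheses, and you concede that you do not see how to verify weak homogeneity of the quotient. As written the proof is therefore unfinished at its decisive step. The repair is to abandon the characterization altogether, because from the splitting you already have, only Fact~\ref{absorb} is needed: in $V[g]$ the cardinal $\lambda$ is still inaccessible, $Q=\mbbB_\kappa/i[g]$ has some size $\nu<\lambda$, and for any $\delta\in(\nu,\lambda)$ both $Q\times Coll(\omega,[\kappa,\delta])$ and $Coll(\omega,<\kappa)\times Coll(\omega,[\kappa,\delta])$ force themselves countable and have cardinality $|\delta|$, so each is forcing equivalent to $Coll(\omega,|\delta|)$ and hence to $Coll(\omega,[\kappa,\delta])$; crossing with the common tail $Coll(\omega,(\delta,\lambda))$ gives $Q\times Coll(\omega,[\kappa,\lambda))\simeq Coll(\omega,[\kappa,\lambda))\simeq\colllambda$ over $V[g]$, and transporting $G_{[\kappa,\lambda)}$ together with the quotient generic across these equivalences yields $H$. (A minor further inaccuracy: ``antichains lift to antichains of $\mbbB$'' is not a correct justification that the quotient is $\lambda$-c.c., since quotient-incompatible elements can be compatible in $\mbbB$; one argues instead through $\mbbB\simeq\mbbB_0\ast(\mbbB/\dot G_0)$. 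But in the repaired argument this is not needed.)
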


We give the definition of the universally Baire sets and see basic properties.

\begin{definition}
    Let $\lambda$ be an infinite cardinal.
    $A\subseteq \mbbR$ is $\lambda$-universally Baire if there are sets $X$ and $Y$ and trees $S$ on $\omega\times X$ and $T$ on $\omega\times Y$ such that 
    \begin{itemize}
        \item $p[S]=A$ and $p[T]=\mbbR \setminus A$,
        \item $\Vdash_{\mbbP} `` p[S]\cup p[T]=\mbbR"$ for all poset $\mbbP$ such that $|\mbbP|<\lambda$.
    \end{itemize}
    $A$ is universally Baire if $A$ is $\lambda$-universally Baire for all cardinal $\lambda$.
\end{definition}
The universally Baireness was introduced by Feng-Magidor-Woodin \cite{feng1992universally} as a generalized Baire property and derives regularity properties.

\begin{fact}[Feng-Magidor-Woodin \cite{feng1992universally}]
Let $A\subseteq \mbbR$. The following are equivalent.
\begin{enumerate}
    \item  $A$ is universally Baire.
    \item  $f^{-1}[A]$ has the Baire property for all topological space $X$ and continuous function $f \colon X \rightarrow \mbbR$.
\end{enumerate}
\end{fact}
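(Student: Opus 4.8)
The plan is to prove the two implications separately, using the regular open algebra $RO(X)$ of a space $X$ as the bridge between the topological side (the Baire property) and the forcing side (trees and universal Baireness). Recall that, by the definition above, $A$ is universally Baire exactly when for every cardinal $\lambda$ there are trees $S,T$ with $p[S]=A$ and $p[T]=\mbbR\setminus A$ such that $\Vdash_\mbbP p[S]\cup p[T]=\mbbR$ for every poset $\mbbP$ with $|\mbbP|<\lambda$. The structural fact I will exploit is that the regular open sets of any space $X$ form a complete Boolean algebra $\mbbB=RO(X)$, that every complete Boolean algebra arises as $RO$ of its Stone space, and that forcing with $RO(X)$ adds a generic point $\dot x$ for which a property holds of $\dot x$ generically if and only if it holds of comeagerly many actual points of $X$ --- the usual dictionary between genericity and Baire category.

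For the implication $(1)\Rightarrow(2)$, fix trees $(S,T)$ witnessing universal Baireness, an arbitrary space $X$, and a continuous $f\colon X\to\mbbR$. Set $\mbbB=RO(X)$ and force with it; since $A$ is $|\mbbB|^{+}$-universally Baire, $p[S]$ and $p[T]$ are genuinely complementary in $V^{\mbbB}$. Continuity of $f$ turns it into a $\mbbB$-name $\dot r$ for a real, as the rational intervals pull back to regular open conditions. I then set $V=\bool{\dot r\in p[\check S]}\in RO(X)$ and claim that $f^{-1}[A]\mathbin{\triangle} V$ is meager, which yields the Baire property of $f^{-1}[A]$. The content of the claim is that for comeagerly many $x\in X$ one has $f(x)\in p[S]$ precisely when $x\in V$; the cleanest route is to run the unfolded Banach--Mazur game on $X$ for the target set $f^{-1}[A]$, letting the two players use branches of $S$ and of $T$ to build their respective witnesses. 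Absolute complementation guarantees that at each stage exactly one side can continue legally, so the game is determined and its value is encoded by $V$.

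For $(2)\Rightarrow(1)$, fix a cardinal $\lambda$ and a poset $\mbbP$ with $|\mbbP|<\lambda$, and let $X=St(RO(\mbbP))$ be its Stone space, a compact Hausdorff space with $RO(X)\cong RO(\mbbP)$. Any $\mbbP$-name $\dot r$ for a real is represented by a continuous map $f\colon X\to\mbbR$, so by hypothesis $f^{-1}[A]$ has the Baire property and hence agrees modulo meager with a unique $V\in RO(\mbbP)$. Reading $V$ as a Boolean value shows that the interpretation of $A$ in $V^{\mbbP}$ does not depend on the generic filter; carrying this out uniformly over all $\mbbP$ of size $<\lambda$ shows that $A$ has a forcing-absolute interpretation under all such forcing. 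It remains to manufacture the complementing trees $(S,T)$ from this absoluteness: one builds $T$ as a tree of attempts to force, over the collapse $\colllambda$, that the real coded by a node lies outside $A$, with ordinal labels recording the supporting conditions, and dually for $S$; the absoluteness just established is exactly what keeps $p[S]$ and $p[T]$ complementary after passing to any small extension.

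The main obstacle is this final tree construction in $(2)\Rightarrow(1)$: turning the purely local, mod-meager data handed over by the Baire property into a single tree on $\omega\times\mathrm{Ord}$ whose projection is absolutely complemented requires a genuine Suslin representation, and the auxiliary collapse together with the bookkeeping of ordinal labels must be arranged so that complementation persists in every forcing of size $<\lambda$. By contrast, the game-theoretic argument in $(1)\Rightarrow(2)$, though it is the conceptual core of the equivalence, is the more routine half once the genericity/comeager dictionary and the determinacy of the unfolded Banach--Mazur game are available.
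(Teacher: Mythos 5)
The paper itself offers no proof of this Fact (it is quoted from Feng--Magidor--Woodin), so your attempt can only be measured against the standard argument, and against that standard there are two genuine gaps. First, in $(1)\Rightarrow(2)$ your appeal to ``the determinacy of the unfolded Banach--Mazur game'' is not a proof as written. Determinacy of the Banach--Mazur game with payoff $f^{-1}[A]$ is \emph{equivalent} to the Baire property of (a localized version of) $f^{-1}[A]$, which is what you are trying to prove, so you must instead verify that the \emph{unfolded} game has a closed (Gale--Stewart) payoff and that Oxtoby's characterizations of meagerness via this game are valid in an arbitrary topological space $X$, where there is no countable base and plays need not converge to a point; none of this is checked, and your phrase ``exactly one side can continue legally'' does not substitute for it. The game detour is also unnecessary: the standard route sets $b_{s,u}=\bool{(\check{s},\check{u})\subset(\dot{r},\dot{h})}_{RO(X)}$ for $(s,u)\in S$, where $\dot{h}$ names the $S$-witness below $b=\bool{\dot{r}\in p[\check{S}]}$, observes that the level-$n$ values are pairwise disjoint with supremum $b$, so that off a meager set every $x\in b$ determines a coherent branch of $S$ with first coordinate $f(x)$, and argues symmetrically with $T$ below $\lnot b$ (using that $p[S]\cap p[T]=\emptyset$ persists by wellfoundedness of the product tree). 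This is a direct density computation needing no determinacy at all. A smaller but real error in the same spirit: rational intervals pull back under $f$ to open sets, not regular open sets, so $\dot{r}$ is defined only after taking regular-open hulls, with meager error sets that must be collected.

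Second, and more seriously, your $(2)\Rightarrow(1)$ is not carried out: you yourself flag the construction of the trees $S,T$ from the mod-meager data as ``the main obstacle'' and then stop. That construction is the content of this direction --- one must build, typically over a single absorbing forcing such as $Coll(\omega,\lambda)$ (compare Fact \ref{absorb} and the lemma following it in this paper, which is exactly why small posets can be replaced by L\'{e}vy collapses), trees whose nodes carry a name, a finite descending sequence of conditions deciding the real and lying below the regular-open value of $A$ (respectively its complement), and then verify $p[S]=A$, $p[T]=\mbbR\setminus A$, and that complementation persists in every extension by a poset of size $<\lambda$. Nothing in your sketch supplies the uniformity over all such posets or the verification of absolute complementation. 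There is also a false claim en route: a $\mbbP$-name for a real is \emph{not} represented by a total continuous function on the Stone space of $RO(\mbbP)$, only by a continuous function on a comeager subspace; this is fixable (apply hypothesis (2) to the comeager subspace, whose meager complement is harmless), but as stated the step fails. In sum, the first half can be repaired and indeed simplified, while the second half is an acknowledged placeholder rather than a proof.
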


\begin{fact}[Feng-Magidor-Woodin \cite{feng1992universally}]
Let $A\subseteq \mbbR$.
\begin{enumerate}
    \item If $A$ is universally Baire, then $A$ has the Baire property and is Lebesgue measurable.
    \item If $\lambda$ is a Woodin cardinal and $A$ is $\lambda$-universally Baire, then $A$ has the perfect set property.
\end{enumerate}
\end{fact}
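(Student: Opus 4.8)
The statement is two logically independent claims, and I would treat them separately; part (1) turns out to be essentially a corollary of the equivalence Fact proved just above, while part (2) is the genuinely hard part, where the hypothesis that $\lambda$ is Woodin is indispensable.

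\emph{Baire property.} This is immediate. Apply clause (2) of the preceding equivalence Fact with the topological space $X=\mbbR$ in its usual topology and $f\colon\mbbR\to\mbbR$ the identity map, which is continuous. Then $f^{-1}[A]=A$ has the Baire property.

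\emph{Lebesgue measurability.} The cleanest route reuses the same equivalence Fact together with a classical characterization of measurability: a set of reals is Lebesgue measurable exactly when it has the Baire property with respect to the density topology $\tau_d$ on $\mbbR$ (under $\tau_d$ the meager ideal coincides with the null ideal, and every $\tau_d$-open set is measurable). Since every Euclidean-open set is $\tau_d$-open, the identity map $(\mbbR,\tau_d)\to(\mbbR,\text{usual})$ is continuous, so clause (2) of the equivalence Fact applied to this map shows that $A=f^{-1}[A]$ has the Baire property in $\tau_d$, i.e. $A$ is Lebesgue measurable. If one prefers a self-contained forcing argument in the spirit of the paper, let $\mbbB$ be the random (measure) algebra, of size $\mathfrak{c}<\lambda$ (available since $A$ is $\lambda$-universally Baire for all $\lambda$), and let $\dot r$ name its generic real. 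Fixing trees $S,T$ witnessing universal Baireness, the Boolean value $\bool{\dot r\in p[\check S]}$ is a Borel set $B$ modulo null, and absolute complementation in $V^{\mbbB}$ forces $B$ and its complement to partition $\mbbR$ modulo null, so $B$ is measurable; the remaining point is to verify that $A\,\triangle\,B$ is null, which one does by a measurable-hull and density argument inside the measure algebra, using the absoluteness of the tree projections. I expect this step to be routine, so I would present the density-topology argument as the primary proof.

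\emph{Perfect set property.} This is the main obstacle, and it is where the Woodin cardinal enters essentially; one cannot argue from Suslinness alone. An arbitrary tree $S$ on $\omega\times X$ with $p[S]$ uncountable need not admit a real-splitting subtree, because uncountably many branches can spread across $|X|$-many immediate extensions each of which projects to a countable set; indeed, under $V=L$ the uncountable thin $\Sigma^1_2$ set is $\aleph_1$-Suslin and has no perfect subset, so the classical Cantor--Bendixson pruning simply fails for general $X$. The plan is therefore to upgrade the tree representation: by the Martin--Steel--Woodin theory, a $\lambda$-universally Baire set with $\lambda$ Woodin is $<\!\lambda$-homogeneously Suslin, so $A=p[R]$ for a tree $R$ carrying a system of countably complete measures. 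Granting this, I would finish in one of two standard ways. First, homogeneously Suslin sets are determined (Martin), and determinacy of the associated perfect-set game (Davis) yields that $A$ is countable or contains a perfect set. Alternatively, one extracts the perfect set directly: the countable completeness of the measures on $R$ rules out the spreading pathology above, so whenever $p[R]$ is uncountable a genuinely real-splitting subtree of $R$ can be built by a Cantor scheme, whose $2^{\omega}$ branches project to a perfect subset of $A$. The crux, and the only place the full strength of a Woodin cardinal is used, is the passage from $\lambda$-universal Baireness to a homogeneously Suslin representation.
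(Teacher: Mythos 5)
First, a point of comparison: the paper does not prove this statement at all --- it is imported as a black-box Fact with a citation to Feng--Magidor--Woodin, so there is no internal argument to measure you against. Judged on its own terms, your treatment of (1) is correct and is in fact exactly the original FMW derivation: the Baire property is the preceding equivalence Fact applied to the identity on $\mbbR$, and Lebesgue measurability follows by applying that same Fact to the identity map from $(\mbbR,\tau_d)$, using the classical theorem that the sets with the Baire property in the density topology are precisely the measurable sets (FMW also note that the Ellentuck topology gives the Ramsey property by the same device). Your random-algebra alternative is the standard Solovay-style argument and is rightly relegated to second place; just note that its parenthetical ``$\lambda$-universally Baire for all $\lambda$'' is available only in part (1), where $A$ is fully universally Baire.

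For (2), your main route (universal Baireness at a Woodin $\Rightarrow$ homogeneously Suslin $\Rightarrow$ determined, then Davis's $\ast$-game argument for the perfect set property) is a known correct route, but two things need repair. First, your motivating claim that ``the classical Cantor--Bendixson pruning simply fails for general $X$'' is wrong as stated: Mansfield's theorem says that every $\kappa$-Suslin set of cardinality greater than $\kappa$ contains a perfect set, by precisely that pruning; your $V=L$ example does not contradict this, since the thin uncountable $\Sigma^1_2$ set there is $\aleph_1$-Suslin of cardinality exactly $\aleph_1$. The genuine obstruction is only that Suslinness says nothing about sets of size at most $\kappa$, and for universally Baire trees on $\omega\times X$ the relevant $|X|$ is at least the continuum, making the dichotomy vacuous --- that is what the Woodin cardinal must overcome. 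Second, the bookkeeping in the upgrade step is glossed: Woodin's theorem produces weakly homogeneous representations (for both $A$ and its complement, via the two uB trees), and the Martin--Steel theorem converts a weakly homogeneous tree for the \emph{complement} into a homogeneous tree for $A$ --- note the complement twist --- and its usual hypotheses ask for trees surviving collapses of size $\lambda$ itself (roughly $\lambda^+$-universal Baireness), so at exactly $\lambda$-uB you must either verify the hypotheses or pass to $<\lambda$ appropriately. Finally, your ``direct Cantor scheme from countable completeness'' alternative is under-argued: countable completeness of each individual measure does not by itself rule out an uncountable thin projection (the work is done by the coherence of the homogeneity system, which is exactly what feeds Martin's determinacy proof), so the Martin-plus-Davis route should remain primary rather than optional.
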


By a basic argument of forcings, we can replace posets $\mbbP$ in the definition of the universally Baireness with L\'{e}vy collapses.

\begin{lemma}
    Let $\lambda$ be an infinite cardinal and $A\subseteq \mbbR$.
    The following are equivalent.
    \begin{enumerate}
             \item $A$ is $\lambda$-universally Baire.
             \item there are sets $X$ and $Y$ and trees $S$ on $\omega\times X$ and $T$ on $\omega\times Y$ such that 
     \begin{itemize}
         \item $p[S]=A, p[T]=\mbbR \setminus A$.
         \item for all $\mu<\lambda$ there exists $\kappa\in [\mu, \lambda)$ such that 
         \begin{center}
         $\Vdash_{\collkappa} ``p[S]\cup p[T] = \mbbR".$
         \end{center}
     \end{itemize}
         \end{enumerate}
\end{lemma}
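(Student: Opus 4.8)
The plan is to prove the two implications separately, with (1)$\Rightarrow$(2) being immediate and (2)$\Rightarrow$(1) carrying the real content. For (1)$\Rightarrow$(2) I would take the very trees $S,T$ witnessing $\lambda$-universal Baireness. Given $\mu<\lambda$, any $\kappa\in[\mu,\lambda)$ satisfies $|\collkappa|=\kappa<\lambda$, so the defining clause of $\lambda$-universal Baireness, applied to the poset $\collkappa$, yields $\Vdash_{\collkappa}``p[S]\cup p[T]=\mbbR"$ directly; thus the required $\kappa$ exists for every $\mu$.

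For (2)$\Rightarrow$(1), let $S,T$ be as in (2); I must show $\Vdash_\mbbP``p[S]\cup p[T]=\mbbR"$ for every poset $\mbbP$ with $|\mbbP|<\lambda$. I would argue by contradiction: suppose some $p\in\mbbP$ and some name $\dot{x}$ satisfy $p\Vdash_\mbbP\dot{x}\notin p[S]\cup p[T]$. Fix a $\mbbP$-generic $g\ni p$ and set $x=\dot{x}_g\in\mbbR^{V[g]}$. Writing $S_x=\{t\in X^{<\omega}\mid (x\restrict|t|,t)\in S\}$ and likewise $T_x$, the assumption says that both $S_x$ and $T_x$ are well-founded in $V[g]$. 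Now choose $\mu$ with $|\mbbP|<\mu<\lambda$ and, by (2), some $\kappa\in[\mu,\lambda)$ with $\Vdash_{\collkappa}``p[S]\cup p[T]=\mbbR"$; note $\kappa>|\mbbP|$.

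The key step is absorption. By Fact \ref{absorb}, $\mbbP\times\collkappa\simeq\collkappa$. Forcing with $\collkappa$ over $V[g]$ to obtain $H$, the pair $g\times H$ is $(\mbbP\times\collkappa)$-generic over $V$, so the forcing equivalence furnishes a $\collkappa$-generic $G$ over $V$ with $V[G]=V[g][H]$. In $V[G]$ we have $p[S]\cup p[T]=\mbbR$, whence $x\in p[S]\cup p[T]$ there. But well-foundedness is upward absolute: a rank function for $S_x$ (resp.\ $T_x$) in $V[g]$ remains a rank function in the larger model $V[G]\supseteq V[g]$, so $S_x$ and $T_x$ are still well-founded in $V[G]$, i.e.\ $x\notin p[S]\cup p[T]$ in $V[G]$---a contradiction. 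Hence no such $p,\dot{x}$ exist, and since $\mbbP$ was an arbitrary poset of size $<\lambda$, $A$ is $\lambda$-universally Baire.

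The main obstacle is precisely the asymmetry of tree projections under forcing: the statement $x\in p[S]$ cannot be pulled back from $V[G]$ to the intermediate model $V[g]$, since a witnessing branch may be new. The fix is to run the argument contrapositively through well-foundedness, which---unlike ill-foundedness---is preserved when passing to a larger model, so that the failure of $x\in p[S]\cup p[T]$ genuinely transfers upward and contradicts the collapse-absoluteness supplied by (2). The absorption $\mbbP\times\collkappa\simeq\collkappa$ is exactly what lets a single collapse-generic extension simultaneously see the problematic real coming from $V[g]$ and the global equality $p[S]\cup p[T]=\mbbR$.
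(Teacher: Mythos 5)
Your proposal is correct and follows essentially the same route as the paper: both reduce to the implication (2)$\Rightarrow$(1), absorb $\mbbP$ into $\collkappa$ via Fact \ref{absorb} for some $\kappa>|\mbbP|$, and transfer membership in $p[S]\cup p[T]$ between $V[g]$ and $V[g][H]=V[G]$ using the absoluteness of well-foundedness of the local trees $S_x$, $T_x$. The only difference is presentational: you argue by contradiction from a condition forcing $\dot{x}\notin p[S]\cup p[T]$, while the paper argues directly that ill-foundedness of $S_r$ in the collapse extension pulls down to the $\mbbP$-extension; these are the same absoluteness step read in contrapositive.
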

 \begin{proof}
      It suffices to prove that (2) implies (1).
        Let $S$ and $T$ be trees satisfying conditions in $(2)$ and $\mathbb{P}$ be an arbitrary poset of cardinality less than $\lambda$.
        We show that $V[G]\models ``p[S]\cup p[T]=\mbbR$" for all $\mbbP$-generic filter $G.$
        Let $\kappa\in(|\mbbP|,\lambda)$ be such as in (2).

        By Fact~\ref{absorb},
        $V[G][H]\models ``p[S]\cup p[T]=\mbbR"$
        for all $\collkappa$-generic filter $H$ over $V[G]$.
        Thus for any $r\in \mbbR^{V[G]}$, $r\in p[S]$ or $r\in p[T]$ holds in $V[G][H].$
        If $r\in p[S]$ in $V[G][H],$ then tree 
        \begin{center}
        $S_r=\bigcup_{n\in\omega}\{s\mid \langle r{\upharpoonright}n, s\rangle\in S\}$
        \end{center}
        is ill-founded in $V[G][H].$
        By the absoluteness, $S_r$ is also ill-founded in $V[G].$
        Therefore $V[G]\models r\in p[S].$
        By the same argument, we can show that if $r\in p[T]$ in $V[G][H]$ then $r\in p[T]$ in $V[G].$
     \end{proof}

\section{Tower forcing}

In this section, we first present basic definitions and properties about tower forcings.
Then we define the presaturation and factorization of towers, and show that their properties are related.

 \begin{definition}
    Let $\pi : A \rightarrow B$ be a function and $I$ be an ideal on $A$. Then the \textit{projection} of $I$ to $B$ by $\pi$ is the ideal $I'$ on $B$ defined by 
    \begin{center}
    $X \in I'$ $\Leftrightarrow$ $\pi^{-1} (X) \in I$.
    \end{center}
    
    Let $\langle U, <_U \rangle$ be a linearly ordered set, $\langle W_a \mid a\in U \rangle$ be a sequence of sets with commuting mappings $\pi_{a',a} \colon W_{a'} \rightarrow W_a$ for $a <_U a'$,
    and $I_a$ be an ideal on $W_a$.
    $\mcalI=\langle I_a\mid a\in U\rangle$ is a \textit{tower} if $I_a$ is the projection of $I_{a'}$ to $X_a$ by $\pi_{a,a'}$  for all $a <_U a'$.

    The order type of $U$ is called the \textit{height} of the tower.
    A tower is called $\kappa$-\textit{tower} if each ideal $I_\alpha$ is fine, normal, and  $\kappa$-complete.
 \end{definition}

\begin{remark}
 The projection of fine,  normal and $\kappa$-complete ideal is also fine,  normal and $\kappa$-complete.
\end{remark}

In this paper, we assume that the height of a tower is inaccessible and 
the base sets of ideals in a tower $\mcalI$ of height $\lambda$ have cardinalities less than $\lambda$.
Indeed, specific towers in this paper satisfy these conditions.

We can define a preorder associated
to a tower of ideals.

\begin{definition}\label{defoftowerforcing}
    Let $\mathcal{I} = \langle I_a \mid a\in U \rangle$ be a tower of ideals on sets $\langle W_a \mid a\in U\rangle$ with commuting mappings $\langle \pi_{a',a} \mid a <_U a' \rangle.$
    We define a poset $\mbbP_\mcalI$ on $\bigcup_{a\in U} I^+_a$ as follows.
    For $X, Y\in \mathbb{P}_\mathcal{I}$, fix $a, b\in U$ such that $X\in I^+_a$ and $Y\in I^+_b$.
    We say 
    \begin{center}
    $X \leq_{\mathbb{P}_\mathcal{I}} Y \Leftrightarrow \pi^{-1}_{c, a}(X) \setminus \pi^{-1}_{c, b}(Y) \in I_c$
    \end{center}
    for $c\in U$ such that $a, b \leq_U c.$

    We call this $\mathbb{P}_\mathcal{I}$ the \textit{tower forcing} by $\mathcal{I}.$
    For $X\in\PI$ if there is the minimum $a\in U$ such that $X\in I^+_a$ then we call that $a$ the \textit{support} of $X$ and let denote $\mathrm{supp}(X)$.
    In this paper, we deal towers with wellordered $U$.
\end{definition}

\begin{remark}\label{liftup}

    For $X\in I^+_a$ and $a<_U a'$,
    $X\leq_{\PI} \pi_{a',a}^{-1}(X)$ and $\pi_{a',a}^{-1}(X) \leq_{\PI} X$.
    Thus the order in $\PI$ is independent of the choice of $c$ in Definition \ref{defoftowerforcing}.
\end{remark}

For a $\mathbb{P}_\mathcal{I}$-generic filter over $V$, we can construct the generic ultrapower $Ult(V;G)$ and the generic ultrapower embedding $j \colon V \rightarrow M$.
\begin{lemma}
    Suppose that $U$ is wellordered, $\mathcal{I}=\langle I_a\mid a\in U\rangle$ is a tower on $\langle W_a \mid a\in U\rangle$,
    and $G$ is a $\mathbb{P}_\mathcal{I}$-generic filter over $V$.
    Then, $G^a =\{X^a \mid
    X\in G,
    a\leq_U \mathrm{supp}(X)\}$ is a $V$-ultrafilter over $W_a$,
    where $X^a=\{\pi_{\mathrm{supp}(X),a} (x)\mid x\in X\}$.
\end{lemma}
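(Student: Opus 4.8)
The plan is to check the four requirements for $G^a$ to be a $V$-ultrafilter on $W_a$ --- that it is a proper subset of $\mathcal{P}(W_a)^V$, upward closed, closed under finite intersections, and that it decides every $A\in\mathcal{P}(W_a)^V$ --- by reducing each clause to the fact that $G$ is a generic filter on $\PI$. The mechanical engine is that projecting a condition down to level $a$ never leaves $G$: if $X\in G$ and $b=\mathrm{supp}(X)\ge_U a$, then $X\subseteq\pi_{b,a}^{-1}(\pi_{b,a}[X])=\pi_{b,a}^{-1}(X^a)$, whence $X\le_{\PI}\pi_{b,a}^{-1}(X^a)$ and, by Remark~\ref{liftup}, $X\le_{\PI}X^a$; since $G$ is upward closed this gives $X^a\in G$. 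The same inclusion yields $\pi_{b,a}^{-1}(X^a)\in I_b^+$, so $X^a\in I_a^+$ by the definition of the projected ideal. Thus every element of $G^a$ is an $I_a$-positive (in particular nonempty) subset of $W_a$ lying in $V$, so $G^a\subseteq\mathcal{P}(W_a)^V$ and $\emptyset\notin G^a$.

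The reduction I would isolate is the identity $G^a=G\cap I_a^+$; equivalently, for $A\subseteq W_a$ with $A\in I_a^+$ one has $A\in G^a$ iff $A\in G$. The forward implication is the previous paragraph. For the converse one uses the structural fact that, in the towers under consideration, a positive subset of $W_a$ has support exactly $a$: it cannot be contained in any ground set $W_c$ with $c<_U a$, since a stationary subset of $\mathcal{P}_\kappa(V_\alpha)$ is never concentrated on a lower $V_\alpha$. Hence $\mathrm{supp}(A)=a$ and $A=A^a\in G^a$ whenever $A\in G$ is positive. Granting $G^a=G\cap I_a^+$, upward closure is immediate: if $A\in G^a$ and $A\subseteq B\subseteq W_a$ with $B\in V$, then $A\le_{\PI}B$, so $B\in G$, and $B\in I_a^+$. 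For closure under intersection, note that $A\cap C$ is the greatest lower bound of $A,C$ in $\PI$ (if $D\le_{\PI}A,C$ then $D\setminus(A\cap C)=(D\setminus A)\cup(D\setminus C)\in I_a$); two members of $G$ are compatible, so their intersection is positive, and the standard density argument shows that the greatest lower bound of two elements of a generic filter again lies in the filter.

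For the ultrafilter clause, fix $A\in\mathcal{P}(W_a)^V$; at most one of $A$ and $W_a\setminus A$ lies in $G^a$, as their intersection is empty. If one of them is $I_a$-null, the other differs from the maximum condition $W_a\in G$ by a null set, hence lies in $G^a$. If both are positive, I would verify that $\{A,W_a\setminus A\}$ is predense: given $E$ with $\mathrm{supp}(E)=e$ and $c\ge_U e,a$, write $\pi_{c,e}^{-1}(E)=\bigl(\pi_{c,e}^{-1}(E)\cap\pi_{c,a}^{-1}(A)\bigr)\cup\bigl(\pi_{c,e}^{-1}(E)\cap\pi_{c,a}^{-1}(W_a\setminus A)\bigr)$; one of the two pieces is $I_c$-positive and is a common refinement of $E$ with $A$ or with $W_a\setminus A$. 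By genericity $G$ contains a condition below $A$ or below $W_a\setminus A$, so $A\in G$ or $W_a\setminus A\in G$, and the identity $G^a=G\cap I_a^+$ upgrades this to $A\in G^a$ or $W_a\setminus A\in G^a$.

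I expect the main obstacle to be the support bookkeeping behind the identity $G^a=G\cap I_a^+$: one must check that lifting along the $\pi_{c,a}^{-1}$ and projecting along $\pi_{c,a}$ keep supports in the range at or above $a$, so that the operation $(\cdot)^a$ is defined and commutes through intermediate levels as used above. The predensity computation is the only point where genericity is genuinely used; once it and the identity are in hand, the filter axioms are routine transfers from $G$.
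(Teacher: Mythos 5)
The paper states this lemma without proof, so there is no official argument to compare against; judged on its own, your proof is correct and is the standard one. The skeleton --- establish $X \leq_{\mathbb{P}_{\mathcal{I}}} X^a$ via $X \subseteq \pi_{b,a}^{-1}(X^a)$ and Remark~\ref{liftup}, reduce everything to the identity $G^a = G \cap I_a^+$, get closure under intersections from the fact that $A\cap C$ is the infimum of two compatible conditions, and get the ultrafilter property from predensity of $\{A, W_a\setminus A\}$ (splitting $\pi_{c,e}^{-1}(E)$ into the two positive-or-null pieces) --- all checks out. The one step worth flagging is the claim that a positive subset of $W_a$ has support exactly $a$, which you justify by appeal to stationary sets; as literally stated the lemma is about arbitrary towers, for which this could fail, but it does follow from fineness alone (a subset of $\mathcal{P}_\kappa(V_c)$ with $c<a$ is killed by the fine ideal on $\mathcal{P}_\kappa(V_a)$), and the paper's standing convention restricts to fine ideals on the sets $\mathcal{P}_\kappa(V_\alpha)$, so your argument is adequate in context. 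Minor cosmetic point: the computation $D\setminus(A\cap C)=(D\setminus A)\cup(D\setminus C)$ should really be carried out on the preimages $\pi_{c,\mathrm{supp}(D)}^{-1}(D)$ etc.\ when $D$ lives at a higher level, but the intended argument is clear and correct.
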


For a $\mathbb{P}_\mathcal{I}$-generic filter $G$ and $a\in U$, we can construct the generic ultrapower $Ult(V;G^a).$
Let $(M_a; E_a)= Ult(V;G^a)$ and 
$j_a \colon V \rightarrow (M_a; E_a)$
 be the ultrapower embedding.
 For $a<_U a'$
 we define $j_{a,a'} : M_a\rightarrow M_{a'}$ be $j_{a,a'}([f]_{G_a})=[f']_{G_{a'}}$
 where $f$ is a function from $W_a$ to $V$ and $f'$ is the function from $W_{a'}$ to $V$ defined by $f'(x)=f(\pi_{a',a}(x))$.

 Then $j_{a,a'}$ is an elementary embedding,
 and 
 \begin{center}
 $\langle M_a, j_a, j_{a,a'}\mid a,a'\in U, a<_U a' \rangle$
 \end{center}
 is a commuting system.
 Hence we obtain the ultrapower $(M; E)$ and $j : V \rightarrow M$ via $\mcalI$, as the direct limit of this system.

 We give an important example of a tower, the stationary tower.

 \begin{definition}
     Let $\kappa$ be a regular cardinal, $W$ be a set of cardinality at least $\kappa$,
     and $X\subseteq \mcalP_\kappa(W)$.
     \begin{itemize}
         \item $X$ is \textit{closed} if for all $\gamma<\kappa$ and $\subseteq$-increasing sequence $\langle z_\alpha\mid \alpha<\gamma\rangle$ in $X$,
          $\bigcup_{\alpha<\gamma}z_\alpha \in X$.
         \item $X$ is \textit{unbounded} if for all $x\in \mcalP_\kappa(W)$ there is $z\in X$ such that $x\subseteq z$.
         \item $X$ is \textit{club} if $X$ is closed and unbounded.
         \item $X$ is \textit{stationary} if $X\cap C\neq \emptyset$ for every club set $C\subseteq \mcalP_\kappa(W)$.
     \end{itemize}
     We let $NS_{\kappa, \alpha}$ denote the set of all non-stationary sets in $\mcalP_\kappa(V_\alpha)$ for ordinals $\kappa\leq\alpha$.
 \end{definition}

The proof of Fact \ref{stationary} can be found in Foreman-Magidor-Shelah \cite{foreman1988martin}.
\begin{fact}\label{stationary}
    Let $\kappa$ be a regular cardinal and $W$ be a set of cardinality at least $\kappa$.
    For every club set $C\subseteq \mcalP_\kappa(W)$ there is a function $F : [W]^{<\omega}\rightarrow W$ such that 
    \begin{center}
    \{$x\in \mcalP_\kappa(W)\mid x\cap\kappa \in\kappa\land F[[x]^{<\omega}]\subseteq x\} \subseteq C$.
    \end{center}
\end{fact}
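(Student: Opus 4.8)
The plan is to establish the standard fact that the club filter on $\mcalP_\kappa(W)$ is generated by the closure points of finitary functions, the only subtlety being that ``closed'' here means closed under $\subseteq$-increasing chains of length $<\kappa$, not under arbitrary small unions. Given the club $C$, I would use unboundedness to attach to each finite $e\in[W]^{<\omega}$ a witness $c_e\in C$ with $e\subseteq c_e$, chosen \emph{monotonically}, and then build a single $F\colon[W]^{<\omega}\to W$ whose closure points (subject to $x\cap\kappa\in\kappa$) recover $x$ as $\bigcup\{c_e\mid e\in[x]^{<\omega}\}$. The membership $x\in C$ is then extracted by an induction on cardinality that feeds on chain-closedness of $C$. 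Throughout, as in the intended applications where $W=V_\alpha$, I take $\kappa\subseteq W$, so $x\cap\kappa\in\kappa$ means $x\cap\kappa$ is an ordinal $\delta<\kappa$ with $\delta\subseteq x$.

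First I would choose the $c_e$ by recursion on $|e|$: given $c_{e'}$ for all $e'\subsetneq e$, the set $e\cup\bigcup_{e'\subsetneq e}c_{e'}$ has cardinality $<\kappa$ (a finite union of sets of size $<\kappa$), so unboundedness supplies $c_e\in C$ containing it; this makes $e'\subseteq e$ imply $c_{e'}\subseteq c_e$. Next I would define $F$ to enumerate each $c_e$, using a fixed pairing to decode a finite input into a parameter $e$ together with an ordinal index $\xi$, and to output the $\xi$-th element of $c_e$; I would also fold into $F$ a coded surjection of $x\cap\kappa$ onto $x$ and maps forcing each length $|c_e|$ (for $e\subseteq x$) into $x\cap\kappa$. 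With $x\cap\kappa\in\kappa$ this guarantees that $\delta=x\cap\kappa$ is an ordinal large enough to index every relevant $c_e$ and that $\delta\subseteq x$, so closure of $x$ under $F$ yields $c_e\subseteq x$ for all finite $e\subseteq x$, and hence $x=\bigcup\{c_e\mid e\in[x]^{<\omega}\}$.

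For the conclusion I would extend the assignment to subsets of $x$ by $c_z=\bigcup\{c_e\mid e\in[z]^{<\omega}\}$, which is $\subseteq$-monotone and continuous in $z$, and prove $c_z\in C$ by induction on $\nu=|z|<\kappa$. The base case (finite $z$) is immediate, since monotonicity gives $\bigcup\{c_e\mid e\subseteq z\}=c_z\in C$. For the inductive step, fix an enumeration $z=\{w_\eta\mid\eta<\nu\}$ and put $z_\eta=\{w_\zeta\mid\zeta<\eta\}$; then $\langle c_{z_\eta}\mid\eta<\nu\rangle$ is $\subseteq$-increasing and continuous, each $c_{z_\eta}\in C$ by the inductive hypothesis (as $|z_\eta|<\nu$), and its union is $c_z$, so $c_z\in C$ because $C$ is closed under increasing chains of length $\nu<\kappa$. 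Applying this with $z=x$ gives $x=c_x\in C$, as required.

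The main obstacle, and the reason the argument is not a one-line union, is precisely that $C$ is only chain-closed: writing $x=\bigcup\{c_e\mid e\in[x]^{<\omega}\}$ exhibits $x$ as an \emph{arbitrary} union of members of $C$, which need not lie in $C$. The monotone choice of witnesses together with the induction on $|z|$ is exactly what reorganizes this union, at every cardinality level, into an increasing continuous chain to which chain-closedness applies. The parts I would not dwell on are the explicit pairing encoding $(e,\xi)$ as a finite subset of $W$, the coded surjection that lets $x\cap\kappa\in\kappa$ furnish the ordinal indices, and the routine verification that the set of such closure points is itself club.
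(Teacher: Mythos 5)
The paper itself gives no proof of this fact, deferring to Foreman--Magidor--Shelah; your argument is the standard one found there and is correct: the monotone choice of witnesses $c_e\in C$ for $e\in[W]^{<\omega}$, together with the induction on $|z|$ that reorganizes $\bigcup\{c_e\mid e\in[z]^{<\omega}\}$ into a $\subseteq$-increasing chain of length $<\kappa$, is exactly how the chain-closedness of $C$ must be used, and the hypothesis $x\cap\kappa\in\kappa$ (with $\kappa\subseteq W$, as in all of the paper's applications) is correctly deployed to make each length $|c_e|$ available as a set of indices inside $x$. The one step you defer, packaging the maps $(e,\xi)\mapsto(\text{the }\xi\text{-th element of }c_e)$ and $e\mapsto|c_e|$ into a single $F\colon[W]^{<\omega}\rightarrow W$, is a routine coding matter and does not affect the substance of the argument.
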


The proof of Fact \ref{NStower} can be found in Jech \cite{jech2003set}.
 \begin{fact}\label{NStower}
 Let $\lambda$ be an inaccessible cardinal, $\kappa$ be an uncountable regular cardinal and 
 $\alpha$ be an ordinal with
 $\kappa\leq\alpha<\lambda$.
    Then $NS_{\kappa,\alpha}$ is a fine, normal and $\kappa$-complete ideal on $\mcalP_{\kappa}(V_\alpha)$ and
     $\langle NS_{\kappa,\alpha}\mid \alpha\in[\kappa,\lambda)\rangle$ is a tower with mappings $\pi_{\beta,\alpha}(X)=X\cap V_\alpha$ for $\alpha<\beta<\lambda$.
 \end{fact}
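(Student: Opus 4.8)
The plan is to reduce everything to standard closure properties of the club filter on $\mcalP_\kappa(V_\alpha)$ and then to verify the projection identity that makes the sequence a tower. Note that $NS_{\kappa,\alpha}$ is the dual of the filter generated by the club subsets of $\mcalP_\kappa(V_\alpha)$, and that it is a proper ideal: subsets and $<\kappa$-unions of nonstationary sets will be nonstationary once $\kappa$-completeness is known, $\emptyset$ is nonstationary, and $\mcalP_\kappa(V_\alpha)$ is stationary since every club is unbounded, hence nonempty. So the three named properties amount to showing that the club filter is fine, $\kappa$-complete, and normal.

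For fineness, given $i \in \bigcup \mcalP_\kappa(V_\alpha) = V_\alpha$, the set $\{a \mid i \in a\}$ is closed (an increasing union of sets containing $i$ contains $i$) and unbounded (adjoin $i$ to any $x$, keeping size $<\kappa$), hence club, so $\{a \mid i \notin a\} \in NS_{\kappa,\alpha}$. For $\kappa$-completeness I would show that an intersection of $\gamma < \kappa$ clubs $C_\xi$ is club: closedness is immediate, and for unboundedness I would, starting from any $x_0$, build a $\subseteq$-increasing $\omega$-sequence $x_0 \subseteq x_1 \subseteq \cdots$ in which $x_{n+1}$ absorbs, for each $\xi < \gamma$, some element of $C_\xi$ lying above $x_n$. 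Here regularity of $\kappa$ keeps each $x_{n+1}$ (a union of $\gamma<\kappa$ sets of size $<\kappa$) of size $<\kappa$, uncountability of $\kappa$ keeps $x_\omega = \bigcup_n x_n$ of size $<\kappa$, and closedness of each $C_\xi$ puts $x_\omega \in \bigcap_{\xi<\gamma} C_\xi$. Normality (the pressing-down formulation in the paper) I would deduce from the fact that a diagonal intersection $\triangle_{b \in V_\alpha} C_b = \{x \mid \forall b \in x,\ x \in C_b\}$ of clubs is again club, proved by the same closure-plus-back-and-forth scheme, using $\kappa$-completeness to know $\bigcap_{b \in x_n} C_b$ is unbounded at each step. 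Given a choice function $f$ with $\{a \mid f(a) \in a\}$ stationary but constant on no stationary set, one picks for each $b$ a club $C_b$ disjoint from $\{a \mid f(a) = b\}$; any $x$ in the stationary set meeting $\triangle_b C_b$ then has $f(x) = b \in x$ and $x \in C_b$, a contradiction.

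For the tower property I must check two things. The maps commute, since $(X \cap V_\beta) \cap V_\alpha = X \cap V_\alpha$ for $\alpha < \beta < \gamma$ gives $\pi_{\gamma,\alpha} = \pi_{\beta,\alpha} \circ \pi_{\gamma,\beta}$; and $NS_{\kappa,\alpha}$ is the projection of $NS_{\kappa,\beta}$ by $\pi_{\beta,\alpha}$, i.e. for $Y \subseteq \mcalP_\kappa(V_\alpha)$, $Y$ is stationary iff its lift $\hat Y = \pi_{\beta,\alpha}^{-1}(Y) = \{X \in \mcalP_\kappa(V_\beta) \mid X \cap V_\alpha \in Y\}$ is stationary. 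For the direction ``$\hat Y$ stationary $\Rightarrow Y$ stationary'' I would show that for any club $C \subseteq \mcalP_\kappa(V_\alpha)$ the preimage $\pi_{\beta,\alpha}^{-1}(C)$ is club in $\mcalP_\kappa(V_\beta)$ (closedness is direct; for unboundedness one alternately enlarges the whole set and pushes its $V_\alpha$-trace back into $C$), whence $\hat Y \cap \pi_{\beta,\alpha}^{-1}(C) \neq \emptyset$ produces a point of $Y \cap C$. The reverse direction is the crux: I would invoke Fact~\ref{stationary} to fix $F \colon [V_\beta]^{<\omega} \to V_\beta$ whose closure points lie in a given club $D \subseteq \mcalP_\kappa(V_\beta)$, define on $V_\alpha$ the shadow operation $G(s) = (\text{the } F\text{-closure of } s) \cap V_\alpha$, a countable subset of $V_\alpha$ coded as a point-valued function, and check that whenever $y \in \mcalP_\kappa(V_\alpha)$ is closed under $G$ with $y \cap \kappa \in \kappa$, the full $F$-closure $X$ of $y$ satisfies $X \cap V_\alpha = y$ (so $X \cap \kappa = y \cap \kappa \in \kappa$) and $X \in D$. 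This exhibits a club of such $y$ inside $\pi_{\beta,\alpha}[D]$, so stationarity of $Y$ yields some $y \in Y$ with a lift $X \in D$, giving $X \in \hat Y \cap D$.

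The main obstacle is precisely this reverse direction: naively closing a set $y \subseteq V_\alpha$ under $F$ injects new points into its $V_\alpha$-trace, and the whole point is that demanding $y$ be closed under the induced shadow operation $G$ is exactly what prevents this, so that the $V_\alpha$-trace of the lift is pinned to $y$. Everything else is the routine regular-and-uncountable-$\kappa$ bookkeeping in the closure arguments, together with the (standard) equivalence between the diagonal-intersection form of normality and the pressing-down form stated in the paper.
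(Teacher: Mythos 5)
The paper offers no proof of this Fact, deferring to Jech, so there is nothing to compare against except the standard argument --- which is exactly what you give, and correctly. In particular you handle the one genuinely delicate point properly: in the direction ``$Y$ stationary $\Rightarrow \pi_{\beta,\alpha}^{-1}(Y)$ stationary'' you use Fact~\ref{stationary} to replace the club $D\subseteq\mcalP_\kappa(V_\beta)$ by closure points of an $F\colon[V_\beta]^{<\omega}\to V_\beta$, and you observe that if $y\in\mcalP_\kappa(V_\alpha)$ is closed under the induced shadow operation $G(s)=\mathrm{cl}_F(s)\cap V_\alpha$ then $\mathrm{cl}_F(y)\cap V_\alpha=y$ (since $\mathrm{cl}_F(y)=\bigcup_{s\in[y]^{<\omega}}\mathrm{cl}_F(s)$ by finite arity), so the trace of the lift is pinned to $y$ and $y\cap\kappa\in\kappa$ is preserved; the remaining verifications (fineness, $\kappa$-completeness, diagonal intersections for normality, and clubness of $\pi_{\beta,\alpha}^{-1}(C)$ for the easy direction) are the routine chain constructions using regularity and uncountability of $\kappa$, all of which you carry out correctly.
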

 \begin{definition}
     We call the tower  $\langle NS_{\kappa,\alpha}\mid \alpha\in[\kappa,\lambda)\rangle$ the \textit{stationary tower}
     and denote by $\mcalI^\kappa_\lambda$,
     and let $\mathbb{Q}^\kappa_\lambda$ denote the associated poset to $\mcalI^\kappa_\lambda$.
 \end{definition}

 \begin{definition}
     Let $\kappa$ be a regular cardinal,  $\lambda$ be an inaccessible cardinal with $\kappa<\lambda$ and $\mcalI$ be a $\kappa$-tower on
     $\langle \mcalP_\kappa(V_\alpha)\mid \alpha\in[\kappa,\lambda)\rangle$.
     Suppose that $\alpha<\lambda$ is an ordinal, $A\subseteq V_\alpha$ and $\langle X_a\mid a\in A\rangle$ be a sequence of conditions of $\PI$.
     Let $\alpha^*=\min\{\alpha<\lambda\mid A\subseteq V_\alpha\}$.
     For each $a\in A$
     we let $\eta_\alpha$ be the minimum ordinal such that $\bigcup X_\alpha =V_{\eta_\alpha}$ and $\eta=\max\{\alpha^*, \sup_{a\in A} \eta_a\}$.
     We define 
     \begin{center}
         $\bigtriangledown_{a\in A} X_a =\{x\in \mcalP_\kappa (V_\eta)\mid \exists a \in A\cap x~(x\cap \bigcup X_a \in X_a)\}$
     \end{center} and 
     \begin{center}
         $\bigtriangleup_{a\in A} X_a =\{x\in \mcalP_\kappa (V_\eta)\mid \forall a\in A\cap x~(x\cap \bigcup X_a \in X_a)\}$.
     \end{center}
 \end{definition}
 \begin{lemma}\label{supinf}
     Let $\kappa$ be a regular cardinal,  $\lambda$ be an inaccessible cardinal with $\kappa<\lambda$ and $\mcalI$ be a $\kappa$-tower on
     $\langle \mcalP_\kappa(V_\alpha)\mid \alpha\in[\kappa,\lambda)\rangle$.
     Suppose that $A\subseteq V_\lambda$ and $|A|<\lambda$.
         Then
         $\bigtriangledown_{a\in A} X_a$
         is the supremum of 
         $\{X_a\mid a\in A\}$ in $\PI$, and
         $\bigtriangleup_{a\in A} X_a$
         is the infimum of $\{ X_a\mid a\in A\}$ in $\PI$.
 \end{lemma}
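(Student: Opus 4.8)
The plan is to verify the two universal properties directly from the definition of $\leq_{\PI}$, using fineness of the ideals for the bound inequalities and normality for minimality/maximality. Throughout write $\nabla=\bigtriangledown_{a\in A}X_a$ and $\triangle=\bigtriangleup_{a\in A}X_a$. First I would record two reductions. Since $\lambda$ is inaccessible (hence regular) and $|A|<\lambda$, we have $\alpha^*<\lambda$ and $\sup_{a\in A}\eta_a<\lambda$, so $\eta<\lambda$ and both $\nabla$ and $\triangle$ are subsets of $\mcalP_\kappa(V_\eta)$, i.e. candidates for conditions at level $\eta$. Next, for any $\zeta\geq\eta$ and $x\in\mcalP_\kappa(V_\zeta)$ note that $A\cap(x\cap V_\eta)=A\cap x$ (as $A\subseteq V_{\alpha^*}\subseteq V_\eta$) and $(x\cap V_\eta)\cap\bigcup X_a=x\cap V_{\eta_a}=\pi_{\zeta,\eta_a}(x)$ (as $\eta_a\leq\eta$). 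Hence, lifting to level $\zeta$,
\[
\pi_{\zeta,\eta}^{-1}(\nabla)=\{x\in\mcalP_\kappa(V_\zeta)\mid \exists a\in A\cap x~(x\in\pi_{\zeta,\eta_a}^{-1}(X_a))\},
\]
\[
\pi_{\zeta,\eta}^{-1}(\triangle)=\{x\in\mcalP_\kappa(V_\zeta)\mid \forall a\in A\cap x~(x\in\pi_{\zeta,\eta_a}^{-1}(X_a))\},
\]
which are exactly the diagonal union and intersection of the lifted conditions $\pi_{\zeta,\eta_a}^{-1}(X_a)$.

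For the bound inequalities I would argue at level $\eta$ itself (taking $c=\eta$ in Definition~\ref{defoftowerforcing}; this is legitimate by Remark~\ref{liftup}). To see $X_a\leq_{\PI}\nabla$, consider $\pi_{\eta,\eta_a}^{-1}(X_a)\setminus\nabla$: any $x$ in this difference satisfies $x\cap V_{\eta_a}\in X_a$ yet fails the existential clause defining $\nabla$, so $a\notin x$ (otherwise $a\in A\cap x$ would witness $x\in\nabla$). Thus the difference is contained in $\{x\mid a\notin x\}\in I_\eta$ by fineness, and downward closure of $I_\eta$ gives $X_a\leq_{\PI}\nabla$. The same computation shows $\nabla\in I_\eta^+$, so $\nabla$ is a genuine condition, for otherwise $\pi_{\eta,\eta_a}^{-1}(X_a)$ and hence $X_a$ would be null. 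Symmetrically, $\triangle\setminus\pi_{\eta,\eta_a}^{-1}(X_a)\subseteq\{x\mid a\notin x\}\in I_\eta$, giving $\triangle\leq_{\PI}X_a$. So $\nabla$ is an upper bound and $\triangle$ a lower bound of $\{X_a\mid a\in A\}$.

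For minimality, let $Y$ be any upper bound and fix a level $\zeta\geq\eta,\eta_Y$. Put $E_a=\pi_{\zeta,\eta_a}^{-1}(X_a)\setminus\pi_{\zeta,\eta_Y}^{-1}(Y)$; each $E_a\in I_\zeta$ because $X_a\leq_{\PI}Y$. Using the first display, any $x\in\pi_{\zeta,\eta}^{-1}(\nabla)\setminus\pi_{\zeta,\eta_Y}^{-1}(Y)$ admits $a\in A\cap x$ with $x\in\pi_{\zeta,\eta_a}^{-1}(X_a)$, whence $x\in E_a$ with $a\in x$; therefore
\[
\pi_{\zeta,\eta}^{-1}(\nabla)\setminus\pi_{\zeta,\eta_Y}^{-1}(Y)\subseteq\{x\in\mcalP_\kappa(V_\zeta)\mid \exists a\in A\cap x~(x\in E_a)\},
\]
the diagonal union of the null sets $E_a$. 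Since a normal ideal is closed under diagonal unions, the right-hand side lies in $I_\zeta$, so $\nabla\leq_{\PI}Y$; this makes $\nabla$ the supremum. An entirely symmetric argument, with $F_a=\pi_{\zeta,\eta_Z}^{-1}(Z)\setminus\pi_{\zeta,\eta_a}^{-1}(X_a)$ for any common lower bound $Z$ and the $\forall$-clause in the second display, shows $Z\leq_{\PI}\triangle$, so $\triangle$ is the infimum (when $\triangle\in I_\eta$ this reads: $\{X_a\mid a\in A\}$ has no positive common lower bound, which is exactly what the computation yields).

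The only non-routine ingredient is the passage from the regressive-function form of normality stated in Section~2 to the closure of $I_\zeta$ under the diagonal unions above; one indexes over $V_\zeta=\bigcup\mcalP_\kappa(V_\zeta)$ by setting $E_i=\emptyset$ for $i\in V_\zeta\setminus A$, which leaves the diagonal union unchanged, and then the equivalence is the standard one (Jech~\cite{jech2003set}). The remaining care is bookkeeping: checking that $A\cap x$ is insensitive to intersecting with $V_\eta$, that every relevant level is $\leq\eta\leq\zeta$ so the projections compose correctly, and that the two displayed identifications of $\pi_{\zeta,\eta}^{-1}(\nabla)$ and $\pi_{\zeta,\eta}^{-1}(\triangle)$ hold verbatim. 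I expect the normality step to be the crux, with everything else reducing to fineness and downward closure of the ideals.
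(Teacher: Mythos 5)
Your proposal is correct and follows essentially the same route as the paper: fineness gives the bound inequalities (the offending difference set lies in $\{x \mid a \notin x\}$), and normality gives optimality. The only cosmetic difference is that you invoke closure of normal ideals under diagonal unions where the paper runs the equivalent pressing-down argument (a regressive choice of witness $a \in A \cap x$ followed by normality to fix a single $b_0$), and you spell out the infimum case and the positivity of $\bigtriangledown_{a\in A} X_a$, which the paper leaves implicit.
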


 \begin{proof}
     We only prove the former statement.
     The proof of the latter is essentially the same. 
     To see that 
     $\bigtriangledown_{a\in A} X_a$ is an upper bound in $\PI$,
     it suffices to show that 
     \begin{center}
     $\{x\in \mcalP_\kappa(V_\eta)\mid x\cap \bigcup X_b\in X_b\}\setminus \bigtriangledown_{a\in A} X_a$
     \end{center}
     is non-stationary set for all $b\in A$.
     If $x$ is an element of this set, then $b\notin x$ by the definition of $\bigtriangledown_{a\in A} X_a$. That is, this set does not intersect the club set $\{x\in\mcalP_\kappa(V_\eta)\mid b\in x\}$.

     To see the minimality, we show that if $X_a\leq_{\PI} Y$ for all $a\in A$ then $\bigtriangledown_{a\in A} X_a \leq_{\PI} Y$.
     Since Remark \ref{liftup} and $\bigcup X_a\subseteq V_\eta$
     we may assume that $\bigcup Y =V_\eta.$
     Suppose that $\bigtriangledown_{a\in A} X_a \setminus Y$ is stationary.
     Let $f : \bigtriangledown_{a\in A} X_a \setminus Y \rightarrow A$ be a function such that
     $f(x)\in  A\cap x$.
     By the normality, there exists $b_0\in A$ such that 
     \begin{center}
     \{$x\in \bigtriangledown_{a\in A} X_a\setminus Y\mid x\cap \bigcup X_{b_0}\in X_{b_0} $\}
     \end{center}
     is stationary. It follows that $X_{b_0}\nleq_{\PI} Y$, which is a contradiction.
 \end{proof}
 We consider the presaturation of towers, which guarantee significant properties of induced embedding.
 \begin{definition}
     Let $\lambda$ be an inaccessible cardinal, and $\mcalI$ be a tower of height $\lambda$.
     Then $\mcalI$ is said to be \textit{presaturated} if $\Vdash_{\mbbP_\mcalI} ``\lambda$ is a regular cardinal."
 \end{definition}

 \begin{fact}\label{wellfdd}
     Let $\lambda$ be an inaccessible cardinal and
     $\mcalI$ be a tower of height $\lambda$.
     Suppose that $G$ is a $\mbbP_\mcalI$-generic filter and $(M;E)$ is the ultrapower of $V$ by $G$.
     If $\mcalI$ is presaturated,
     then $(M;E)$ is closed under sequences of length less than $\lambda$ in $V[G]$. 
     In particular, $(M;E)$ is well-founded.
 \end{fact}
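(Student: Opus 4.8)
The plan is to prove the stated $<\lambda$-closure of $(M;E)$ and to obtain well-foundedness from it. For the latter, suppose $(M;E)$ were ill-founded and fix $\langle x_n\mid n<\omega\rangle\in V[G]$ with $x_{n+1}\mathrel{E}x_n$ for all $n$. Since $\omega<\lambda$, the closure puts this sequence into $M$, so there is $a\in M$ that $M$ regards as a function on $\omega$ with $a(n+1)\mathrel{E}a(n)$ for all $n$. Each $M_b$ is a genuine ultrapower, so by {\L}o\'{s}'s theorem $(M;E)$ satisfies the Axiom of Foundation, contradicting the existence of such an internal descending $E$-chain. Hence it suffices to establish the closure.

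Fix $\gamma<\lambda$ and a $\mbbP_\mcalI$-name $\dot s$ with a condition $p\Vdash\dot s\colon\check\gamma\to\dot M$. Working in $V$, for each $\xi<\gamma$ choose a maximal antichain $A_\xi$ below $p$ together with functions $g^\xi_r$ $(r\in A_\xi)$, where $g^\xi_r$ has domain $\mcalP_\kappa(V_{\alpha^\xi_r})$, such that $r\Vdash\dot s(\check\xi)=[\check g^\xi_r]_{\dot G}$. Now I use presaturation in the form of its standard combinatorial reformulation: since $\Vdash_{\mbbP_\mcalI}``\lambda$ is regular$"$, the set of $q\le p$ with $|\{r\in A_\xi\mid r\parallel q\}|<\lambda$ for every $\xi<\gamma$ is dense below $p$. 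Fix such a $q\in G$ and put $B_\xi=\{r\in A_\xi\mid r\parallel q\}$, a maximal antichain below $q$ of size $<\lambda$. Because $\lambda$ is inaccessible and $\langle B_\xi\mid\xi<\gamma\rangle\in V$, the ordinal $\bar\alpha=\max\bigl(\gamma,\sup\{\alpha^\xi_r\mid\xi<\gamma,\ r\in B_\xi\}\bigr)$ is $<\lambda$, lies in $V$, and every $r\in\bigcup_\xi B_\xi$ has $\mrm{supp}(r)\le\bar\alpha$.

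Next I build, in $V$, a single function at stage $\bar\alpha$ that represents the whole sequence. Write $R_r=\pi_{\bar\alpha,\mrm{supp}(r)}^{-1}(r)$. Define $F$ on $\mcalP_\kappa(V_{\bar\alpha})$ by letting $F(x)$ be the function with domain $x\cap\gamma$ whose value at $\xi$ is $g^\xi_r(\pi_{\bar\alpha,\alpha^\xi_r}(x))$, where $r$ is the unique member of $B_\xi$ with $r\in x$ and $\pi_{\bar\alpha,\alpha^\xi_r}(x)\in r$ (with a fixed default value when there is no such $r$ or more than one). For each fixed $\xi$ the selection picks out the generic member $r_\xi\in B_\xi\cap G$ almost everywhere: one has $R_{r_\xi}\in G^{\bar\alpha}$ and $\{x\mid r_\xi\in x\}$ co-non-stationary by fineness, while the set of $x$ at which a second member of $B_\xi$ is also selected is contained in a diagonal union of the sets $R_r\cap R_{r'}$ $(r\ne r'$ in $B_\xi)$; each such $R_r\cap R_{r'}$ is non-stationary because distinct members of an antichain are incompatible, so by normality the whole diagonal union is non-stationary. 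Hence $[x\mapsto F(x)(\xi)]_{G^{\bar\alpha}}=[g^\xi_{r_\xi}\circ\pi_{\bar\alpha,\alpha^\xi_{r_\xi}}]_{G^{\bar\alpha}}=s(\xi)$ for every $\xi$. Since $\gamma\subseteq V_{\bar\alpha}$ and $[x\mapsto x\cap\gamma]_{G^{\bar\alpha}}=j[\gamma]=(j[V_{\bar\alpha}])\cap j(\gamma)\in M$ by normality, $[F]_{G^{\bar\alpha}}$ is an element of $M$ with domain $j[\gamma]$ satisfying $[F]_{G^{\bar\alpha}}(j(\xi))=s(\xi)$; composing with the (internally computed) increasing enumeration of $j[\gamma]$ recovers $\langle s(\xi)\mid\xi<\gamma\rangle$ as an element of $M$, as required.

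The main obstacle is the almost-everywhere correctness of the selection defining $F$. A naive argument would require the generic ultrafilter $G^{\bar\alpha}$ to be $V$-complete for unions of $<\lambda$ many non-stationary sets, which fails in general, as $G^{\bar\alpha}$ is only $V$-$\kappa$-complete. The key point is that this deficiency is repaired precisely by normality: the relevant null sets are the pairwise intersections $R_r\cap R_{r'}$ of conditions from an antichain, and organizing them as a diagonal union guarded by the membership $r\in x$ lets normality — rather than mere $\kappa$-completeness — force their union to be non-stationary, which is exactly what permits a single function $F\in V$ to encode a sequence of length up to $\gamma<\lambda$. The other ingredient, the reduction to antichains $B_\xi$ of size $<\lambda$ with supports bounded below $\lambda$, is where presaturation (equivalently, the regularity of $\lambda$ in the extension) is used, and is the routine part.
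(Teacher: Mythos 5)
The paper does not actually prove this statement: it is quoted as a Fact with the proof deferred to Foreman's handbook chapter, and your argument is essentially that standard proof --- use the forced regularity of $\lambda$ to shrink each antichain $A_\xi$ to a set $B_\xi$ of size $<\lambda$ predense below a single $q\in G$, bound everything by some $\bar\alpha<\lambda$, and amalgamate the representing functions into one function $F$ at level $\bar\alpha$ via fineness and a normality/diagonal-union argument --- so it is correct and matches the intended reference. Two small points worth tightening: the ordinal $\bar\alpha$ should also dominate the supports of the conditions in $\bigcup_\xi B_\xi$ (not only the levels $\alpha^\xi_r$ of the domains of the $g^\xi_r$), and your use of ``(non-)stationary'' implicitly specializes to towers on $\langle\mcalP_\kappa(V_\alpha)\mid\alpha<\lambda\rangle$, which is the only case the paper uses; for a literally arbitrary tower one replaces ``non-stationary'' by ``in $I_a$'' and must check that the conditions in $B_\xi$ can serve as indices for the diagonal union, i.e.\ that they lie in $\bigcup W_{\bar\alpha}$.
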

 Henceforth, if the ultrapower $(M;E)$ is well-founded then we identify it with its transitive collapse.

  \begin{fact}\label{presat}
     Let $\kappa$ be a successor cardinal and $\lambda$ be an inaccessible cardinal with $\kappa<\lambda$.
     Suppose that $\mcalI$ is a presaturated $\kappa$-tower of ideals on $\langle \mcalP_\kappa(W_\alpha)\mid \alpha\in[\kappa,\lambda)\rangle$ with maps $\pi_{\beta,\alpha}(X)=X\cap W_\alpha$ 
     for $\kappa\leq\alpha<\beta<\lambda$,
     where $W_\alpha$ is a set with $\alpha\subseteq W_\alpha$.
     Let $G$ be a $\mbbP_{\mcalI}$-generic filter over $V$ and $j : V\rightarrow M$ be the ultrapower embedding associated to $G$.
     Then crit$(j)=\kappa$ and $j(\kappa)=\lambda$.
 \end{fact}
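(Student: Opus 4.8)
The plan is to compute the two values separately, working through the staged ultrapowers $j_\alpha\colon V\to M_\alpha$ by the generic $V$-ultrafilters $G^\alpha$ on $\mcalP_\kappa(W_\alpha)$ and the canonical maps $k_\alpha\colon M_\alpha\to M$ into the direct limit, so that $j=k_\alpha\circ j_\alpha$ for every $\alpha\in[\kappa,\lambda)$. Since $\mcalI$ is presaturated, Fact~\ref{wellfdd} makes $M$ well-founded; as each $M_\alpha$ embeds elementarily into $M$ via $k_\alpha$, every $M_\alpha$ is well-founded as well, and I identify all of them with their transitive collapses. The first, easy, step is $\mathrm{crit}(j)\geq\kappa$: because each $I_\alpha$ is $\kappa$-complete, a density argument shows that $G^\alpha$ is $\kappa$-complete for partitions lying in $V$, whence $j_\alpha\restriction\kappa=\mathrm{id}$; the system maps $j_{\alpha,\beta}$ then fix every $\gamma<\kappa$ (since $j_{\alpha,\beta}(\gamma)=j_{\alpha,\beta}(j_\alpha(\gamma))=j_\beta(\gamma)=\gamma$), and passing to the direct limit gives $j\restriction\kappa=\mathrm{id}$.

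Next I would prove the lower bound $j(\kappa)\geq\lambda$, which I expect to be the technical heart. Fix $\alpha\in[\kappa,\lambda)$ and consider the functions $h_\alpha(x)=x\cap\alpha$ and $g_\alpha(x)=\mathrm{ot}(x\cap\alpha)$ on $\mcalP_\kappa(W_\alpha)$, where $\alpha\subseteq W_\alpha$ is used. Fineness gives $j_\alpha(\gamma)\in[h_\alpha]_{G^\alpha}$ for each $\gamma<\alpha$, and normality gives the reverse inclusion, so that $[h_\alpha]_{G^\alpha}=j_\alpha[\alpha]$. By the {\L}o\'s theorem applied to $g_\alpha$, the ordinal $[g_\alpha]_{G^\alpha}$ equals $\mathrm{ot}(j_\alpha[\alpha])=\alpha$. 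Since $|x|<\kappa$ and $\kappa$ is a regular cardinal, $g_\alpha$ takes values below $\kappa$, so $\alpha=[g_\alpha]_{G^\alpha}<j_\alpha(\kappa)$. Applying the order-preserving map $k_\alpha$ and using $j(\kappa)=k_\alpha(j_\alpha(\kappa))$ together with $k_\alpha(\alpha)\geq\alpha$ yields $\alpha<j(\kappa)$. As $\alpha<\lambda$ was arbitrary, $j(\kappa)\geq\lambda$; in particular $j(\kappa)>\kappa$, so $\mathrm{crit}(j)=\kappa$.

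For the upper bound I would exploit that $\kappa$ is a successor cardinal, say $\kappa=\rho^+$. Since $\rho<\kappa=\mathrm{crit}(j)$ we have $j(\rho)=\rho$, and elementarity of $j$ turns the statement ``$\kappa$ is the successor cardinal of $\rho$'' into ``$j(\kappa)$ is the successor cardinal of $\rho$'' in $M$; that is, $j(\kappa)=(\rho^+)^M$. Now presaturation of $\mcalI$ means exactly that $\lambda$ remains regular in $V[G]$, so $\lambda$ is a cardinal there and hence a cardinal in the inner model $M\subseteq V[G]$. Because $\lambda>\rho$ is a cardinal of $M$, the least $M$-cardinal above $\rho$ satisfies $(\rho^+)^M\leq\lambda$. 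Combined with the lower bound this gives $j(\kappa)=(\rho^+)^M=\lambda$, completing the proof.

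The main obstacle is the lower-bound computation $[g_\alpha]_{G^\alpha}=\alpha$: it rests on the seed identity $[h_\alpha]_{G^\alpha}=j_\alpha[\alpha]$ (where fineness and normality enter) and on the observation that a subset of $\alpha$ of size $<\kappa$ has order type $<\kappa$, for which the regularity of the successor cardinal $\kappa$ is essential. One must also keep careful track of well-foundedness, supplied by presaturation through Fact~\ref{wellfdd}, in order to manipulate $[g_\alpha]_{G^\alpha}$ and $j(\kappa)$ as genuine ordinals and to transfer cardinalhood of $\lambda$ from $V[G]$ down to $M$.
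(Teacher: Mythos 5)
Your proof is correct; the paper does not prove Fact \ref{presat} itself but cites Foreman's handbook chapter, and your argument (seed computation $[x\mapsto \mathrm{ot}(x\cap\alpha)]_{G^\alpha}=\alpha$ via fineness and normality for the lower bound, plus $j(\kappa)=(\rho^+)^M\leq\lambda$ from presaturation for the upper bound) is exactly the standard one given there. The only point worth tightening is the use of normality: the paper's definition yields a positive set on which $f$ is constant, not a priori a positive subset of the given condition, so one should note the routine modification of $f$ off that condition before running the density argument.
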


 The proof of Fact \ref{wellfdd} and Fact \ref{presat} can be found in Foreman \cite{foreman2010ideals}.

 In the situation which we consider,
 the presaturation is characterized by the chain condition.

 \begin{definition}
 Let $\kappa$ and $\lambda$ be regular cardinals.
 A poset $\mbbP$ is said to have the \textit{weak} $(\kappa,\lambda)$-\textit{c.c.}
 if for all $\mu<\kappa$, sequence
 $\langle \mcalA_{\xi}\mid \xi < \mu\rangle$
 of antichains
 in $\mbbP$
 and $p\in \mbbP$
 there exists $q\leq_\mbbP p$ such that
 $|\{r\in \mcalA_\xi \mid r \parallel q\}|<\lambda$
 for all $\xi<\mu$.
 \end{definition}

 \begin{lemma}\label{chaincondition}
     Let $\lambda$ be an inaccessible cardinal,
     $\mu<\kappa=\mu^+ <\lambda$ be regular cardinals
     and
     $\mbbP$ be a poset of cardinality $\lambda$.
     Suppose that
     $\Vdash_\mbbP |\alpha|\leq \mu$ for all $\alpha\in[\kappa,\lambda)$.
     Then the following are equivalent.
     \begin{enumerate}
         \item $\Vdash_\mbbP ``\lambda$ is a regular cardinal".
         \item $\mbbP$ has the weak $(\kappa, \lambda)$-c.c.
         \item  Let $\langle \mcalA_\xi\mid \xi<\lambda\rangle$ be a sequence of maximal antichains  in $\mbbP$
         and $\{p^\xi_\eta \mid \eta<\lambda_\xi \}$ be an enumeration of $\mcalA_\xi$.
         Then for all $p\in\mbbP$ and club subsets $C\subseteq \lambda$
         there exists $\zeta\in C$ and $q\leq_\mbbP p$ such that 
         $\{\eta<\lambda_\xi \mid q \parallel p^\xi_\eta\}\subseteq \zeta$ for all $\xi <\zeta$.
     \end{enumerate}
 \end{lemma}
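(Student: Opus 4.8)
The plan is to prove the three implications $(3)\Rightarrow(2)\Rightarrow(1)$ and $(1)\Rightarrow(3)$, with the bulk of the work concentrated in $(1)\Rightarrow(3)$. The two easy directions set up the machinery. For $(3)\Rightarrow(2)$, I would argue contrapositively: a failure of the weak $(\kappa,\lambda)$-c.c.\ gives some $\mu<\kappa$, a sequence $\langle\mcalA_\xi\mid\xi<\mu\rangle$ of antichains and a condition $p$ below which every extension meets some $\mcalA_\xi$ in $\lambda$-many pieces. After extending each $\mcalA_\xi$ to a maximal antichain and padding the index set up to length $\lambda$ (repeating antichains harmlessly), one checks that the conclusion of (3) fails for the club $C=\lambda$: there is no $\zeta$ and no $q\leq p$ confining the compatibility sets below $\zeta$, since already the first $\mu<\kappa\leq\zeta$ antichains are hit $\lambda$-many times by every $q\leq p$.

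For $(2)\Rightarrow(1)$, I would show that the weak $(\kappa,\lambda)$-c.c.\ forces $\lambda$ to remain regular. Suppose toward a contradiction that some $p$ forces a cofinal map $\dot{f}\colon\check\gamma\to\check\lambda$ with $\gamma<\lambda$; since $\lambda$ is inaccessible and $\Vdash_\mbbP|\alpha|\le\mu$ for $\alpha\in[\kappa,\lambda)$, the only possible cofinality for $\lambda$ in the extension is some $\mu'\le\mu<\kappa$, so without loss $\gamma=\mu'<\kappa$. For each $\xi<\gamma$ choose a maximal antichain $\mcalA_\xi$ of conditions deciding the value $\dot f(\xi)$; applying the weak $(\kappa,\lambda)$-c.c.\ to $\langle\mcalA_\xi\mid\xi<\gamma\rangle$ and $p$ yields $q\le p$ meeting each $\mcalA_\xi$ in fewer than $\lambda$ conditions. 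Then $q$ forces each $\dot f(\xi)$ into a set of size $<\lambda$, so $q$ forces $\mathrm{ran}(\dot f)$ to be bounded below $\lambda$ (using $\mu<\kappa\le\lambda$ and the regularity of $\lambda$ in $V$), contradicting cofinality.

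The main obstacle is $(1)\Rightarrow(3)$, where regularity of $\lambda$ in the extension must be converted into the club-reflection statement. The idea is to work in $V[G]$ for a generic $G\ni p$ and use that $\lambda$ stays regular there. Fix the sequence $\langle\mcalA_\xi\mid\xi<\lambda\rangle$ of maximal antichains and the club $C$. In $V[G]$ define, for each $\xi<\lambda$, the ordinal $g(\xi)=\sup\{\eta<\lambda_\xi\mid p^\xi_\eta\in G\}$; since $G$ meets each maximal antichain and (by (1) together with the hypothesis $\Vdash_\mbbP|\alpha|\le\mu$) every proper initial segment of $\lambda$ has size $\le\mu<\kappa<\lambda$ in $V[G]$, regularity forces $g(\xi)<\lambda$. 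Now consider the function $h(\zeta)=\sup_{\xi<\zeta}g(\xi)$: again by regularity of $\lambda$ in $V[G]$ this is $<\lambda$ for each $\zeta<\lambda$, so the set of $\zeta<\lambda$ closed under $h$ (i.e.\ with $h(\zeta)\le\zeta$) contains a club $D\in V[G]$. Intersecting with the ground-model club $C$ (still club in $V[G]$) and choosing any $\zeta\in C\cap D$, I pick a condition $q\in G$ forcing "$\zeta\in C$ and for all $\xi<\zeta$, every $p^\xi_\eta\in G$ has $\eta<\zeta$." The delicate point is that I must extract from $G$ a single such $q\le p$ whose compatibility sets $\{\eta<\lambda_\xi\mid q\parallel p^\xi_\eta\}$ are genuinely contained in $\zeta$ for all $\xi<\zeta$ — membership of $p^\xi_\eta$ in $G$ only controls the conditions actually chosen by $G$, not all conditions compatible with $q$. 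Handling this will require strengthening $q$ so that for each $\xi<\zeta$ it decides the antichain $\mcalA_\xi$ to an element indexed below $\zeta$ and is incompatible with all $p^\xi_\eta$ for $\eta\ge\zeta$; a density/fusion argument of length $\zeta<\lambda$, carried out below the conditions in $G$ and exploiting that $\zeta$ has size $<\kappa\le\lambda$ so the construction does not run off the top, should produce it. This absorption of "compatible" into "chosen by the generic" is where I expect the real difficulty to lie.
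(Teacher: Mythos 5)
Your $(3)\Rightarrow(2)$ and $(2)\Rightarrow(1)$ are essentially the paper's arguments and are fine (modulo the small point that in $(3)\Rightarrow(2)$ you should apply (3) to a club such as $C=(\mu,\lambda)$ to guarantee $\zeta>\mu$, so that the bound really covers all $\xi<\mu$). The problem is in $(1)\Rightarrow(3)$, and it is exactly at the place you flag as "the delicate point." Your proposed repair --- a density/fusion construction of length $\zeta$ producing a $q$ that, for each $\xi<\zeta$, is incompatible with every $p^\xi_\eta$ with $\eta\geq\zeta$ --- does not go through: $\mbbP$ is an arbitrary poset (in the application it is a tower forcing) with no closure hypothesis, so a descending sequence of length $\zeta$, where $\zeta$ can be any ordinal below $\lambda$, need not have a lower bound, and there is no reason a single condition can be made incompatible with all but an initial segment of a maximal antichain by successive extensions. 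So as written the argument has a genuine gap.

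The missing idea is to replace "which antichain elements lie in $G$" by a $\mbbP$-name and let the forcing relation do the work. Define a name $\dot{f}$ for a function $\lambda\to\lambda$ by stipulating $p^\xi_\eta\Vdash\dot{f}(\xi)=\eta$; this is well defined because each $\mcalA_\xi$ is a maximal antichain, and your $g$ is precisely $\dot{f}^G$. Since (1) gives $\Vdash_\mbbP$ ``$\lambda$ is regular and $C$ is club in $\lambda$,'' we get $\Vdash_\mbbP\exists\zeta\in C\,(\dot{f}[\zeta]\subseteq\zeta)$, so below $p$ there are $q$ and $\zeta\in C$ with $q\Vdash_\mbbP\dot{f}[\zeta]\subseteq\zeta$. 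This single $q$ already has the required compatibility bound with no further strengthening: if $\xi<\zeta$ and $q\parallel p^\xi_\eta$, then a common extension forces both $\dot{f}(\xi)=\eta$ and $\dot{f}(\xi)<\zeta$, hence $\eta<\zeta$. The point you were missing is that ``$q$ forces $\dot{f}[\zeta]\subseteq\zeta$'' quantifies over all extensions of $q$, and therefore automatically controls every antichain member compatible with $q$, not merely the one selected by the generic filter.
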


 \begin{proof}

  $(3)\Rightarrow (2)$ : Let $p\in\mbbP$ and $\langle \mcalA_\xi\mid \xi<\mu \rangle$ be a sequence of antichains in $\mbbP$. We can take a sequence
  $\langle \mcalA'_\xi \mid \xi<\lambda\rangle$
  of maximal antichains  in $\mbbP$ such that 
  if $\xi<\mu$ then $\mcalA_\xi \subseteq \mcalA'_\xi$.
  By the assumption, there exist $\zeta<\lambda$ and $q\leq_\mbbP p$ such that
  \begin{center}
  $|\{r\in \mcalA_\xi\mid r \parallel q\}|\leq |\{r\in \mcalA'_\xi\mid r \parallel q\}|
  \leq|\zeta|$
  \end{center}
  for all $\xi<\mu$.

  \vskip\baselineskip
  
  $(2)\Rightarrow(1)$ : Let $p\in \mbbP$ and  $\dot{f}$ be a $\mbbP$-name for a function with $p \Vdash_\mbbP \dot{f} : \mu \rightarrow \lambda$.
  It suffices to prove that there is $q\leq_\mbbP p$ which forces that ran$(\dot{f})$ is bounded in $\lambda$.
  For $\xi<\mu$, let $\mcalA_\xi$ be an antichain below $p$ deciding the value of $\dot{f}(\xi)$.
  That is, for all $p'\in \mcalA_\xi$ there is $\eta<\lambda$ such that $p'\Vdash_{\mbbP} \dot{f}(\xi)=\eta$.
  For $p'\in \mcalA_\xi$, let $\eta^\xi_{p'}$ be $\eta<\lambda$ such that $p'\Vdash_{\mbbP} \dot{f}(\xi)=\eta$.
  By the weak ($\kappa,\lambda$)-c.c.,  there exists $q\leq_\mbbP p$ such that
  $|\{r\in \mcalA_\xi \mid r \parallel q\}|<\lambda$
  for all $\xi<\mu$.
  Then $\lambda_\xi =\sup \{\eta^\xi_{p'} \mid q \parallel p'\}$ is less than $\lambda$, and
  \begin{center}
  $q\Vdash_{\mbbP} \mathrm{ran}(\dot{f})\subseteq \bigcup_{\xi<\mu} \lambda_{\xi}$.
  \end{center}
  By the regularity of $\lambda$,
  we have $\bigcup_{\xi<\mu} \lambda_{\xi}<\lambda$.

  \vskip\baselineskip

  $(1)\Rightarrow(3)$ : Let $\langle \mcalA_\xi\mid \xi<\lambda\rangle$ be a sequence of maximal antichains  in $\mbbP$,
  $\lambda_\xi$ be the cardinarity of $\mcalA_\xi$,
  and $\{p^\xi_\eta \mid \eta<\lambda_\xi \}$ be an enumeration of $\mcalA_\xi$.
  Suppose that $p\in\mbbP$ and $C \subseteq \lambda$ is a club subset.
  Let $\dot{f}$ be a $\mbbP$-name of a function such that $\Vdash_\mbbP \dot{f} : \lambda \rightarrow \lambda$ and $p^\xi_\eta \Vdash \dot{f}(\xi)=\eta$. 
  Since $\Vdash_\mbbP ``\lambda$ is a regular cardinal and $C$ is club in $\lambda$",
  \begin{center}
  $\Vdash_\mbbP$ `` $\exists \zeta\in C$ $(\dot{f}[\zeta]\subseteq \zeta)$".
  \end{center}
  Hence there is $q\leq_\mbbP p$ and $\zeta\in C$ such that $q\Vdash_\mbbP \dot{f}[\zeta]\subseteq \zeta$.
  To see that $\{\eta<\lambda_\xi\mid q \parallel p^\xi_\eta\}\subseteq \zeta$ for all $\xi<\zeta$, we fix $\xi<\zeta$ and $\eta<\lambda_\xi$ such that $q \parallel p^\xi_\eta$.
  Since $p^\xi_\eta \Vdash_{\mbbP} \dot{f}(\xi) =\eta$ and $p^\xi_\eta \nVdash_{\mbbP} \dot{f}[\zeta]\nsubseteq \zeta$,
  it follows that $\eta<\zeta$.
 \end{proof}

 Next we introduce the factorization of towers, which plays important roles in the proof of Theorem \ref{univBaire} and Theorem \ref{maintheorem}.
 \begin{definition}
     Let $\mu<\lambda$ be inaccessible cardinals and $\mcalI$ be a tower of height $\lambda$.
     We say that $\mcalI$ \textit{factors} at $\mu$ if for all $X\in \mbbP_{\mcalI\restrict\mu}$ there is $Y \leq_{\mbbP_\mcalI}X$
     in $\mbbP_\mcalI$ such that $Y\Vdash_{\mbbP_\mcalI} ``\dot{G}\cap \mbbP_{\mcalI\restrict\mu}$ is $\mbbP_{\mcalI\restrict\mu}$-generic", where $\dot{G}$ is the canonical name of a $\mbbP_\mcalI$-generic filter.
 \end{definition}

The proof of Fact \ref{symext} can be found in Larson \cite{larson}.

 \begin{fact}\label{symext}
    Let $\lambda$ be an inaccessible cardinal, $\mbbP$ be a poset, and $G$ be a $\mbbP$-generic filter over $V$.
    Suppose that
          \begin{enumerate}
              \item $\lambda=\sup\{\omega_1^{V[r]}\mid r\in \mbbR^{V[G]}\}$.
              \item for all $r\in \mbbR^{V[G]}$ there is $\mbbP_r\in V_\lambda$ such that $r$ is $\mbbP_r$-generic over $V$.
          \end{enumerate}
    Then, in forcing extensions of $V[G]$ in which $(2^\lambda)^{V[G]}$ is countable, there is a $Coll(\omega, <\lambda)$-generic filter $g$ over $V$ such that $\mbbR^{V[G]} = \mbbR^{V[g]}$.
 \end{fact}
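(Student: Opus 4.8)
The plan is to realize $V[G]$'s reals as the reals of a genuine $Coll(\omega,<\lambda)$-extension of $V$ by building the collapse generic $g$ as an increasing union $g=\bigcup_n g_n$ of \emph{initial segments} each of which lives in $V[G]$. First I would pass to the extension $W$ of $V[G]$ in which $(2^\lambda)^{V[G]}$ is countable, so that $\lambda$ itself is countable in $W$. The decisive consequence of condition (1) is that already inside $V[G]$ every ordinal $\gamma<\lambda$, and indeed every set of $V$-cardinality $<\lambda$, is countable: by (1) there is $r\in\mbbR^{V[G]}$ with $\gamma<\omega_1^{V[r]}$, whence $\gamma$ is countable in $V[r]\subseteq V[G]$. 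Since $\lambda$ is inaccessible, for each $\gamma<\lambda$ the poset $Coll(\omega,<\gamma)$ has $V$-cardinality $<\lambda$ and carries $<\lambda$ maximal antichains in $V$; by the previous remark the collection of all dense subsets of $Coll(\omega,<\gamma)$ lying in any intermediate model of $V[G]$ of size $<\lambda$ is \emph{countable in $V[G]$}. Thus generics for such chunk-posets over such models can be manufactured by a Rasiowa--Sikorski construction carried out \emph{inside} $V[G]$. Working in $W$, I enumerate $\mbbR^{V[G]}=\{r_n\mid n<\omega\}$; by condition (2) I fix posets $\mbbP_{r_n}\in V_\lambda$ and generics $\bar h_n\in V[G]$ with $r_n\in V[\bar h_n]$, and I fix an enumeration $\lambda=\{\delta_n\mid n<\omega\}$.

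Next I would recursively construct an increasing sequence $\langle\gamma_n\mid n<\omega\rangle$ of ordinals below $\lambda$ and a coherent sequence $\langle g_n\mid n<\omega\rangle$ with $g_n\in V[G]$, each $g_n$ a $Coll(\omega,<\gamma_n)$-generic filter over $V$, satisfying $g_{n+1}\cap Coll(\omega,<\gamma_n)=g_n$, $\gamma_{n+1}>\delta_n$, and $r_n\in V[g_{n+1}]$. At stage $n$ I choose $\gamma_{n+1}>\max(\gamma_n,|\mbbP_{r_n}|,\delta_n)$ so large that, by Fact~\ref{absorb}, the chunk $Coll(\omega,[\gamma_n,\gamma_{n+1}))$ absorbs $\mbbP_{r_n}$ over $V$, i.e.\ $\mbbP_{r_n}\times Coll(\omega,[\gamma_n,\gamma_{n+1}))\simeq Coll(\omega,[\gamma_n,\gamma_{n+1}))$ via a dense embedding $e\in V$. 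Inside $V[G]$ I build a $Coll(\omega,[\gamma_n,\gamma_{n+1}))$-generic $t_n$ over the model $V[g_n][\bar h_n]$; then $\bar h_n\times t_n$ is product-generic over $V[g_n]$, and $e$ sends it to a $Coll(\omega,[\gamma_n,\gamma_{n+1}))$-generic over $V[g_n]$ whose extension contains $r_n\in V[\bar h_n]$. Setting $g_{n+1}=g_n\cup(\text{this chunk generic})$ and using that $Coll(\omega,<\gamma_{n+1})$ factors as $Coll(\omega,<\gamma_n)\times Coll(\omega,[\gamma_n,\gamma_{n+1}))$, I get that $g_{n+1}$ is $Coll(\omega,<\gamma_{n+1})$-generic over $V$, lies in $V[G]$, extends $g_n$, and captures $r_n$.

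Finally I would set $g=\bigcup_n g_n\in W$ and verify the three required properties. Genericity over $V$: by the $\lambda$-c.c.\ of $\colllambda$ (from inaccessibility) every maximal antichain of $\colllambda$ in $V$ has size $<\lambda$ and hence is contained in, and remains maximal in, some $Coll(\omega,<\gamma)$ with $\gamma<\lambda$; taking $n$ with $\gamma_n\ge\gamma$, it is met by $g_n$, so $g$ meets it. Moreover $\sup_n\gamma_n=\lambda$ because $\gamma_{n+1}>\delta_n$, so $g$ is generic for the \emph{full} collapse and not merely for an initial piece. The inclusion $\mbbR^{V[G]}\subseteq\mbbR^{V[g]}$ is immediate since $r_n\in V[g_{n+1}]\subseteq V[g]$. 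For $\mbbR^{V[g]}\subseteq\mbbR^{V[G]}$, any $s\in\mbbR^{V[g]}$ is, again by the $\lambda$-c.c., added by $g\cap Coll(\omega,<\gamma)$ for some $\gamma<\lambda$; choosing $m$ with $\gamma_m\ge\gamma$ gives $g\cap Coll(\omega,<\gamma)=g_m\cap Coll(\omega,<\gamma)\in V[G]$, whence $s\in V[g_m]\subseteq V[G]$. Thus $\mbbR^{V[g]}=\mbbR^{V[G]}$, as desired.

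The main obstacle is the capturing step, namely forcing $r_n\in V[g_{n+1}]$ while keeping each $g_{n+1}$ \emph{inside} $V[G]$. The danger is entanglement: the witnessing generic $\bar h_n$ supplied by condition (2) need not be mutually generic with the collapse generic $g_n$ already constructed, so one cannot directly feed $\bar h_n$ into the absorbing embedding $e$. I resolve this by arranging the chunk generic $t_n$ to be generic over $V[g_n][\bar h_n]$ rather than merely over $V[g_n]$; the symmetry of mutual genericity then makes $\bar h_n$ generic over $V[g_n]$ after the fact, legitimizing the use of $e$. What makes this whole inner construction executable inside $V[G]$, rather than only in the larger model $W$, is precisely condition (1), which collapses every relevant dense-set collection to countable \emph{within} $V[G]$; and condition (2) is exactly what renders each real $r_n$ absorbable into a collapse chunk in the first place. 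These two hypotheses therefore enter in genuinely different and essential ways.
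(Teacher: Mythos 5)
The paper gives no proof of this fact (it cites Larson), so I am judging your argument on its own terms. Your overall architecture is the standard one: build $g=\bigcup_n g_n$ as an increasing union of $Coll(\omega,<\gamma_n)$-generics over $V$ constructed \emph{inside} $V[G]$, with hypothesis (1) used to make the relevant families of dense sets countable in $V[G]$ (so that the Rasiowa--Sikorski construction can be run there and $V[g_n]\subseteq V[G]$), hypothesis (2) used to capture each real, and the $\lambda$-c.c.\ of $\colllambda$ used for the final verification; those outer parts are fine. The gap is in the one step you yourself flag as the main obstacle, and your resolution of it is incorrect. For $g_{n+1}=g_n\cup t$ to be $Coll(\omega,<\gamma_{n+1})$-generic over $V$, the chunk part $t$ must be $Coll(\omega,[\gamma_n,\gamma_{n+1}))$-generic over $V[g_n]$; you take $t$ to be generated by $e[\bar h_n\times t_n]$ with $e\in V$ a dense embedding, so this requires $\bar h_n\times t_n$ to be $\mbbP_{r_n}\times Coll(\omega,[\gamma_n,\gamma_{n+1}))$-generic over $V[g_n]$, which forces in particular that $\bar h_n$ be $\mbbP_{r_n}$-generic over $V[g_n]$. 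Hypothesis (2) only gives genericity of $\bar h_n$ over $V$. If $\bar h_n$ misses a dense set $D\in V[g_n]\setminus V$, then $\bar h_n\times t_n$ misses the dense set $D\times Coll(\omega,[\gamma_n,\gamma_{n+1}))\in V[g_n]$ no matter how $t_n$ is chosen: the product lemma converts genericity of one factor over the extension by the other into joint genericity only when both factors are already generic over the base model, and it cannot create genericity of $\bar h_n$ over $V[g_n]$ ``after the fact.'' So the filter $g_{n+1}$ you define need not be generic over $V$, and the induction breaks.

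The repair needs a genuine extra ingredient, namely an amalgamation step. One standard route: first produce, inside $V[G]$, a single $Coll(\omega,<\gamma_{n+1})$-generic $g'$ over $V$ with \emph{both} $g_n\in V[g']$ and $r_n\in V[g']$, and only then re-factor, in the spirit of Fact~\ref{collapsefactor}, to write $V[g']=V[g_n][t]$ with $t$ chunk-generic over $V[g_n]$; setting $g_{n+1}=g_n\cup t$ then gives a generic over $V$ extending $g_n$ with $V[g_{n+1}]=V[g']\ni r_n$. Producing $g'$ is where the entanglement is actually absorbed: one passes to the intermediate model $V[g_n\oplus\bar h_n]$, which by the intermediate-model theorem is a generic extension of $V$ by a forcing that can be taken in $V_\lambda$ (inaccessibility of $\lambda$ bounds its size), forces with $Coll(\omega,<\gamma_{n+1})$ over it inside $V[G]$ using hypothesis (1) again, and absorbs everything into $Coll(\omega,<\gamma_{n+1})$ by Fact~\ref{absorb}. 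Nothing in your write-up plays the role of this re-factoring. Two smaller points: Fact~\ref{absorb} as stated absorbs into $Coll(\omega,<\kappa)$, so applying it to the half-open chunk requires choosing $\gamma_{n+1}$ so that the chunk forces its own countability; and ``$V[g_n][\bar h_n]$'' is not a two-step forcing iteration when $\bar h_n$ is not generic over $V[g_n]$, so counting its dense sets also needs the intermediate-model theorem rather than the product lemma.
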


 \begin{cor}\label{addedreal}
     Let $\lambda$ be an inaccessible cardinal and
     $\mcalI$ be a tower of height $\lambda$ such that $\mbbP_{\mcalI\restrict\mu}\in V_\mu$ for all $\mu<\lambda$.
     Suppose that there are unboundedly many inaccessible cardinals $\mu<\lambda$ at which $\mcalI$ factors.
     Let $G$ be a $\PI$-generic filter over $V$
     and $j \colon V\rightarrow M$ be the ultrapower embedding by  $G$. 
     Suppose that $\mcalI$ is presaturated, $crit(j)=\omega_1$ and $j(\omega_1)=\lambda$.
     Then, in forcing extensions of $V[G]$ in which $(2^\lambda)^{V[G]}$ is countable,
     there is a $Coll(\omega,<\lambda)$-generic filter $g$ over $V$ such that $\mbbR^{V[G]}=\mbbR^{V[g]}$.
 \end{cor}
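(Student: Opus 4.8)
The plan is to deduce the statement from Fact~\ref{symext} applied to the poset $\PI$ with $\mbbP$-generic filter $G$; it then suffices to verify the two hypotheses of that fact, namely that $\lambda=\sup\{\omega_1^{V[r]}\mid r\in\mbbR^{V[G]}\}$ and that every real of $V[G]$ is generic over $V$ for some poset in $V_\lambda$. Throughout I use that $\mcalI$ is presaturated, so by Fact~\ref{wellfdd} the ultrapower $M$ is well-founded and closed under ${<}\lambda$-sequences in $V[G]$, together with the hypotheses $\mathrm{crit}(j)=\omega_1$ and $j(\omega_1)=\lambda$.

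For the first hypothesis I would first observe that $\omega_1^{V[G]}=\lambda$ and that every $\alpha<\lambda$ is countable in $V[G]$. Indeed, by elementarity $M\models$``$\lambda=j(\omega_1)$ is the least uncountable ordinal'', so each $\alpha<\lambda$ is countable in $M$, hence in $V[G]$ since $M\subseteq V[G]$; presaturation forces $\lambda$ to remain regular, so $\omega_1^{V[G]}=\lambda$. Then for each $\alpha<\lambda$ one picks $r\in\mbbR^{V[G]}$ coding a well-ordering of type $\alpha$, whence $\alpha<\omega_1^{V[r]}$; as also $\omega_1^{V[r]}\le\omega_1^{V[G]}=\lambda$ for every real $r$, taking the supremum gives the first hypothesis.

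The second hypothesis is where the work lies. Given $r\in\mbbR^{V[G]}$ with a name $\dot r$ and $p\in G$ forcing $\dot r\in\omega^\omega$, I would fix for each $n<\omega$ a maximal antichain $\mcalA_n$ below $p$ deciding $\dot r(n)$. Since $|\PI|=\lambda$ and $\Vdash_{\PI}|\alpha|\le\omega$ for all $\alpha\in[\omega_1,\lambda)$ by the previous paragraph, Lemma~\ref{chaincondition} (applied with $\omega_1=\omega^+$) turns presaturation into the weak $(\omega_1,\lambda)$-c.c. Applying it to the countable family $\langle\mcalA_n\mid n<\omega\rangle$ yields, densely below $p$, a condition meeting each $\mcalA_n$ in fewer than $\lambda$ pieces; by genericity I may take such a $q\in G$. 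As $\lambda$ is regular, the union of the sets $\{a\in\mcalA_n\mid a\parallel q\}$ together with $q$ has supports bounded by some $\mu_0<\lambda$. I then choose an inaccessible $\mu\ge\mu_0$ at which $\mcalI$ factors (such $\mu$ exist by hypothesis), so that $q\in\PImu$ and, below $q$, each value $\dot r(n)$ is decided by the unique member of $\mcalA_n\cap G$, which lies in $g_\mu:=G\cap\PImu$. Hence $r\in V[g_\mu]$, and $\PImu\in V_\mu\subseteq V_\lambda$.

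It remains to know that $g_\mu$ is genuinely $\PImu$-generic over $V$, and this is exactly the content of the factorization at $\mu$: it supplies conditions forcing ``$\dot G\cap\PImu$ is $\PImu$-generic'', and one must locate such a condition in $G$ (working below $q\in\PImu$, where the factorization is available). Granting this, $r$ is generic over $V$ for $\PImu$ in the sense required, since $\PImu\in V_\lambda$ and, as $\lambda$ is inaccessible, the complete subalgebra of the completion of $\PImu$ generated by the bits of $\dot r$ also lies in $V_\lambda$, with $r$ generic for it. Applying Fact~\ref{symext} then finishes the proof. I expect the genericity of the restricted filter $g_\mu$ to be the main obstacle: the weak chain condition only localizes the name of $r$ to a bounded support, and it is the factorization hypothesis that upgrades ``$\PImu$ captures $r$'' to ``$\PImu$ captures $r$ generically'', i.e.\ that $\PImu$ sits as a regular subforcing of $\PI$ at the chosen $\mu$.
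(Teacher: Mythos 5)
Your overall strategy --- verifying the two hypotheses of Fact \ref{symext} for $\PI$ --- is exactly the paper's, and your treatment of the first hypothesis matches the paper's argument. The gap is precisely where you yourself locate it: the genericity of $g_\mu=G\cap\PImu$ over $V$, and your order of quantifiers makes it unfixable as written. You first fix the real $r$, bound the supports of the relevant antichain elements by some $\mu_0$, and only then choose an inaccessible $\mu\geq\mu_0$ at which $\mcalI$ factors; you then need to find, in $G$, a condition forcing ``$\dot{G}\cap\PImu$ is $\PImu$-generic''. But for a \emph{fixed} $\mu$, factorization only supplies such conditions below each $X\in\PImu$: an arbitrary $Z\in\PI$ extending your $q$ may have support above $\mu$, and nothing guarantees a condition forcing genericity of $\dot{G}\cap\PImu$ below such a $Z$. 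So the set of conditions forcing genericity at that particular $\mu$ need not be dense (or even predense) in $\PI$, and you cannot conclude that $G$ meets it.

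The paper resolves this by reversing the quantifiers. For each $\alpha<\lambda$ it considers the set $\mcalE_\alpha$ of conditions forcing ``there \emph{exists} an inaccessible $\mu\in(\alpha,\lambda)$ with $\dot{G}\cap\PImu$ generic over $V$''. This set \emph{is} dense: given any $Z\in\PI$, choose a factoring inaccessible $\mu$ above both $\alpha$ and $\mathrm{supp}(Z)$, so that $Z\in\PImu$ and the definition of factorization applies to $Z$ itself. Genericity of $G$ then yields, in $V[G]$, an unbounded set $E$ of inaccessibles $\mu$ at which $G\cap\PImu$ actually is $\PImu$-generic; only afterwards, for a given real $r$, does one pick $\mu\in E$ above the supports of the countably many conditions in $G\cap\mcalA_n$ ($n<\omega$), which are bounded below $\lambda$ simply because $\lambda=\omega_1^{V[G]}$ has uncountable cofinality in $V[G]$ --- your detour through Lemma \ref{chaincondition} is not needed. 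If you restructure the second half of your argument in this order, the proof goes through.
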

 \begin{proof}
     We see that two conditions in Fact \ref{symext} are satisfied.
     We note that $\mbbR^{V[G]}=\mbbR^M$ 
     for the ultrapower $M$ by the presaturation and Fact \ref{wellfdd}.

     1. $[\leq]$ Let $\alpha<\lambda$. Since $M\models \lambda=\omega_1$,
     there is $r\in \mbbR^M$ which codes a bijection between $\omega$ and $\alpha$. For such $r$, $\alpha\leq{\omega_1}^{V[r]}$.

     $[\geq]$
     For $r\in\mbbR^{V[G]}$, ${\omega_1}^{V[r]} \leq {\omega_1}^M =\lambda$.
     
     2. For $\alpha\in[\omega_1,\lambda)$ let 
     \begin{center}
     $\mathcal{E}_\alpha =\{X\in \mbbP_{\mcalI} \mid X\Vdash_{\mbbP_{\mcalI}} $``$\exists\mu\in(\alpha,\lambda)$ 
      $(\mu$ is an inaccessible cardinal and
      
      $\dot{G}\cap \mbbP_{\mcalI\restrict \mu}$
     is $\PImu$-generic)"\}.
     \end{center}
     By the assumption, $\mathcal{E}_\alpha$ is dense for all $\alpha\in[\omega_1,\lambda)$, so the genericity implies that 
     \begin{center}
     $E=\{\mu<\lambda\mid \mu$ is inaccessible and $G\cap \PImu$ is $\PImu$-generic\}
     \end{center}
     is unbounded.
     Let $r\in \mbbR^{V[G]}$ and $\dot{r}$ be a $\PI$-name for $r$.
     For each $n<\omega$, there is a maximal antichain $\mcalA_n$ in $\PI$ deciding the value of $\dot{r}(n)$.
     Since $\lambda={\omega_1}^{V[G]}$,
     there is $\mu\in E$ such that $G\cap \mcalA_n \cap V_\mu \neq\emptyset$ for all $n<\omega$.
     Then $r\in V[G\cap\PImu]$, so $r$ is $\PImu$-generic over $V$.
 \end{proof}

The rest of this section is devoted to the relation between the presaturation and the factorization.

 \begin{lemma}\label{lambda}
    Let $\kappa$ be a successor cardinal and $\lambda$ be an inaccessible cardinal with $\kappa<\lambda$.
    If $\mcalI^\kappa_\lambda$ is presaturated, then
    for all $X\in \mbbQ^\kappa_\lambda$ and
    sequence 
    $\langle \mcalA_\xi\mid \xi<\lambda\rangle$
    of maximal antichains in $\mbbQ^\kappa_\lambda$
    there are stationary many $x\in\mcalP_\kappa(V_\lambda)$ such that 
    \begin{enumerate}
        \item $x\cap\bigcup X\in X$,
        \item for all $\xi\in\lambda\cap x$ there is $Y\in \mcalA_\xi \cap x$ such that $x\cap\bigcup Y\in Y$.
    \end{enumerate}
 \end{lemma}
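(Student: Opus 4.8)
The plan is to run a generic ultrapower argument for $\mbbQ^\kappa_\lambda$ combined with a reflection, obtaining a contradiction from the assumption that the desired set is non-stationary. Write $S$ for the set of $x\in\mcalP_\kappa(V_\lambda)$ satisfying (1) and (2); note $S\in V$. Suppose toward a contradiction that $S$ is non-stationary, so some club is disjoint from it, and by Fact~\ref{stationary} there is $F\colon[V_\lambda]^{<\omega}\to V_\lambda$ such that $C_F:=\{x\in\mcalP_\kappa(V_\lambda)\mid x\cap\kappa\in\kappa\land F[[x]^{<\omega}]\subseteq x\}$ is disjoint from $S$. Now force below $X$: let $G$ be $\mbbQ^\kappa_\lambda$-generic over $V$ with $X\in G$, and let $j\colon V\to M$ be the induced ultrapower embedding. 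By the presaturation of $\mcalI^\kappa_\lambda$, Fact~\ref{wellfdd} makes $M$ well-founded (identified with its transitive collapse) and closed under $<\lambda$-sequences in $V[G]$, Fact~\ref{presat} gives $\mathrm{crit}(j)=\kappa$ and $j(\kappa)=\lambda$, and by definition $\lambda$ is regular in $V[G]$.

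The key ingredient I would record next is the standard representation of conditions in the generic ultrapower: if $A\in\mbbQ^\kappa_\lambda$ has $\bigcup A=V_\beta$, then fineness gives $j''V_\beta\subseteq[\mathrm{id}]_{G^\beta}$ and normality gives the reverse inclusion, so $[\mathrm{id}]_{G^\beta}=j''V_\beta$ and hence $A\in G\iff j''V_\beta\in j(A)$; since $|V_\beta|<\lambda$ by inaccessibility, each $j''V_\beta$ is a genuine element of $M$. For each $\xi<\lambda$ let $Y_\xi$ be the unique element of $\mcalA_\xi\cap G$ and $\beta_\xi:=\mathrm{supp}(Y_\xi)<\lambda$; the map $\xi\mapsto\beta_\xi$ lies in $V[G]$. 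I would then compute, for any limit $\alpha\geq\mathrm{supp}(X)$ with $\alpha<\lambda$, the elementary-image identities $j''V_\alpha\cap j(\kappa)=\kappa\in j(\kappa)$, $\;j''V_\alpha\cap j(V_\beta)=j''V_\beta$ for $\beta\leq\alpha$, and $j(\lambda)\cap j''V_\alpha=j''\alpha$.

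Finally I would choose $\alpha$. Using regularity of $\lambda$ in $V[G]$, the set of limit points of the $V$-club $\{\alpha<\lambda\mid F[[V_\alpha]^{<\omega}]\subseteq V_\alpha\}$ is club in $V[G]$ and consists of $F$-closed ordinals, and the closure points of $\xi\mapsto\beta_\xi$ form a club in $V[G]$; intersecting, I pick a limit $\alpha<\lambda$ lying in both with $\alpha\geq\mathrm{supp}(X)$. Then $j''V_\alpha\in j(C_F)$, since $j''V_\alpha\cap j(\kappa)\in j(\kappa)$ and, as $\alpha$ is $F$-closed, $j''V_\alpha$ is closed under $j(F)$. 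At the same time $j''V_\alpha\in j(S)$: clause (1) holds because $j''V_\alpha\cap j(\bigcup X)=j''V_{\mathrm{supp}(X)}\in j(X)$ as $X\in G$; clause (2) holds because for each $\xi<\alpha$ one has $\beta_\xi<\alpha$, so $Y_\xi\in V_\alpha$, giving $j(Y_\xi)\in j(\mcalA_\xi)\cap j''V_\alpha$ with $j''V_\alpha\cap\bigcup j(Y_\xi)=j''V_{\beta_\xi}\in j(Y_\xi)$ because $Y_\xi\in G$. This contradicts $j(C_F)\cap j(S)=j(C_F\cap S)=\emptyset$, so $S$ is stationary.

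The main obstacle is that the obvious seed $j''V_\lambda$ is unavailable: since $M$ is only closed under $<\lambda$-sequences and $|j''V_\lambda|=\lambda$ in $V[G]$, one cannot feed $j''V_\lambda$ into $j(S)$ or $j(C_F)$. The crux of the proof is therefore to reflect down to a bounded level $\alpha<\lambda$, for which $j''V_\alpha\in M$ is guaranteed, while arranging $\alpha$ to be simultaneously closed under $F$ and under the $V[G]$-definable support function $\xi\mapsto\beta_\xi$; it is precisely here that regularity of $\lambda$ in $V[G]$, i.e. presaturation, is essential, both to form these clubs and to ensure their intersection is unbounded below $\lambda$.
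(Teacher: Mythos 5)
Your proof is correct, but it takes a genuinely different route from the paper's. The paper argues entirely in $V$, combinatorially: it invokes the club-guessing reformulation of presaturation (Lemma \ref{chaincondition}(3)) to find $Y\leq_{\mbbQ^\kappa_\lambda}X$ and a closure point $\zeta<\lambda$ with $\{\eta<\lambda_\xi\mid X^\xi_\eta\parallel Y\}\subseteq\zeta$ for all $\xi<\zeta$, deduces $Y\leq\bigtriangledown_{\eta<\zeta}X^\xi_\eta$ from maximality of each antichain, and then uses the sup/inf calculus of Lemma \ref{supinf} to exhibit the single stationary set $\bigtriangleup_{\xi<\zeta}\bigtriangledown_{\eta<\zeta}X^\xi_\eta$ compatible with $X$, whose suitably closed elements witness (1) and (2). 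You instead pass to the generic ultrapower $j\colon V\to M$ below $X$ and reflect via the seed $j''V_\alpha$ for a bounded $\alpha$ closed under $F$ and under the support function $\xi\mapsto\mathrm{supp}(Y_\xi)$ of the generic's choices from the antichains; your identification of the obstacle (the full seed $j''V_\lambda$ is unavailable, so one must reflect to $\alpha<\lambda$, and regularity of $\lambda$ in $V[G]$ is what makes the two clubs intersect) is exactly right, and the key identities $[\mathrm{id}]_{G^\beta}=j''V_\beta$ and $A\in G\iff j''\bigcup A\in j(A)$ are correctly justified by fineness and normality. The trade-off: your argument leans on the black-boxed Facts \ref{wellfdd} and \ref{presat} (closure of $M$ under $<\lambda$-sequences, $\mathrm{crit}(j)=\kappa$, $j(\kappa)=\lambda$), quoted from Foreman without proof, whereas the paper's argument is self-contained modulo Lemma \ref{chaincondition} and never leaves $V$; on the other hand, your reflection argument is the more standard and conceptual one and makes transparent exactly where presaturation (regularity of $\lambda$ in $V[G]$) enters. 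Both proofs use the successor hypothesis on $\kappa$ only through the respective presaturation facts, and both establish exactly the stated stationarity.
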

 \begin{proof}
    Let $X\in\mbbQ^\kappa_\lambda$ and $\langle \mcalA_\xi\mid \xi<\lambda\rangle$ be a sequence of maximal antichains in $\mbbQ^\kappa_\lambda$. 
    We show that for all function $f : [V_\lambda]^{<\omega}\rightarrow V_\lambda$ there is $x\in \mcalP_\kappa(V_\lambda)$ such that $x$ satisfies (1), (2), $x\cap \kappa\in\kappa$ and $f[{[x]}^{<\omega}]\subseteq x$.
    We enumerate $\mcalA_\xi=\{ X^\xi_\eta\mid \eta<\lambda_\xi\}$ for $\xi<\lambda$. 
    By (3) of Lemma \ref{chaincondition},
    there are $Y\leq_{\mbbQ^\kappa_\lambda} X$ and $\zeta<\lambda$ which satisfy the following conditions.
    \begin{enumerate}[(i)]
        \item $\{\eta<\lambda_\xi\mid X^\xi_\eta \parallel Y\}\subseteq \zeta$ for all $\xi<\zeta$,
        \item $\bigcup X^\xi_\eta \subseteq V_\zeta$ for all $\xi, \eta<\zeta$,
        \item $f[[V_\zeta]^{<\omega}]\subseteq V_\zeta$,
        \item $\bigcup X \subseteq V_\zeta$.
    \end{enumerate}
    We note that the set of $\zeta<\lambda$ satisfying $(\mathrm{ii}),(\mathrm{iii})$ and $(\mathrm{iv})$ is club in $\lambda$.
    Since $\mcalA_\xi$ is a maximal antichain, it follows that 
    \begin{center}
    $Y\leq \sup_{\eta<\zeta} X^\xi_\eta$ 
    \end{center}
    for all $\xi<\zeta$ from the first condition.
    Hence we obtain that 
    \begin{center}
    $\inf_{\xi<\zeta} \sup_{\eta<\zeta} X^\xi_\eta= \bigtriangleup_{\xi<\zeta} \bigtriangledown_{\eta<\zeta} X^\xi_\eta$
    \end{center}
    is stationary and compatible with $X$ in $\mbbQ^\kappa_\lambda$.
    So we can take $x\in \mcalP_\kappa(V_\zeta)$ such that 
    \begin{itemize}
        \item $x\cap\kappa\in\kappa$,
        \item $f[[x]^{<\omega}]\subseteq x$,
        \item $x\cap\bigcup X\in X$,
        \item for all $\xi\in \zeta\cap x$ there exists $\eta\in \zeta\cap x$ such that $x\cap\bigcup X^\xi_\eta \in X^\xi_\eta$,
        \item if $\eta, \xi \in x$ then $X^\xi_\eta \in x$.
    \end{itemize}
    Then this $x$ is as desired.
 \end{proof}

 \begin{lemma}\label{lambdaplus}
    Let $\kappa$ be a regular cardinal and $\lambda$ be an inaccessible cardinal with $\kappa<\lambda$.
     Suppose that for all $X\in \mbbQ^\kappa_\lambda$ and
    sequence of maximal antichains $\langle \mcalA_\xi\mid \xi<\lambda\rangle$ in $\mbbQ^\kappa_\lambda$,
    there are stationary many $x\in \mcalP_{\kappa}(V_{\lambda})$ such that
    \begin{enumerate}
        \item $x\cap \bigcup X \in X$,
        \item for all $\xi\in\lambda\cap x$ there is $Y\in \mcalA_{\xi} \cap x$ such that $x\cap \bigcup Y \in Y$.
    \end{enumerate}
    Then 
    there are stationary many $x\in \mcalP_\kappa(V_{\lambda+1})$ such that 
    \begin{enumerate}[(1')]
        \item $x\cap \bigcup X\in X$,
        \item for all maximal antichain $\mcalA\in x$ in $\mbbP$ there is $Y\in \mcalA\cap x$ with $x \cap \bigcup Y\in Y$.
    \end{enumerate}
 \end{lemma}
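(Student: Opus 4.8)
The plan is to verify stationarity in $\mcalP_\kappa(V_{\lambda+1})$ through Fact \ref{stationary}: it suffices, given an arbitrary $F\colon [V_{\lambda+1}]^{<\omega}\to V_{\lambda+1}$, to produce a single $x\in\mcalP_\kappa(V_{\lambda+1})$ with $x\cap\kappa\in\kappa$, $F[[x]^{<\omega}]\subseteq x$, satisfying (1$'$) and (2$'$). I would realize such an $x$ as $N\cap V_{\lambda+1}$ for an elementary submodel $N\prec H_\theta$ ($\theta$ large and regular) with $X,F,\mbbQ^\kappa_\lambda,\lambda\in N$, $|N|<\kappa$ and $N\cap\kappa\in\kappa$. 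The first key observation is that (2$'$) is really a statement about $\bar x:=N\cap V_\lambda$: every condition $Y\in\mbbQ^\kappa_\lambda$ lies in $V_\lambda$, and $\mathrm{supp}(Y)\in N\cap\lambda$ whenever $Y\in N$, so $\mcalA\cap N=\mcalA\cap\bar x$ for every $\mcalA\in N$ and $x\cap\bigcup Y=\bar x\cap\bigcup Y$; thus ``$x$ catches $\mcalA$'' (that is, $\exists Y\in\mcalA\cap x$ with $x\cap\bigcup Y\in Y$) is equivalent to the same assertion with $\bar x$ in place of $x$. The second observation is that, although there may be up to $2^\lambda$ maximal antichains in total, $N$ contains at most $|N|<\kappa\le\lambda$ of them; enumerating these as $\langle\mcalA^i\mid i<\rho\rangle$ with $\rho<\kappa$ produces exactly the sort of sequence to which the hypothesis applies.

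Having fixed such an enumeration, I would feed the hypothesis the sequence $\langle\mcalA^i\mid i<\rho\rangle$ (padded to length $\lambda$ by repeating $\mcalA^0$) together with the condition $X$. The hypothesis then yields stationarily many $\bar y\in\mcalP_\kappa(V_\lambda)$ with $\bar y\cap\bigcup X\in X$ catching $\mcalA^i$ for every $i\in\bar y\cap\lambda$; since $\rho<\kappa$ one may demand $\bar y\cap\kappa>\rho$, and then $\bar y$ catches all of the $\mcalA^i$ simultaneously, i.e.\ catches every maximal antichain in $N$. For the genuinely \emph{bounded} antichains $\mcalA\in N$ (those with $\sup_{Y\in\mcalA}\mathrm{supp}(Y)<\lambda$, equivalently $\mcalA\in V_\lambda$) this is close to automatic: Lemma \ref{supinf} identifies $\bigtriangledown_{Y\in\mcalA}Y$ with $\sup\mcalA$, which is the maximum condition as $\mcalA$ is maximal, so catching such an $\mcalA$ holds on a club, and these can be combined over the $<\kappa$ many of them by $\kappa$-completeness and normality of the club filter. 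The real work of the hypothesis is to secure catching for the \emph{unbounded} antichains as well, uniformly and below $X$.

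The main obstacle is precisely this last point, and it has two intertwined facets. First, the enumeration $\langle\mcalA^i\mid i<\rho\rangle$ depends on $N$, whereas the hypothesis returns a catcher $\bar y$ that is a priori a \emph{different} model; what (2$'$) demands is that $N$'s own trace $\bar x=N\cap V_\lambda$ be a catcher (self-genericity). Second, for an unbounded maximal antichain $\mcalA\in N$ of size up to $\lambda$, catching is not a club condition: ``catching $\mcalA$ is stationary below every condition'', which is all the raw hypothesis delivers, does not by itself upgrade to membership in the club filter, so a naive diagonal intersection over all $\mcalA\in x$ fails for these. I expect to overcome both by \emph{not} applying the hypothesis once to a fixed $N$, but by building $N$ as a continuous increasing union $\bigcup_{n<\omega}N_n$ (or along a chain of length $\mathrm{cf}(\delta)$, $\delta=\sup(N\cap\lambda)$) in which, at each step, the hypothesis is invoked for the $<\kappa$ maximal antichains already present in $N_n$ and a catcher together with witnessing conditions of support approaching $\delta$ is absorbed into $N_{n+1}$. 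The delicate verification is that catching is \emph{preserved in the limit}: a witness $Y$ for $\mcalA\in N_n$ must satisfy $\bar x\cap\bigcup Y\in Y$ for the final $\bar x$, not merely for an intermediate trace, which forces one to control $\bar x\cap V_{\mathrm{supp}(Y)}$ and hence to use the cofinality of $\delta$ together with the closure and well-foundedness of the generic ultrapower furnished by presaturation (Fact \ref{wellfdd}) via the hypothesis. Once such a good $\bar x$, and with it $x=N\cap V_{\lambda+1}$, is obtained, intersecting with the stationary set $\{x\mid x\cap\bigcup X\in X\}$ secures (1$'$) and completes the argument.
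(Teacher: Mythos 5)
There is a genuine gap, and it stems from a misreading of what the hypothesis already gives you. You fix a small model $N\prec H_\theta$ with $|N|<\kappa$ in advance, enumerate the $<\kappa$ many maximal antichains in $N$, and try to arrange that \emph{all} of them are caught by $\bar x=N\cap V_\lambda$ itself. As you correctly identify, the hypothesis only hands you \emph{some} stationary set of catchers $\bar y$, not that your pre-chosen $N$ is one of them, and your proposed repair --- a chain construction absorbing catchers and their witnesses at each stage --- is not carried out and does not go through as sketched: preserving $x\cap\bigcup Y=\bar y\cap\bigcup Y$ in the limit requires that later stages end-extend earlier ones below $\mathrm{supp}(Y)$, which is a semi-properness-type property nowhere supplied by the hypothesis of this lemma; and your appeal to presaturation and Fact \ref{wellfdd} is illegitimate here, since the lemma is a purely combinatorial implication whose hypothesis mentions no presaturation at all.

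The paper's proof avoids the self-genericity problem entirely by noticing that condition (2) of the hypothesis is \emph{already} a self-genericity statement: it asks $x$ to catch only those antichains $\mcalA_\xi$ whose \emph{index} $\xi$ lies in $x$. So instead of a small $N$, take a single set $M$ with $V_\lambda\subseteq M\subseteq V_{\lambda+1}$, $|M|=\lambda$, closed under the given $f$, enumerate \emph{all} maximal antichains lying in $M$ in a length-$\lambda$ sequence $\langle\mcalA_\xi\mid\xi<\lambda\rangle$, and apply the hypothesis once. Among the resulting stationarily many $x\in\mcalP_\kappa(M)$ one may additionally demand closure under $f$ and under the map $\mcalA_\xi\mapsto\xi$; then any maximal antichain $\mcalA\in x\subseteq M$ equals some $\mcalA_\xi$ with $\xi\in x\cap\lambda$, and (2) of the hypothesis hands you the witness directly, giving $(2')$. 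Your reduction of $(2')$ to a statement about $x\cap V_\lambda$ is correct and is implicitly used in this step, but the rest of your architecture --- the $<\kappa$-sized model, the padding, the simultaneous catching of a fixed finite-in-advance list, and the chain construction --- should be discarded in favor of the length-$\lambda$ enumeration over $M$ and the index-closure trick.
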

 \begin{proof}
    We show that for all $f : [V_{\lambda+1}]^{<\omega} \rightarrow V_{\lambda+1}$ there is  $x\in \mcalP_\kappa(V_{\lambda+1})$ such that $x$ satisfies (1'), (2'), $x\cap\kappa\in\kappa$ and $f[[x]^{<\omega}]\subseteq x$.
    Take a set $M$ such that $V_\lambda\subseteq M\subseteq V_{\lambda+1}$, $|M|=\lambda$,
    and $f[[M]^{<\omega}]\subseteq M$.
    We enumerate maximal antichains in $\mbbQ^\kappa_\lambda$ in $M$ as $\langle \mcalA_\xi \mid \xi<\lambda\rangle$.
    By the assumption,
    there are stationary many $x\in \mcalP_\kappa(M)$ such that
    \begin{itemize}
        \item $x\cap \bigcup X\in X$,
        \item for all $\xi\in\lambda\cap x$ there is $Y\in\mcalA_\xi \cap x$ such that $x\cap\bigcup Y\in Y$. 
    \end{itemize}
     So we can take $x\in \mcalP_\kappa(M)$ such that
    \begin{itemize}
    \item $x\cap\kappa\in\kappa$, 
        \item $f[[x]^{<\omega}]\subseteq x$,
        \item $x\cap\bigcup X\in X$,
        \item for all $\xi\in \lambda\cap x$ there exists $Y\in \mcalA_\xi\cap x$ such that $x\cap\bigcup Y \in Y$,
        \item if $\mcalA_\xi\in x$ then $\xi\in x$.
    \end{itemize}

    Since $x\subseteq M$, for all maximal antichain $\mcalA$ in $x$ there is $\xi <\lambda$ such that $\mcalA=\mcalA_\xi$, and then $\xi \in x$. Thus, there is $Y\in \mcalA_\xi\cap x$ such that $x\cap\bigcup Y\in Y$ by (2) of the assumption.
    That is, $x$ satisfies the condition $(2')$. \end{proof}

 \begin{lemma}\label{factoratmu}
     Let $\lambda$ be an inaccessible cardinal,
     $\kappa\leq\mu<\lambda$ be regular cardinals,
     and
     $Y$ is the set of $x\in\mcalP_\kappa(V_{\mu+1})$ such that for all maximal antichains $\mcalA\in x$ in $\mbbQ^\kappa_\mu$ there is $X\in \mcalA\cap x$ such that $x\cap\bigcup X\in X$.
     Suppose that
     $Y$
     is stationary.
     Then, 
     $Y\Vdash_{\mbbQ^\kappa_\lambda} ``\dot{G}\cap \mbbQ^\kappa_\lambda$ is $\mbbQ^\kappa_\mu$-generic",
     where $\dot{G}$ is the canonical name for a $\mbbQ^\kappa_\lambda$-generic filter.
 \end{lemma}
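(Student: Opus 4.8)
The plan is to sidestep the generic ultrapower and verify genericity directly, by a density argument driven by the normality of the ideals $NS_{\kappa,\beta}$. First I would record that $\mbbQ^\kappa_\mu$ is a suborder of $\mbbQ^\kappa_\lambda$: the two orders agree by Remark~\ref{liftup}, and by Lemma~\ref{supinf} the infimum of two conditions of support $<\mu$ again has support $<\mu$. Consequently $\dot G\cap\mbbQ^\kappa_\mu$ is forced to be a filter on $\mbbQ^\kappa_\mu$ (upward closure is immediate, and directedness holds because $G$ contains the infimum of any two of its members), so it suffices to show that whenever $G$ is $\mbbQ^\kappa_\lambda$-generic with $Y\in G$ and $\mcalA\in V$ is a maximal antichain of $\mbbQ^\kappa_\mu$, the filter $G\cap\mbbQ^\kappa_\mu$ meets $\mcalA$. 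By upward closure of $G$ this in turn reduces to proving that $D_\mcalA=\{W\in\mbbQ^\kappa_\lambda\mid\exists X\in\mcalA\ (W\leq_{\mbbQ^\kappa_\lambda}X)\}$ is dense below $Y$.

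So fix a maximal antichain $\mcalA$ of $\mbbQ^\kappa_\mu$. Each condition of $\mbbQ^\kappa_\mu$ is a subset of some $\mcalP_\kappa(V_\alpha)$ with $\alpha<\mu$, hence $\mcalA\subseteq V_\mu$ and $\mcalA\in V_{\mu+1}$, so $\mcalA$ is a legitimate value of the quantified antichain in the definition of $Y$. Let $W\leq_{\mbbQ^\kappa_\lambda}Y$ be arbitrary; by Remark~\ref{liftup} I may assume $W$ is stationary in $\mcalP_\kappa(V_\beta)$ for some $\beta\geq\mu+1$. Since $W\leq Y$, the set $\{x\mid x\cap V_{\mu+1}\in Y\}$ contains $W$ modulo $NS_{\kappa,\beta}$, while $\{x\mid\mcalA\in x\}$ is club (as $\mcalA\in V_\beta$); hence
\[
W'=\{x\in W\mid x\cap V_{\mu+1}\in Y\ \text{and}\ \mcalA\in x\}
\]
is stationary and $W'\leq_{\mbbQ^\kappa_\lambda}W$. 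For each $x\in W'$ we have $\mcalA\in x\cap V_{\mu+1}$ and $x\cap V_{\mu+1}\in Y$, so the defining clause of $Y$, instantiated at $\mcalA$, yields some $X\in\mcalA\cap x$ with $(x\cap V_{\mu+1})\cap\bigcup X\in X$; since $\bigcup X\subseteq V_\mu$ this reads $x\cap\bigcup X\in X$. Fix such a witness $h(x)\in\mcalA\cap x$.

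Next I would apply normality. The function $h\colon W'\to V_\beta$ satisfies $h(x)\in x$ on the stationary set $W'$, so the normality of $NS_{\kappa,\beta}$ provides a single $X_0\in\mcalA$ for which $W''=\{x\in W'\mid h(x)=X_0\}$ is stationary. Every $x\in W''$ satisfies $x\cap\bigcup X_0\in X_0$, i.e.\ $W''\subseteq\pi^{-1}_{\beta,\mathrm{supp}(X_0)}(X_0)$, which is exactly $W''\leq_{\mbbQ^\kappa_\lambda}X_0$. As $W''\leq W$ and $X_0\in\mcalA$, we have found a condition in $D_\mcalA$ below $W$; thus $D_\mcalA$ is dense below $Y$. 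Hence $G$ meets $D_\mcalA$, some $X_0\in\mcalA$ enters $G\cap\mbbQ^\kappa_\mu$, and since $\mcalA$ was arbitrary we conclude $Y\Vdash_{\mbbQ^\kappa_\lambda}$``$\dot G\cap\mbbQ^\kappa_\mu$ is $\mbbQ^\kappa_\mu$-generic over $V$''.

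The crux is the normality step: one has to read the defining clause of $Y$ as precisely a normality-type reflection, so that the $x$-dependent witnesses $h(x)\in\mcalA\cap x$ can be collapsed to a single antichain element $X_0$ below which $W$ is refined. The supporting points are to force $\mcalA\in x$ via the club $\{x\mid\mcalA\in x\}$ so that the clause of $Y$ is applicable to $\mcalA$, and to check that $(x\cap V_{\mu+1})\cap\bigcup X$ and $x\cap\bigcup X$ agree so that the witness genuinely certifies membership in $X$ and hence $W''\leq X_0$.
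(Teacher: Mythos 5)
Your proof is correct and follows essentially the same route as the paper's: both restrict to the club $\{x\mid\mcalA\in x\}$ and then press down the witness function $x\mapsto X\in\mcalA\cap x$ supplied by the defining clause of $Y$ to obtain a single $X_0\in\mcalA$ that lands in the generic filter. The only difference is presentational — you unfold the normality step as an explicit density argument for $D_\mcalA$ below $Y$ carried out in $V$, whereas the paper applies normality directly to the generic filter; these are dual formulations of the same pressing-down argument.
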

\begin{proof}
    Let $G$ be a $\mbbQ^\kappa_\lambda$-generic filter over $V$ with $Y\in G$ and $\mcalA$ be a maximal antichain in $\mbbQ^\kappa_\mu$.
    We show that $G\cap \mcalA\neq \emptyset$.
    Let $Y_{\mcalA}=\{x\in a\mid \mcalA\in x\}$.
    Since a club set $\{x\in\mcalP_\kappa(V_{\mu+1})\mid \mcalA\in x\}$ is in $G$,
    $Y_\mcalA \in G$.
    We define a function $f \colon Y_\mcalA\rightarrow A$ by $f(x)=X$ such that $X\in \mcalA\cap x$ and $x\cap\bigcup X\in X$.
    By the normality and the genericity, we obtain $X\in \mcalA$ such that 
    \begin{center}
     $Y_{\mcalA,X}=\{x\in Y_\mcalA\mid f(x)=X\}\in G$.
    \end{center}
    Then $X\in G$, as $Y_{\mcalA,X}$.
\end{proof}

From previous lemmas, we obtain the following theorem.

 \begin{theorem}\label{imply}
     Let $\kappa$ be a successor cardinal
     and
     $\mu$, $\lambda$ be inaccessible cardinals with $\kappa<\mu<\lambda$.
     If $\mcalI^\kappa_ \mu$ is presaturated, then $\mathcal{I}^\kappa_\lambda$ factors at $\mu$.
 \end{theorem}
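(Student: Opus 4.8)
The plan is to chain the three preceding lemmas, reading each of them at the smaller height $\mu$ and then transferring the resulting stationary set into the taller forcing $\mbbQ^\kappa_\lambda$. Fix an arbitrary condition $X\in\mbbQ^\kappa_\mu=\mbbP_{\mcalI^\kappa_\lambda\restrict\mu}$; by the definition of factoring it suffices to produce a single $S\leq_{\mbbQ^\kappa_\lambda}X$ forcing that $\dot{G}\cap\mbbQ^\kappa_\mu$ is $\mbbQ^\kappa_\mu$-generic.

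First I would apply Lemma \ref{lambda} with $\mu$ playing the role of $\lambda$. This is legitimate because $\kappa$ is a successor cardinal and $\mcalI^\kappa_\mu$ is presaturated by hypothesis; it yields, for the fixed $X$ together with any prescribed sequence $\langle\mcalA_\xi\mid\xi<\mu\rangle$ of maximal antichains in $\mbbQ^\kappa_\mu$, stationarily many $x\in\mcalP_\kappa(V_\mu)$ meeting conditions (1) and (2) of that lemma. Since this holds for every such sequence, it is exactly the hypothesis of Lemma \ref{lambdaplus}, which I would invoke next (again with $\mu$ in place of $\lambda$, using only that $\kappa$ is regular and $\mu$ is inaccessible). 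It upgrades the conclusion to a stationary set
\begin{center}
$S=\{x\in\mcalP_\kappa(V_{\mu+1})\mid x\cap\bigcup X\in X \text{ and } x \text{ meets every maximal antichain } \mcalA\in x \text{ of } \mbbQ^\kappa_\mu\}$,
\end{center}
where ``$x$ meets $\mcalA$'' abbreviates the existence of $Y\in\mcalA\cap x$ with $x\cap\bigcup Y\in Y$.

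It then remains to see that this single stationary set $S$ does all the work. Writing $a:=\mathrm{supp}(X)<\mu$, a stationary $X\subseteq\mcalP_\kappa(V_a)$ has $\bigcup X=V_a$ (every $v\in V_a$ lies in some member of $X$, since $\{x\mid v\in x\}$ is club), so the first clause defining $S$ reads $\pi_{\mu+1,a}(x)=x\cap V_a\in X$, i.e.\ $S\subseteq\pi^{-1}_{\mu+1,a}(X)$. Computing the order of $\mbbQ^\kappa_\lambda$ at level $c=\mu+1$ (Remark \ref{liftup}), the difference $\pi^{-1}_{\mu+1,\mu+1}(S)\setminus\pi^{-1}_{\mu+1,a}(X)=S\setminus\pi^{-1}_{\mu+1,a}(X)$ is empty, hence non-stationary, so $S\leq_{\mbbQ^\kappa_\lambda}X$. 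On the other hand $S$ is contained in the set $Y$ of Lemma \ref{factoratmu}, whose sole defining clause is the antichain condition; hence $Y$ is stationary, and Lemma \ref{factoratmu} applied at the genuine height $\lambda$ gives $Y\Vdash_{\mbbQ^\kappa_\lambda} ``\dot{G}\cap\mbbQ^\kappa_\mu$ is $\mbbQ^\kappa_\mu$-generic''. Because $S\subseteq Y$ forces $S\leq_{\mbbQ^\kappa_\lambda}Y$, the same statement is forced by $S$, so $S$ witnesses that $\mcalI^\kappa_\lambda$ factors at $\mu$.

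The only genuinely delicate point, and the one I would verify most carefully, is the level bookkeeping: the stationary witness is manufactured inside the height-$\mu$ machinery (Lemmas \ref{lambda} and \ref{lambdaplus}) but must be read as a condition of the height-$\lambda$ forcing $\mbbQ^\kappa_\lambda$. One must confirm both that the role-reversal $\lambda\mapsto\mu$ in the hypotheses of those two lemmas is admissible under the standing assumptions, and that the projection computation genuinely places $S$ below $X$ in $\mbbQ^\kappa_\lambda$. Everything else is a direct concatenation of the cited lemmas, with no new combinatorics required.
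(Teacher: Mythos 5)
Your proposal is correct and follows essentially the same route as the paper: apply Lemma \ref{lambda} and Lemma \ref{lambdaplus} with $\mu$ in place of $\lambda$ and feed the resulting stationary subset of $\mcalP_\kappa(V_{\mu+1})$ into Lemma \ref{factoratmu}. The only difference is that you spell out the bookkeeping the paper leaves implicit (that clause (1') makes the witness $S$ lie below $X$ in $\mbbQ^\kappa_\lambda$, and that $S$ refines the set $Y$ of Lemma \ref{factoratmu}), and those verifications are accurate.
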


 \begin{proof}
     Applying Lemma \ref{lambda} and \ref{lambdaplus} with $\lambda$ replaced by $\mu$,
     we obtain the stationary set in the assumption of Lemma \ref{factoratmu}.
 \end{proof}

 \section{Woodin cardinals imply the presaturation}
 In this section, we show that the assumption of Theorem \ref{univBaire} implies the assumption of Theorem \ref{maintheorem}.
 Indeed, we prove that if $\lambda$ is a Woodin cardinal and $\kappa<\lambda$ is a successor cardinal then $\mcalI^\kappa_\lambda$ is presaturated.

 \begin{definition}
     Let $M$ and $N$ be sets.
     $N$ \textit{end-extends} $M$ if $M\subseteq N$ and there is an ordinal $\beta$ such that $M\subseteq V_\beta$ and $N\cap V_\beta =M$.
 \end{definition}

\begin{definition}
    Let $\kappa$ be a regular cardinal, $\lambda$ be an inaccessible cardinal with $\kappa<\lambda$ and
    $\mcalD\subseteq \mbbQ^\kappa_\lambda$.
     Let $\mathsf{sp}^\kappa_\lambda (\mcalD)$ be the set of all $M\in \mcalP_\kappa(V_{\lambda+1})$ such that $\mcalD\in M\prec V_{\lambda+1}$ and there is $N\in\mcalP_\kappa(V_{\lambda+1})$ with the following properties.

     \begin{itemize}
         \item $M\subseteq N\prec V_{\lambda+1}$,
         \item $N\cap V_\lambda$ end-extends $M\cap V_\lambda$,
         \item there is $X\in \mcalD\cap N$ such that $N\cap\bigcup X\in X$.
     \end{itemize}
     We say $\mcalD\subseteq \mbbQ^\kappa_\lambda$ is \textit{semi-proper} if $\mathsf{sp}^\kappa_\lambda (\mcalD)$ contains a club set in $\mcalP_\kappa(V_{\lambda+1})$.
\end{definition}

Fact \ref{woodincard} in the case that $\kappa=\omega_1$ is Lemma 2.5.6. in \cite{larson},
and the same proof works for regular cardinals in general.

\begin{fact}\label{woodincard}
    Let $\kappa$ be a regular cardinal, $\lambda$ be a Woodin cardinal with $\kappa<\lambda$
    and $\langle \mcalA_\alpha\mid \alpha<\lambda\rangle$ be a sequence of maximal antichains in $\mbbQ^\kappa_\lambda$.
    Then there is an inaccessible cardinal $\gamma<\lambda$ such that $\mcalA_\alpha \cap \mbbQ^\kappa_\gamma$ is a maximal antichain in $\mbbQ^\kappa_\gamma$ and semi-proper for all $\alpha<\gamma$.
    Moreover, such inaccessible cardinals exist unboundedly in $\lambda$.
\end{fact}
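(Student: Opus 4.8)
The plan is to reflect the whole sequence $\langle \mcalA_\alpha \mid \alpha<\lambda\rangle$ off the Woodinness of $\lambda$. First I would fix a set $A\subseteq V_\lambda$ coding $\{(\alpha,X)\mid \alpha<\lambda,\ X\in\mcalA_\alpha\}$, arranged so that for inaccessible $\delta<\lambda$ the restriction $A\cap V_\delta$ codes $\langle \mcalA_\alpha\cap V_\delta\mid \alpha<\delta\rangle$. By the usual reflection characterization of Woodinness, there are stationarily many inaccessible $\gamma<\lambda$ with $\kappa<\gamma$ that are $A$-reflecting, in the sense that for every $\eta<\lambda$ there is an elementary $j\colon V\to M$ with $\mathrm{crit}(j)=\gamma$, $j(\gamma)>\eta$, $V_\eta\subseteq M$ and $j(A)\cap V_\eta=A\cap V_\eta$. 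I would fix such a $\gamma$ together with one such $j$ for an inaccessible $\eta$ with $\gamma<\eta<\lambda$. Since $V_\eta\subseteq M$ and $\gamma$ is inaccessible, stationarity of subsets of $\mcalP_\kappa(V_\beta)$ for $\beta<\gamma$ is absolute between $V$ and $M$, so $\mbbQ^\kappa_\gamma$ is computed identically in $V$ and $M$, and $V_{\gamma+1}\subseteq M$.

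The maximality of the restrictions is the easy half and needs no embedding. For each fixed $\alpha$, choose for every $Z\in\mbbQ^\kappa_\lambda$ a condition $X_Z\in\mcalA_\alpha$ with $X_Z\parallel Z$ and set $g(Z)=\mathrm{supp}(X_Z)$; the set of inaccessible $\gamma$ closed under $g$ is club. For such $\gamma$ every $Z\in\mbbQ^\kappa_\gamma$ is compatible with some $X_Z\in\mcalA_\alpha\cap\mbbQ^\kappa_\gamma$, and since compatibility and incompatibility of conditions of support below $\gamma$ are absolute to the subtower (the relevant meets, by Lemma \ref{supinf}, again have support below $\gamma$), the set $\mcalA_\alpha\cap\mbbQ^\kappa_\gamma$ is a maximal antichain in $\mbbQ^\kappa_\gamma$. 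Intersecting these clubs diagonally over $\alpha<\lambda$ gives a club $C$ such that for $\gamma\in C$ this holds for all $\alpha<\gamma$ at once; I would arrange the reflecting $\gamma$ above to lie in $C$ as well, which is possible on a stationary, hence unbounded, set.

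For the semi-properness I would argue inside $M$ and pull the witness back by elementarity. Fix $\alpha<\gamma$, write $\mcalD=\mcalA_\alpha\cap\mbbQ^\kappa_\gamma$, and take a typical $M_0\prec V_{\gamma+1}$ in $\mcalP_\kappa(V_{\gamma+1})$ with $\mcalD,\alpha,\gamma\in M_0$ and $M_0\cap\kappa\in\kappa$ (club-many such $M_0$). Since $|M_0|<\kappa<\gamma=\mathrm{crit}(j)$ we have $j(M_0)=j[M_0]\prec V^{M}_{j(\gamma)+1}$ with $j[M_0]\cap V_\gamma=M_0\cap V_\gamma$, and $j(\mcalD)=j(\mcalA_\alpha)\cap\mbbQ^{\kappa,M}_{j(\gamma)}$. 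The point of $j(A)\cap V_\eta=A\cap V_\eta$ is that $j(\mcalA_\alpha)$ and $\mcalA_\alpha$ contain exactly the same conditions of support in $[\gamma,\eta)$; these are genuine conditions of $\mcalA_\alpha$ at ranks $\geq\gamma$, and $M$ (which sees all of $V_\eta$) has them available to be caught. Using that $j(\mcalA_\alpha)$ is a maximal antichain in the $M$-tower, I would produce in $M$ an $N^*\prec V^{M}_{j(\gamma)+1}$ of size $<\kappa$ with $j[M_0]\subseteq N^*$, with $N^*\cap V^{M}_{j(\gamma)}$ end-extending $j[M_0]\cap V^{M}_{j(\gamma)}$, and with some $X\in j(\mcalA_\alpha)\cap N^*$ such that $N^*\cap\bigcup X\in X$; as $X$ has support $\geq\gamma$, catching it is compatible with freezing the part of $N^*$ below the ranks already present in $M_0$. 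By elementarity of $j$, the statement ``there is such an $(N^*,X)$ over $j[M_0]$ at level $j(\gamma)$'' reflects to the existence of the required $N$ over $M_0$ at level $\gamma$, so $M_0\in\mathsf{sp}^\kappa_\gamma(\mcalD)$.

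The main obstacle is exactly this construction of $N^*$: the end-extension clause forbids adding elements below the ranks already occurring in $M_0$, whereas naive Skolem closure would add new small-rank elements, so $N^*$ must be built by a careful catching argument that adjoins only witnesses of rank $\geq\gamma$. This is the genuinely Woodin-requiring step, and it is where the richness $V_\eta\subseteq M$ together with the normality and $\kappa$-completeness of the ideals $NS_{\kappa,\beta}$ (Fact \ref{NStower}) enters, through a pressing-down argument on $\mcalP_\kappa$. Finally I would observe that this is precisely Larson's argument for $\kappa=\omega_1$: every appeal there to ``$\omega_1$'' is in fact an appeal to the normality, fineness and $\kappa$-completeness of the nonstationary ideals and to the Skolem-function description of clubs in $\mcalP_\kappa$ (Fact \ref{stationary}), all of which hold for arbitrary regular $\kappa$, so the proof transfers verbatim; unboundedness of the good $\gamma$ is immediate from the stationarity of the reflecting points.
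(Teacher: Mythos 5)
First, a caveat on the comparison itself: the paper does not prove this Fact. It is quoted from Larson (Lemma 2.5.6, for $\kappa=\omega_1$) with the remark that the same proof works for arbitrary regular $\kappa$. So there is no in-paper argument to match your proposal against; I can only judge it as a reconstruction of that standard proof.

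Your architecture is right (coding the sequence into $A\subseteq V_\lambda$, taking $A$-reflecting inaccessible $\gamma$, and the club/diagonal-intersection argument for maximality of the restrictions, which is indeed the soft half and is correct), but the semi-properness half has a genuine hole, and you flag it yourself: ``I would produce in $M$ an $N^*$ \dots by a careful catching argument'' is the content of the lemma, not a step of its proof. Two ingredients are missing. First, for a single prescribed $M_0$ there is no reason any member of $j(\mcalA_\alpha)$ should be catchable over $M_0\cap V_\gamma$; maximality of an antichain only yields compatibility with a given \emph{condition} of the tower, so the putative counterexamples must first be packaged into one. The standard route is the contrapositive: assume the set $S$ of $M_0\prec V_{\gamma+1}$ with $\mcalD\in M_0$ and $M_0\notin\mathsf{sp}^\kappa_\gamma(\mcalD)$ is stationary, regard $S$ as a condition of $\mbbQ^\kappa_\lambda$ with support $\gamma+2$, use maximality of $\mcalA_\alpha$ in the \emph{full} tower to find $Y\in\mcalA_\alpha$ with $Y\parallel S$, and only then choose $\eta>\supp(Y)$ so that $j(A)\cap V_\eta=A\cap V_\eta$ places $Y$ in $j(\mcalA_\alpha)\cap\mbbQ^{\kappa}_{j(\gamma)}{}^M=j(\mcalD)$. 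Your sketch fixes $\eta$ before knowing which condition must be caught and never invokes maximality of $\mcalA_\alpha$ in $\mbbQ^\kappa_\lambda$ (as opposed to in $\mbbQ^\kappa_\gamma$), which is exactly where the hypothesis enters. Second, the construction of $N^*$ is not a pressing-down argument on $\mcalP_\kappa$: one picks $x\in S\wedge Y$ with enough closure, sets $N_0=j[x\cap V_{\gamma+1}]\in j(S)$, and takes the seed-generated hull $N^*=\{j(g)(x\cap\bigcup Y)\mid g\in x,\ g\colon\bigcup Y\rightarrow V_{\gamma+1}\}$, then verifies that $N^*\cap V_{j(\gamma)}$ end-extends $x\cap V_\gamma$ and that $N^*\cap\bigcup Y=x\cap\bigcup Y\in Y$ --- the same computation as the $N_{\alpha+1}$ step in the paper's proof of Theorem \ref{woodinimpliescc}, which is what prevents the hull from adding low-rank elements. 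Until those two pieces are supplied, the proposal is a correct frame around the one step that actually requires proof.
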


Using this fact, we show the presaturation of $\mcalI^\kappa_\lambda$.

 \begin{theorem}\label{woodinimpliescc}
     Let $\kappa$ be a regular cardinal and $\lambda$ be a Woodin cardinal with $\kappa<\lambda$.
     Then $\mcalI^\kappa_\lambda
     $ is presaturated.
 \end{theorem}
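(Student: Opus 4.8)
The plan is to establish presaturation directly, i.e.\ to show $\Vdash_{\mbbQ^\kappa_\lambda}``\lambda$ is a regular cardinal''. As $\lambda$ is a regular cardinal in $V$, it is enough to see that in the extension no $\mu<\lambda$ carries a map cofinal in $\lambda$. Concretely, fixing $X\in\mbbQ^\kappa_\lambda$, an ordinal $\mu<\lambda$, and a name $\dot f$ with $X\Vdash_{\mbbQ^\kappa_\lambda}\dot f\colon\mu\to\lambda$, I would produce a single $Y\leq_{\mbbQ^\kappa_\lambda}X$ forcing $\mathrm{ran}(\dot f)$ to be bounded below $\lambda$; since such $Y$ are then dense below every condition, this yields that $\lambda$ stays regular. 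For each $\xi<\mu$ fix a maximal antichain $\mcalA_\xi$ in $\mbbQ^\kappa_\lambda$ deciding $\dot f(\xi)$, with $Z\Vdash\dot f(\xi)=g_\xi(Z)$ for $Z\in\mcalA_\xi$, and extend $\langle\mcalA_\xi\mid\xi<\mu\rangle$ to a sequence $\langle\mcalA_\xi\mid\xi<\lambda\rangle$ of maximal antichains. By Fact~\ref{woodincard} there is an inaccessible $\gamma$ with $\mu<\gamma<\lambda$ and $\bigcup X\subseteq V_\gamma$ such that, for every $\alpha<\gamma$, the set $\mcalD_\alpha:=\mcalA_\alpha\cap\mbbQ^\kappa_\gamma$ is a maximal antichain in $\mbbQ^\kappa_\gamma$ and is semi-proper.

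The key reduction is that it suffices to find $Y\leq_{\mbbQ^\kappa_\lambda}X$ below which each $\mcalD_\xi$ $(\xi<\mu)$ is predense. Indeed, if $G\ni Y$ is generic, then $G$ meets the maximal antichain $\mcalA_\xi$ in a unique $Z$, while predensity of $\mcalD_\xi$ forces $G$ to meet $\mcalD_\xi\subseteq\mcalA_\xi$; since $\mcalA_\xi$ is an antichain these agree, so $Z\in\mcalD_\xi$ and $\dot f(\xi)^G=g_\xi(Z)\in\{g_\xi(W)\mid W\in\mcalD_\xi\}$. Because $\gamma$ is inaccessible we have $|\mbbQ^\kappa_\gamma|\leq\gamma$, so $\bigcup_{\xi<\mu}\{g_\xi(W)\mid W\in\mcalD_\xi\}$ is a set of at most $\gamma$ ordinals below $\lambda$, hence bounded by the regularity of $\lambda$ in $V$; thus $Y$ forces $\mathrm{ran}(\dot f)$ bounded, as required.

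To build such $Y$, I would prove the semi-proper analogue, at height $\gamma$, of the catching statement of Lemma~\ref{lambda}: there are stationarily many $x\in\mcalP_\kappa(V_{\gamma+1})$ with $x\cap\kappa\in\kappa$ and $x\cap\bigcup X\in X$ such that for every $\alpha\in\gamma\cap x$ there is $Z\in\mcalD_\alpha\cap x$ with $x\cap\bigcup Z\in Z$. Granting this, the set $Y$ of such $x$ is a condition with $Y\leq_{\mbbQ^\kappa_\lambda}X$, and for each fixed $\xi<\mu$ the normality argument of Lemma~\ref{factoratmu}, applied on the stationary set $\{x\in Y\mid\xi\in x\}\in G$, shows that $G$ meets $\mcalD_\xi$; thus each $\mcalD_\xi$ is predense below $Y$. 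Whereas Lemma~\ref{lambda} derives its catching set from the chain condition, here the input is instead the semi-properness of each $\mcalD_\alpha$ supplied by Fact~\ref{woodincard}: each $\mcalD_\alpha$ is caught by passing from an elementary $M\prec V_{\gamma+1}$ to an end-extension witnessing $\mathsf{sp}^\kappa_\gamma(\mcalD_\alpha)$.

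The main obstacle is the simultaneous catching of all the antichains $\mcalD_\alpha$ with $\alpha\in x$ inside a single model of size $<\kappa$. Semi-properness only catches one antichain per end-extension, so one must iterate, closing off a continuous $\in$-chain of elementary submodels until every relevant $\mcalD_\alpha$ appearing in the union is caught; keeping this chain of size $<\kappa$ while catching the $<\kappa$ antichains that eventually enter the model is the delicate point. The end-extension clause in the definition of $\mathsf{sp}^\kappa_\gamma$ is exactly what makes the iteration coherent: the witness $Z$ for an already-caught $\mcalD_\alpha$ lies below some $V_{\delta}$ with $\delta<\gamma$, and by performing later end-extensions above that rank one guarantees that $x\cap\bigcup Z$ is computed in the final model just as it was when $Z$ was found, so that no earlier witness is destroyed. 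Arranging the thresholds of the successive end-extensions to increase coherently, and verifying stationarity of the resulting set $Y$, is the crux of the argument.
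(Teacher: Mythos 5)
Your proposal is correct and runs on the same engine as the paper's proof --- Fact~\ref{woodincard} plus the iterated construction of elementary submodels that catch the restricted antichains $\mcalA_\alpha\cap\mbbQ^\kappa_\gamma$ via semi-properness and coherent end-extensions, followed by a normality/pressing-down argument --- but it packages the reduction differently. The paper does not argue the regularity of $\lambda$ directly: it verifies the weak $(\kappa,\lambda)$-c.c.\ and invokes Lemma~\ref{chaincondition}, so it only ever has to catch a \emph{fixed} sequence of $\mu<\kappa$ antichains, which is done with a single chain $\langle N_\alpha\mid\alpha<\mu\rangle$ of length $\mu$ (one application of semi-properness per step, no new tasks generated along the way); the boundedness count at the end is the observation that $\{Y\in\mcalA_\alpha\mid Y\parallel Z\}\subseteq\mcalA_\alpha\cap\mbbQ^\kappa_\gamma$, which has size $<\lambda$. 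You instead bound $\mathrm{ran}(\dot{f})$ for every name $\dot{f}\colon\mu\to\lambda$ with $\mu<\lambda$ arbitrary, which obliges you to prove the stronger, Lemma~\ref{lambda}-style closure statement (catch $\mcalD_\alpha$ for \emph{all} $\alpha\in\gamma\cap x$); that is where the bookkeeping you flag as the crux enters, since catching a witness introduces new ordinals indexing new antichains, and one must close off, e.g.\ by iterating the fixed-length catching construction $\omega$ further times, using that $\gamma$ is inaccessible so each model's trace on $V_\gamma$ is bounded in rank and the end-extension thresholds can be pushed up coherently --- this is doable and is essentially how the analogous closure set is obtained in Larson's treatment. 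Both routes work: the paper's is combinatorially lighter because the chain-condition characterization lets it dodge the closure step, while yours is more self-contained in that it never needs the hypothesis of Lemma~\ref{chaincondition} that every ordinal in $[\kappa,\lambda)$ is forced to have size at most $\mu<\kappa$, which is what licenses restricting attention to domains below $\kappa$.
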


 \begin{proof}
  By Lemma \ref{chaincondition},
  it suffices to prove that $\mbbQ^\kappa_\lambda$ has the weak $(\kappa,\lambda)$-c.c..
  Let $\mu<\kappa$ be a cardinal, $\langle \mcalA_i \mid i<\mu\rangle$ be a sequence of maximal antichains in $\mbbQ^\kappa_\lambda$ and $X\in \mbbQ^\kappa_\lambda$. We can take $\gamma>\mathrm{supp}(X)$ such as in Fact \ref{woodincard}.
\vskip0.5\baselineskip
  \textbf{Claim 1.}
  Let $N\prec V_\lambda$ with $\langle \mcalA_\alpha\cap \mbbQ^\kappa_\gamma\mid\alpha<\mu\rangle, \gamma\in N, |N|<\kappa$ and $N\cap\kappa\in\kappa$.
  Then, there is $N' \in\mcalP_\kappa(V_\lambda)$ such that 
  \begin{itemize}
      \item $N'\prec V_\lambda$,
      \item $N'\cap V_\gamma$ end-extends $N\cap V_\gamma$,
      \item for all $\alpha<\mu$ there is $Y\in\mcalA_\alpha\cap  \mbbQ^\kappa_\gamma \cap N'$ such that $N'\cap \bigcup Y\in Y$.
  \end{itemize}
\vskip0.5\baselineskip
  \textbf{Proof of Claim 1.}
  It suffices to construct $\langle N_\alpha\mid \alpha<\mu\rangle$ such that 
  \begin{itemize}
      \item $|N_\alpha|<\kappa$,
      \item $N\subseteq N_\alpha\prec V_\lambda$,
      \item if $\alpha<\beta<\mu$ then $N_\beta\cap V_\gamma$ end-extends $N_\alpha\cap V_\gamma$,
      \item there is $Y\in \mcalA_\alpha\cap\mbbQ^\kappa_\gamma \cap N_{\alpha+1}$ such that $N_{\alpha+1}\cap \bigcup Y\in Y$.
  \end{itemize}
  If there are such $N_\alpha$, 
  then $N'=\bigcup_{\alpha<\mu} N_\alpha$ is as desired.
  Note that $N'\cap V_\gamma$ end-extends $N_\alpha \cap V_\gamma$, so if $\alpha<\mu$ and $Y\in \mcalA_\alpha\cap \mbbQ^\kappa_\gamma \cap N_{\alpha+1}$ such that $N_{\alpha+1}\cap \bigcup Y\in Y$ then $N'\cap\bigcup Y =
  N_\alpha \cap\bigcup Y\in Y$.

   Let $N_0=N$.

   When $\langle N_\beta\mid \beta<\alpha\rangle$ have been constructed for a limit ordinal $\alpha$, we define $N_\alpha=\bigcup_{\beta<\alpha} N_\beta$.

  Suppose that $\langle N_\beta\mid\beta\leq\alpha\rangle$ has been constructed for $\alpha<\mu$.
  We define $N_{\alpha+1}$.
  For the construction, we claim that 
  there is $N^\ast\in \mcalP_\kappa(V_{\gamma+1})$ such that 
  \begin{itemize}
      \item $N^\ast\prec V_{\gamma+1}$,
      \item $N_\alpha\cap V_{\gamma+1} \subseteq N^\ast$,
      \item $N^\ast\cap V_\gamma$ end-extends $N_\alpha\cap V_\gamma$,
      \item there is $Y\in \mcalA_{\alpha}\cap\mbbQ^\kappa_\gamma\cap N^\ast$ such that $N^\ast\cap\bigcup Y \in Y$.
  \end{itemize}
  To see this, note that $N_\alpha \cap \kappa\in\kappa$.
  Since $\mcalA_\alpha\cap \mbbQ^\kappa_\gamma\in N_\alpha\prec V_\lambda$,
  $N_\alpha \cap V_{\gamma+1}\in \mathsf{sp}^\kappa_\gamma (\mcalA_\alpha \cap\mbbQ^\kappa_\gamma)$.
  So required $N^*$ exists by the definition of $\mathsf{sp}^\kappa_\lambda$.

  Let 
  \begin{center}
  $N_{\alpha+1}=\{f(s)\mid f \colon V_\gamma\rightarrow V_\lambda,f\in N_\alpha,  
  s\in N^\ast \cap V_\gamma\}$.
  \end{center}
  Then $N_{\alpha+1}\prec 
 V_\lambda$.
  Considering the identity and constant functions, we get $N^\ast \cap V_\gamma\subseteq N_{\alpha+1}$ and $N_\alpha\subseteq N_{\alpha+1}$. 
  
  We see that $N_{\alpha+1} \cap V_\gamma$ end-extends $N_\alpha\cap V_\gamma$.
   It suffices to prove that $N_{\alpha+1}\cap V_\gamma = N^\ast\cap V_\gamma$.
   To prove $N_{\alpha+1} \cap V_\gamma\subseteq N^\ast\cap V_\gamma$,
   let $f \colon V_\gamma\rightarrow V_\lambda$,
   $f\in N_\alpha$, $s\in N^\ast\cap V_\gamma$ and $f(s)\in V_\gamma$.
   We define a function $f' \colon V_\gamma\rightarrow V_\gamma$ by $f'(t)=f(t)$ if $f(t)\in V_\gamma$ and $f'(t)=0$ otherwise.
   Then $f'\in N_\alpha\cap V_{\gamma+1}$ and $f'(s)=f(s)$.
   Moreover, $f'(s)\in N^*\cap V_{\gamma}$ since $f',s\in N^*\prec V_{\gamma+1}$.
   So $f(s)\in N^\ast\cap V_\gamma$.
   
   Since $N_{\alpha+1}\cap V_\gamma = N^* \cap V_\gamma$, 
   \begin{center}
   $N_{\alpha+1}\cap \bigcup Y= N^* \cap \bigcup Y$ 
   \end{center}
   for $Y\in \mbbQ^\kappa_\gamma$, and
   $N_{\alpha+1}\cap \mbbQ^\kappa_\gamma=N^*\cap \mbbQ^\kappa_\gamma$.
   Therefore $N_{\alpha+1}$ satisfies conditions.
   
   The proof of Claim 1 is completed.  \hfill$\blacksquare$
\vskip0.5\baselineskip
   \textbf{Claim 2.}
   Let $Z$ be  the set of $M\in \mcalP_{\kappa}(V_\gamma)$ such that 
   \begin{itemize}
       \item $M\cap \bigcup X\in X$,
       \item for all $\alpha<\mu$ there is $Y\in \mcalA_{\alpha}\cap \mbbQ^\kappa_\lambda\cap M$ such that $M\cap \bigcup Y\in Y$.
   \end{itemize}
   Then $Z$ is stationary.

\vskip0.5\baselineskip
   \textbf{Proof of Claim 2.}
   Let $f \colon [V_\gamma]^{<\omega}\rightarrow V_\gamma$ be a function.
   We show that there is $M\in Z$ such that $f[[M]^{<\omega}]\subseteq M$ and $M\cap\kappa\in\kappa$.
   Since $X$ is stationary, there is $N\in\mcalP_\kappa(V_{\lambda})$ such that 
   \begin{itemize}
       \item $X, f, \langle \mcalA_\alpha\mid \alpha<\mu\rangle, \gamma\in N\prec V_{\lambda}$,
       \item $N\cap \bigcup X\in X$,
       \item $N\cap\kappa\in\kappa$.
   \end{itemize}
   For this $N$, we take $N'$ such as in Claim 1 and let $M=N'\cap V_\gamma$.
   Since $M=N'\cap V_\gamma$ end-extends $N\cap V_\gamma$,
   \begin{center}
   $M\cap\bigcup X=N\cap \bigcup X\in X$.
   \end{center}
   If $\alpha<\mu$ then there is $Y\in \mcalA_\alpha\cap \mbbQ^\kappa_\gamma \cap N'$ such that $N'\cap \bigcup Y\in Y$ and such $Y$ is in $M$.
   Thus $M\in Z$.
   And $f[[M]^{<\omega}]\subseteq M$ holds, as $f\in M\prec V_{\gamma}$.
   Since $M$ end-extends $N\cap V_\gamma$ and $N\cap\kappa\in\kappa$, $M\cap\kappa\in\kappa$.
   The proof of Claim 2 is completed.
   \hfill$\blacksquare$
\vskip0.5\baselineskip
   We note that $Z\leq_{\mbbQ^\kappa_\lambda} X$ and 
   \begin{center}
   $Z\leq_{\mbbQ^\kappa_\lambda} \bigtriangledown_{Y\in \mcalA_\alpha \cap\mbbQ^\kappa_\gamma} (\pi^{-1}_{\gamma, \supp(Y)}(Y))$.
   \end{center}
   for all $\alpha<\mu$.
   By Lemma \ref{supinf},
   \begin{center}
   $Z\leq_{\mbbQ^\kappa_\lambda} \sup \mcalA_\alpha \cap\mbbQ^\kappa_\gamma$
   \end{center}
   for all $\alpha<\mu$.
   Since $\mcalA_\alpha$ is a maximal antichain in $\mbbQ^\kappa_\lambda$,
   for all $\alpha<\mu$
   \begin{center}
   $\{Y\in \mathcal{A}_\alpha\mid Y \parallel Z\}\subseteq
   \mcalA_\alpha\cap\mbbQ^\kappa_\gamma$
   \end{center}
   whose cardinality is less than $\lambda$.
 \end{proof}

\begin{cor}
 Let $\lambda$ be a Woodin cardinal which is limit of Woodin cardinals.
 Then 
 for all successor cardinal $\kappa<\lambda$
 there are unboundedly many inaccessible cardinals $\mu<\lambda$ such that $\mcalI^\kappa_\mu$ is presaturated. 
\end{cor}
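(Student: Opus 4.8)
The plan is to derive this directly from Theorem~\ref{woodinimpliescc}, which is where all the genuine work resides; the corollary is essentially a bookkeeping argument once that theorem is in hand. The two facts I need beyond the theorem are purely about the large-cardinal hierarchy. First, every Woodin cardinal is inaccessible (a Woodin cardinal is regular and a limit of measurable cardinals, hence a regular strong limit). Second, since $\lambda$ is a limit of Woodin cardinals, the class of Woodin cardinals below $\lambda$ is unbounded in $\lambda$.

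Now I would fix a successor cardinal $\kappa<\lambda$; in particular $\kappa$ is regular, so it falls under the hypotheses of Theorem~\ref{woodinimpliescc}. To witness unboundedness, I fix an arbitrary $\beta<\lambda$ and seek an inaccessible $\mu\in(\beta,\lambda)$ with $\mcalI^\kappa_\mu$ presaturated. Using the unboundedness of the Woodin cardinals below $\lambda$, I choose a Woodin cardinal $\mu$ with $\max(\kappa,\beta)<\mu<\lambda$. Such a $\mu$ is inaccessible by the first observation, and since $\kappa<\mu$ is regular and $\mu$ is Woodin, Theorem~\ref{woodinimpliescc} (read with $\lambda$ replaced by $\mu$) yields that $\mcalI^\kappa_\mu$ is presaturated.

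Since $\beta<\lambda$ was arbitrary, this produces inaccessible cardinals $\mu$ with $\mcalI^\kappa_\mu$ presaturated cofinally in $\lambda$, which is the desired conclusion; and $\kappa$ was an arbitrary successor cardinal below $\lambda$. There is no real obstacle here: the only points requiring any care are the two hierarchy facts above, together with checking that the hypotheses of Theorem~\ref{woodinimpliescc} are met at $\mu$ (namely that $\kappa$ is regular and strictly below the Woodin cardinal $\mu$), both of which are immediate. I would also note in passing that, applied to $\lambda$ itself, Theorem~\ref{woodinimpliescc} gives that $\mcalI^\kappa_\lambda$ is presaturated for every regular $\kappa<\lambda$, so that together with this corollary the full hypothesis of Theorem~\ref{maintheorem} is secured under the assumption of Theorem~\ref{univBaire}.
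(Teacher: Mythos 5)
Your proposal is correct and matches the paper's intent exactly: the paper states this corollary with no proof, treating it as an immediate consequence of Theorem~\ref{woodinimpliescc} applied to each Woodin cardinal $\mu$ below $\lambda$ (which is inaccessible and cofinal in $\lambda$ since $\lambda$ is a limit of Woodin cardinals). Nothing further is needed.
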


 \section{Preservation}
We see the preservation of the presaturation and the  factorization under small L\`{e}vy collapses.
Roughly speaking, in order to derive the universally Baireness these preservation play similar roles to the preservation of Woodin cardinals under small forcings in the original proof using large cardinals.

 \begin{lemma}\label{kappacc}
     Let $\kappa$ be a regular cardinal and $I$ be a $\kappa$-complete ideal on a set $Z$.
     Suppose that $\mbbP$ is a poset with $\kappa$-c.c., 
     and $G$ is a $\mbbP$-generic filter over $V$. 
     Let 
     \begin{center}
     $J=\{Y\subseteq Z \mid \exists X\in I(Y\subseteq X)\}$
     \end{center}
     in $V[G]$
     and $\dot{J}$ be a $\mbbP$-name for $J$.
     Then the following hold.
     \begin{enumerate}
     \item For all $p\in \mbbP$ and $\mbbP$-name $\dot{Y}$ such that $p\Vdash_\mbbP  \dot{Y}\in \dot{J}$, there is $X\in I$ such that $p\Vdash_\mbbP \dot{Y}\subseteq X$.
     \item $J$ is  a $\kappa$-complete ideal on $Z$ in $V[G]$.
     \end{enumerate}
 \end{lemma}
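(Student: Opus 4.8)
The plan is to prove (1) first and then bootstrap it into the $\kappa$-completeness in (2), the remaining ideal axioms being routine. The two hypotheses split cleanly: the $\kappa$-c.c. controls the size of the relevant antichains, and the $\kappa$-completeness of $I$ lets us amalgamate $<\kappa$ many ground-model witnesses into one.

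For (1), fix $p$ and $\dot{Y}$ with $p\Vdash_\mbbP \dot{Y}\in\dot{J}$. Since every member of $I$ lies in $V$, the set
\[
D=\{q\leq_\mbbP p\mid \exists X\in I\ (q\Vdash_\mbbP \dot{Y}\subseteq X)\}
\]
is dense below $p$: given $q\leq_\mbbP p$ we have $q\Vdash_\mbbP \exists X\in I\,(\dot{Y}\subseteq X)$, so some $q'\leq_\mbbP q$ decides a specific such $X\in I$. Take a maximal antichain $A\subseteq D$ below $p$; by the $\kappa$-c.c.\ we have $|A|<\kappa$. For each $q\in A$ choose $X_q\in I$ with $q\Vdash_\mbbP \dot{Y}\subseteq X_q$, and set $X=\bigcup_{q\in A}X_q$. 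Since $I$ is $\kappa$-complete and $|A|<\kappa$, we get $X\in I$. Finally $p\Vdash_\mbbP \dot{Y}\subseteq X$: if $G$ is generic with $p\in G$ then exactly one $q\in A$ lies in $G$ (as $A$ is a maximal antichain below $p$), whence $\dot{Y}^G\subseteq X_q\subseteq X$.

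For (2), closure under subsets and finite unions is immediate from the definition, and $Z\notin J$ holds because $I$ is proper: any $X\in I$ satisfies $X\subseteq Z$, so $Z\subseteq X$ would force $X=Z\in I$, a contradiction. For $\kappa$-completeness, work in $V[G]$ and take $\langle Y_\alpha\mid\alpha<\gamma\rangle$ with $\gamma<\kappa$ a sequence of members of $J$; note $\gamma<\kappa$ is absolute and $\kappa$ stays regular in $V[G]$ by the $\kappa$-c.c. Let $\dot{s}$ name this sequence and pick $p\in G$ forcing that $\dot{s}$ is a $\gamma$-sequence of elements of $\dot{J}$. In $V$, for each $\alpha<\gamma$ form the name $\dot{Y}_\alpha$ for the $\alpha$-th entry of $\dot{s}$; then $p\Vdash_\mbbP \dot{Y}_\alpha\in\dot{J}$, so by (1) there is $X_\alpha\in I$ with $p\Vdash_\mbbP \dot{Y}_\alpha\subseteq X_\alpha$. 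Using choice in $V$ we fix a sequence $\langle X_\alpha\mid\alpha<\gamma\rangle\in V$ of members of $I$, and by $\kappa$-completeness of $I$ in $V$ the set $X=\bigcup_{\alpha<\gamma}X_\alpha$ lies in $I$. Then $p\Vdash_\mbbP \bigcup_{\alpha<\gamma}\dot{Y}_\alpha\subseteq X$, so in $V[G]$ we obtain $\bigcup_{\alpha<\gamma}Y_\alpha\subseteq X\in I$, i.e.\ $\bigcup_{\alpha<\gamma}Y_\alpha\in J$.

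The crux — and the only step where the hypotheses are genuinely used in concert — is this last passage from a $V[G]$-sequence of covering sets to a single ground-model cover. One cannot simply choose $X_\alpha\in I$ inside $V[G]$ and take their union, since that sequence of witnesses need not belong to $V$, and the $\kappa$-completeness of $I$ is a property of $V$. Statement (1) is exactly the device that repairs this: it yields each witness together with the \emph{fixed} condition $p$ forcing the containment, so the witnessing sequence can be assembled entirely in $V$ and the ground-model completeness invoked there. The $\kappa$-c.c.\ is what keeps the antichain in (1) (and hence each witness, and in (2) the index set) of size $<\kappa$, matching the completeness of $I$.
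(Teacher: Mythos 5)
Your proof is correct and follows essentially the same route as the paper: for (1), a maximal antichain below $p$ of conditions each deciding a ground-model cover $X_q\in I$, which has size $<\kappa$ by the $\kappa$-c.c., so that $\bigcup_q X_q\in I$ by $\kappa$-completeness; for (2), applying (1) coordinatewise to a name for the sequence to assemble the witnesses in $V$ and taking their union there. The paper's write-up is terser (it omits the density argument and the routine ideal axioms), but the content is identical.
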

 
 \begin{proof}
  (1)
  Suppose that $p\in \mbbP$ and $\dot{Y}$ is a $\mbbP$-name such that $p\Vdash_\mbbP \dot{Y}\in \dot{J}$.
  Let $A$ be a maximal antichain in $\mbbP$ below $p$ such that for each $q\in A$ there is $X_q \in I$ such that 
  $q \Vdash_\mbbP \dot{Y}\subseteq X_q$.
  Let $X=\bigcup_{q\in A}X_q$. Then $X\in I$ by the $\kappa$-c.c. and completeness, and $p\Vdash_\mbbP \dot{Y}\subseteq X$.

  (2)
  To see the $\kappa$-completeness, let $\gamma<\kappa$, $p\in \mbbP$ and $\dot{\bar{Y}}$ be a $\mbbP$-name such that $p\Vdash_{\mbbP} `` \dot{\bar{Y}}$ is a sequence in $\dot{J}$ of length $\gamma$ ".
  By $p\Vdash_{\mbbP} \dot{\bar{Y}}(\xi)\in \dot{J}$ and (1),
  for $\xi<\gamma$
  there is $X_\xi \in I$ such that $p\Vdash_{\mbbP} \dot{\bar{Y}}(\xi)\subseteq X_\xi$.
  By the $\kappa$-completeness of $I$,
  we have that
  $\bigcup_{\xi<\gamma} X_\xi\in I$.
  So $p\Vdash_{\mbbP} \bigcup_{\xi<\gamma} \dot{\bar{Y}}(\xi) \in \dot{J}$.
 \end{proof}

 \begin{lemma}\label{generated}
     Let $\mcalI=\langle I_\alpha\mid \alpha\in[\kappa,\lambda) \rangle$ be a $\kappa$-tower on $\langle \mcalP_\kappa (V_\alpha) \mid \alpha\in [\kappa,\lambda) \rangle$.
     Suppose that
     $\mbbP$ is a poset with the $\kappa$-c.c. and $G$ is a $\mbbP$-generic filter over $V$. 
     Let 
     \begin{center}
     $J_\alpha =\{Y\subseteq \mathcal{P}_\kappa(V_\alpha)^V\mid \exists X\in I_\alpha
     (Y\subseteq X)\}$
     \end{center}
     in $V[G]$.
     Then, $\mcalJ=\langle J_\alpha\mid \alpha<\lambda \rangle$ is a $\kappa$-tower on $\langle \mcalP_\kappa(V_\alpha)^V \mid \alpha\in [\kappa,\lambda)\rangle$ in $V[G]$.
 \end{lemma}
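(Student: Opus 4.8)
The plan is to establish, in $V[G]$, the three defining properties of each ideal $J_\alpha$ (fine, normal, $\kappa$-complete) together with the projection identity that makes $\mcalJ$ a tower. The $\kappa$-completeness and the fact that each $J_\alpha$ is a proper ideal on $\mcalP_\kappa(V_\alpha)^V$ are immediate from Lemma \ref{kappacc}(2), applied to $I=I_\alpha$ and $Z=\mcalP_\kappa(V_\alpha)^V$. Fineness is equally cheap: for $i \in V_\alpha$ the set $\{a \in \mcalP_\kappa(V_\alpha)^V : i \notin a\}$ is computed from ground-model elements by an absolute condition, hence is the same in $V$ and $V[G]$ and already lies in $I_\alpha \subseteq J_\alpha$. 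So the two substantive points are the normality of each $J_\alpha$ and the projection identity $J_\alpha = \{Y : \pi_{\beta,\alpha}^{-1}(Y) \in J_\beta\}$.

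For normality I would argue by contradiction through the diagonal-union formulation. Suppose $f \in V[G]$ is regressive on a $J_\alpha$-positive set $S = \{a : f(a) \in a\}$, and suppose toward a contradiction that $S_b := \{a \in S : f(a) = b\} \in J_\alpha$ for every $b \in V_\alpha$. Then $S = \{a \in \mcalP_\kappa(V_\alpha)^V : \exists b \in a\ (a \in S_b)\}$, since each $a \in S$ witnesses this with $b = f(a) \in a$. Now fix a $\mbbP$-name $\dot{\bar S}$ for $\langle S_b : b \in V_\alpha\rangle$ and a condition $p \in G$ forcing that each coordinate $\dot{\bar S}(b)$ lies in $\dot J$. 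Applying Lemma \ref{kappacc}(1) to the names $\dot S_b = \dot{\bar S}(\check b)$ and this fixed $p$, and choosing (in $V$, by choice) a witness for each $b$, yields a sequence $\langle \tilde S_b : b \in V_\alpha\rangle \in V$ with $\tilde S_b \in I_\alpha$ and $p \Vdash_\mbbP \dot S_b \subseteq \tilde S_b$; as $p \in G$ we get $S_b \subseteq \tilde S_b$ in $V[G]$. By the normality of $I_\alpha$ in $V$, i.e. its closure under diagonal unions, the set $\{a : \exists b \in a\ (a \in \tilde S_b)\}$ lies in $I_\alpha$, and since $S \subseteq \{a : \exists b \in a\ (a \in \tilde S_b)\}$ we conclude $S \in J_\alpha$, contradicting positivity. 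Hence some $S_b$ is $J_\alpha$-positive, which is exactly normality.

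For the projection identity, fix $\kappa \leq \alpha < \beta < \lambda$ and $Y \subseteq \mcalP_\kappa(V_\alpha)^V$ in $V[G]$; recall $\pi_{\beta,\alpha}(x) = x \cap V_\alpha$, which maps $\mcalP_\kappa(V_\beta)^V$ into $\mcalP_\kappa(V_\alpha)^V$ absolutely. If $Y \in J_\alpha$, pick $X \in I_\alpha$ with $Y \subseteq X$; then $\pi_{\beta,\alpha}^{-1}(Y) \subseteq \pi_{\beta,\alpha}^{-1}(X)$, and $\pi_{\beta,\alpha}^{-1}(X) \in I_\beta$ because $\mcalI$ is a tower in $V$, so $\pi_{\beta,\alpha}^{-1}(Y) \in J_\beta$. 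Conversely, if $\pi_{\beta,\alpha}^{-1}(Y) \subseteq X'$ with $X' \in I_\beta$, put $X = \{y \in \mcalP_\kappa(V_\alpha)^V : \pi_{\beta,\alpha}^{-1}(\{y\}) \subseteq X'\}$. Then $Y \subseteq X$, because $y \in Y$ gives $\pi_{\beta,\alpha}^{-1}(\{y\}) \subseteq \pi_{\beta,\alpha}^{-1}(Y) \subseteq X'$; moreover $\pi_{\beta,\alpha}^{-1}(X) \subseteq X'$, since any $x \in \pi_{\beta,\alpha}^{-1}(X)$ satisfies $x \in \pi_{\beta,\alpha}^{-1}(\{x \cap V_\alpha\}) \subseteq X'$. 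Thus $\pi_{\beta,\alpha}^{-1}(X) \in I_\beta$, so $X \in I_\alpha$ by the tower property of $\mcalI$, and $Y \subseteq X$ gives $Y \in J_\alpha$, completing the identity.

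The \emph{main obstacle} is normality: the pressing-down function $f$ and all of its level sets $S_b$ live in $V[G]$, and the relevant diagonal union ranges over $b \in V_\alpha$, a set of size at least $\kappa$, so the $\kappa$-completeness of $J_\alpha$ does not suffice on its own. The decisive maneuver is to replace the $V[G]$-sequence of covers by a single ground-model sequence $\langle \tilde S_b \rangle \in V$ using the $\kappa$-c.c. in the guise of Lemma \ref{kappacc}(1), after which the ground-model normality of $I_\alpha$ finishes the argument. Everything else reduces to routine absoluteness bookkeeping about $\mcalP_\kappa(V_\alpha)^V$ and the maps $x \mapsto x \cap V_\alpha$ between $V$ and $V[G]$.
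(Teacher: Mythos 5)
Your proof is correct, and it is more thorough than the paper's own argument while sharing its engine. The paper's proof of Lemma \ref{generated} addresses only the projection identity (taking $\kappa$-completeness from Lemma \ref{kappacc}(2) and leaving fineness and normality unremarked), whereas you verify normality explicitly; your normality argument is exactly the right one, since the level sets $S_b$ live in $V[G]$ and one must use Lemma \ref{kappacc}(1) coordinatewise below a single $p\in G$ to replace them by a ground-model sequence $\langle \tilde S_b\rangle\in V$ of $I_\alpha$-sets before invoking the closure of $I_\alpha$ under diagonal unions in $V$ --- this is not covered by $\kappa$-completeness alone, so it is a genuine (and correctly handled) addition. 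On the projection identity the forward direction coincides with the paper's. For the reverse direction the paper argues with names: it fixes $p\in G$ forcing $\dot Y_\alpha\in\dot J_\alpha$, forms the ground-model set of all $x$ that some $q\leq p$ forces into $\dot Y_\beta$, and shows this cover lies in $I_\beta$ by comparing with the Lemma \ref{kappacc} cover of $Y_\alpha$ one level up. You instead work name-free: starting from a ground-model $X'\in I_\beta$ covering $\pi_{\beta,\alpha}^{-1}(Y)$, you push it down to $X=\{y\mid \pi_{\beta,\alpha}^{-1}(\{y\})\subseteq X'\}\in V$, check $Y\subseteq X$ and $\pi_{\beta,\alpha}^{-1}(X)\subseteq X'$, and conclude $X\in I_\alpha$ from the tower property of $\mcalI$ in $V$. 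Both routes are valid; yours is more elementary (no forcing-theoretic bookkeeping in that step) and has the additional merit of making the normality verification, which the definition of $\kappa$-tower actually requires, explicit.
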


 \begin{proof}
  Let $\alpha$ and $\beta$ be ordinals in $V$ such that $\kappa\leq \beta <\alpha<\lambda$.
  It suffices to prove that 
  \begin{center}
  $Y_\beta \in J_\beta \Leftrightarrow Y_\alpha=\{y\in \mcalP_\kappa(V_\alpha)^V\mid y\cap V_\beta \in Y_\beta\}\in J_\alpha$
  \end{center}
  for all $Y_\beta \subseteq \mcalP_\kappa(V_\beta)^V$ in $V[G]$.

  For the forward direction,
  let $\dot{Y}_\beta$ be a $\mbbP$-name for $Y_\beta$, $\dot{J}_\beta$ be a $\mbbP$-name for $J_\beta$ and $p\in G$ such that $p\Vdash_\mbbP \dot{Y}_\beta\in \dot{J}_\beta$.
  From Lemma \ref{kappacc}, there is $X_\beta\in I_\beta$ such that $p\Vdash \dot{Y}_\beta \subseteq X_\beta$.
  Since $\mcalI$ is a tower, 
  \begin{center}
  $X_\alpha=\{x\in \mcalP_\kappa(V_\alpha)^V\mid x\cap V_\beta\in X_\beta\}\in I_\alpha$.
  \end{center}
  Thus $Y_\alpha\subseteq X_\alpha\in I_\alpha$ in $V[G]$.

  For the reverse direction,
  suppose that $Y_\beta \subseteq \mcalP_\kappa(V_\beta)^V$ and
  \begin{center}
  $Y_\alpha=\{y\in \mcalP_\kappa(V_\alpha)^V \mid y\cap V_\beta \in Y_\beta\}\in J_\alpha$
  \end{center}
  in $V[G]$.
  We show that $Y_\beta \in J_\beta$.
  Working in $V$.
  Let  $\dot{Y}_\alpha$ and $\dot{Y}_\beta$ be $\mbbP$-names for $Y_\alpha$ and $Y_\beta$ respectively, and $p\in G$ such that $p\Vdash_\mbbP \dot{Y}_\alpha \in\dot{J}_\alpha$.
  Let 
  \begin{center}
  $X_\beta=\{x\in\mcalP_\kappa(V_\beta)^V\mid \exists q\leq p~(q\Vdash_\mbbP x\in \dot{Y}_\beta)\}$
  \end{center}
  and we show that $X_\beta \in I_\beta$.
  Since $\mcalI$ is a tower, it suffices to prove that 
  \begin{center}
  $\{x\in \mcalP_\kappa(V_\alpha)^V\mid x\cap V_\beta \in X_\beta\}\in I_\alpha$.
  \end{center}
  From Lemma \ref{kappacc}, there is $X_\alpha \in I_\alpha$ such that $p\Vdash_{\mbbP} \dot{Y}_\alpha \subseteq X_\alpha$.
  If $x\cap V_\beta\in X_\beta$, then there is $q\leq p$ such that $q \Vdash_{\mbbP} x\in \dot{Y}_\alpha \subseteq X_\alpha$, so $x\in X_\alpha$.
  This completes the proof, as $\{x\in \mcalP_\kappa(V_\alpha)^V\mid x\cap V_\beta \in X_\beta\} \subseteq X_\alpha \in I_\alpha$.
 \end{proof}
We call a tower $\mcalJ$ in Lemma \ref{generated} a tower \textit{generated} by $\mcalI$ in $V[G]$.

The following lemma is a tower version of a well-known theorem about ideals, Foreman's duality theorem.
The proof of ideal version is essentially found in Lemma 22.32. in \cite{jech2003set}.
 \begin{lemma}\label{duality}
  Let $\kappa$ be a successor cardinal and $\lambda$ be an inaccessible cardinal with $\kappa<\lambda$.
  Suppose that 
  $\mbbQ$ is a complete Boolean algebra with the $\kappa$-c.c.
  and
  $\mcalI$ is a presaturated $\kappa$-tower on $\langle \mcalP_\kappa(V_\alpha)\mid \alpha\in [\kappa,\lambda)\rangle$.
  Let
  $j : V\rightarrow M$ be an associated embedding by $\mathcal{I}$
  and $\dot{\mcalJ}$ be a $\mbbQ^+$-name for the tower generated by $\mcalI$ in $V^{\mbbQ^+}$.
  Then there is a dense embedding from $\mbbQ^+\ast\PJ$ to $\mbbP_\mcalI \ast j(\mbbQ^+)$.
  In particular, $\mbbQ^+\ast\mbbP_{\dot{\mcalJ}} \simeq \mbbP_\mcalI\ast j(\mbbQ^+)$.
 \end{lemma}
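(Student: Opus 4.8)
The plan is to write down the dense embedding explicitly, passing between $\mbbQ$-names for positive sets and Boolean-valued functions, and then reading those functions off in the generic ultrapower. Fix a condition $(q,\dot A)$ of $\mbbQ^+\ast\PJ$, so $q\in\mbbQ^+$ and $q\Vdash\dot A\in\PJ$, where I may assume $\dot A$ is (forced to be) a subset of $\mcalP_\kappa(V_\alpha)^V$ for some $\alpha<\lambda$. Since $\mbbQ$ is a complete Boolean algebra, I form in $V$ the function $a\colon\mcalP_\kappa(V_\alpha)\to\mbbQ$ given by $a(x)=\bool{x\in\dot A}\wedge q$, and I set $B_a=\{x\mid a(x)\neq 0\}$ together with the $\mbbP_\mcalI$-name $\dot r_a$ for the element $[a]_G$ of $j(\mbbQ)$ represented by $a$ in the generic ultrapower (intuitively $j(a)$ evaluated at the generic point). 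The candidate map is $e(q,\dot A)=(B_a,\dot r_a)$.

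First I would check that $e$ lands in $\mbbP_\mcalI\ast j(\mbbQ^+)$. That $B_a\in I_\alpha^+$ follows from Lemma \ref{kappacc}: if $B_a$ were in $I_\alpha$ then, since $a(x)=0$ forces $x\notin\dot A$, the condition $q$ would force $\dot A\subseteq B_a$, and $B_a$ is covered by a ground-model null set, contradicting $q\Vdash\dot A\in\PJ$. That $B_a$ forces $\dot r_a\neq 0$ is the {\L}o\'{s} computation $[a]_G\neq 0\Leftrightarrow\{x\mid a(x)\neq 0\}=B_a\in G$, and $B_a$ trivially forces $B_a\in\dot G$. Here Lemma \ref{generated} is what guarantees that $\dot{\mcalJ}$ really is a tower, so that $\PJ$ and these computations make sense uniformly across supports.

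Next I treat order and incompatibility. For monotonicity, if $(q',\dot A')\leq(q,\dot A)$ then, lifting to a common support and using Lemma \ref{kappacc} to replace ``$\dot A'\setminus\dot A$ is null'' by containment in a fixed $X\in I_\alpha$, one gets $a'(x)\leq a(x)$ for all $x\notin X$; hence $B_{a'}\setminus B_a\subseteq X\in I_\alpha$ and, by {\L}o\'{s} applied on the $G$-large complement of $X$, $B_{a'}\Vdash[a']_G\leq[a]_G$, i.e.\ $e(q',\dot A')\leq e(q,\dot A)$. For incompatibility I compute both sides symmetrically: $(a\wedge a')(x)=\bool{x\in\dot A\cap\dot A'}\wedge(q\wedge q')$, so the images are compatible iff the support $D=\{x\mid a(x)\wedge a'(x)\neq 0\}$ is $I$-positive (below $D$ one has $[a]_G\wedge[a']_G=[a\wedge a']_G\neq 0$), and by the same positivity-versus-containment argument $D\in I^+$ holds iff some $q''\leq q\wedge q'$ forces $\dot A\cap\dot A'\in\PJ$, i.e.\ iff $(q,\dot A)\parallel(q',\dot A')$. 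Since order-preservation already supplies one direction of compatibility-preservation, this settles that $e$ reflects and preserves incompatibility.

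The hard part will be density, together with the bookkeeping of the direct-limit structure of $j$. Given $(Y,\dot s)\in\mbbP_\mcalI\ast j(\mbbQ^+)$, I use presaturation (Fact \ref{wellfdd}, so $M$ is well-founded and closed under short sequences) to represent $\dot s$, below $Y$, as $[s]_G$ for an honest $V$-function $s\colon\mcalP_\kappa(V_\gamma)\to\mbbQ$; shrinking $Y$ I may assume $s(x)\neq 0$ exactly on $Y$. I then let $\dot A$ be the canonical $\mbbQ$-name with $\bool{x\in\dot A}=s(x)$; since $\{x\mid s(x)\neq 0\}\supseteq Y\in I_\gamma^+$ is not covered by any ground-model null set, the positivity-versus-containment argument produces $q\neq 0$ forcing $\dot A\in\PJ$, and then $e(q,\dot A)=(B_a,[s]_G\wedge j(q))\leq(Y,\dot s)$, so the range is dense. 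The one genuinely delicate point is matching the functions representing elements of $M$ (with varying domains $\mcalP_\kappa(V_\gamma)$, glued along the maps $j_{\alpha,\beta}$) against conditions of $\PJ$ at varying supports; the coherence of the tower and the commuting projections make the value $[a]_G$ independent of the chosen support, so these identifications go through, but this is where the care lies. Combining the three clauses, $e$ is a dense embedding and the completions agree, giving $\mbbQ^+\ast\PJ\simeq\mbbP_\mcalI\ast j(\mbbQ^+)$.
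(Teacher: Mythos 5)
Your proposal is correct and follows essentially the same route as the paper: the same embedding $(q,\dot A)\mapsto(\{x\mid\bool{x\in\dot A}\wedge q\neq 0\},[a]_{\dot G})$ after reducing to names with decided support, the same use of Lemma \ref{kappacc} to trade ``forced null'' for containment in a ground-model null set, and the same contradiction argument producing the condition $q$ that forces positivity in the density step. The only cosmetic difference is that you spell out order/incompatibility preservation, which the paper leaves as a routine check.
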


 \begin{proof}
 We define a set
 \begin{center}
  $\mcalD=\{(p,\dot{S})\in \mbbQ^+\ast\mbbP_{\dot{\mcalJ}}\mid \exists \mu<\lambda~(p\Vdash_{\mbbQ^+} \mathrm{supp}(\dot{S})=\mu)\}$.
  \end{center}
  Note that for every $(p,\dot{S})\in\mbbQ^+\ast\PJ$ there is a $\mbbQ^+$-name $\dot{S'}$ such that $(p,\dot{S})\leq_{\mbbQ^+\ast\PJ} (p,\dot{S'})\leq_{\mbbQ^+\ast\PJ} (p,\dot{S})$ and $(p,\dot{S'})\in \mcalD$.
  To see this, we take the minimum $\mu<\lambda$ such that $p\Vdash_{\mbbQ^+} \mathrm{supp}(\dot{S})<\mu$ using the $\kappa$-c.c. of $\mbbQ$.
  Let $\dot{S'}$ be a $\mbbQ$-name for $\pi^{-1}_{\mu,\mathrm{supp}(\dot{S})}(\dot{S})$.

  So it suffices to prove that there is a dense embedding from $\mbbQ^+\ast\PJ$ to $\PI\ast j(\mbbQ^+)$. Let $h : \mcalD \rightarrow \mbbP_{\mcalI}\ast j(\mbbQ^+)$ be a function defined by $h(p,\Dot{S})=(Z, [f]_{\Dot{G}})$, 
  where $\dot{G}$ is the canonical $\PI$-name for a generic filter, 
  \begin{center}
  $Z=\{x\in\mcalP_\kappa (V_\mu)^V\mid  \bool{x\in \Dot{S}}_{\mbbQ^+}  \parallel p\}$
  \end{center}
  and $f : \mcalP_\kappa(V_\mu)^V \rightarrow \mbbQ$ by 
  $f(x)=\llbracket x\in \dot{S}\rrbracket_{\mbbQ^+} \land p$
  for $\mu<\lambda$ such that $p\Vdash \mathrm{supp}(\dot{S})=\mu$.
  $[f]_{\dot{G}}$ is a name of the element represented by $f$ in $Ult(V; G)$.
  Note that $Z\in I_\mu^+$ because $p\Vdash_{\mbbQ^+} \dot{S}\subseteq Z$.

  We can check straightforwardly the preservation of the order and incompatibility by $h$.
  To prove that $h$ has the dense image,
  we fix $(Y,\dot{q})\in \mbbP_{\mcalI} \ast j(\mbbQ^+)$.
  Then  there is $Y'\leq_{\PI} Y$, $\mu<\lambda$, and $f : \mcalP_\kappa(V_\mu)^V \rightarrow \mbbQ^+$ such that $Y'\Vdash_{\PI} \dot{q}=[f]_{\dot{G}}$.
  Note that 
  $(Y',[f]_{\dot{G}})\leq_{\mbbP_{\mcalI} \ast j(\mbbQ^+)} (Y,\dot{q})$.
  We claim that there is $p\in \mbbQ^+$ such that 
  \begin{center}
  $p\Vdash_{\mbbQ^+} \{x\in Y'\mid f(x)\in \dot{G}\}\in \dot{J}^+_\mu$
  \end{center}
  For contradiction, suppose that there is no such condition $p\in\mbbQ^+$. Then
  \begin{center}
  $\Vdash_{\mbbQ^+} \{x\in Y'\mid f(x)\in \dot{G}\}\in \dot{J}_\mu$,
  \end{center}
  so there is $X\in I_\mu$ such that 
  \begin{center}
  $\Vdash_{\mbbQ^+} \{x\in Y'\mid f(x)\in \dot{G}\}\subseteq X$
  \end{center}
  by Lemma \ref{kappacc}.
  Since $Y'\in I_\mu^+$ and $X\in I_\mu$, 
  $Y'\setminus X$ is not empty.
  If $x\in Y'\setminus X$ then $f(x)\nVdash_{\mbbQ^+} f(x)\in \dot{G}$; this is a contradiction.
  
  Let $p\in\mbbQ^+$ be such a condition, and $\dot{X}=\{(\check{x},f(x))\mid x\in Y'\}$.
  Then $p\Vdash_{\mbbQ^+} \dot{X}\in \dot{J}_{\mu}^+$.
  We let $h(p,\dot{X})=(Y^* ,[f^*]_{\dot{G}})$,
  that is, 
  \begin{center}
  $f^{*}(x)=\left\{
  \begin{array}{ll}
     f(x)\land p  & (x\in Y')  \\
      0 & (otherwise).
  \end{array}
  \right.$
  \end{center}
  Then $Y^* \subseteq Y'$ 
  and 
  \begin{center}
  $Y^*\Vdash_{\mbbP_{\mcalI}} [f^*]_{\dot{G}} \leq_{j(\mbbQ)} [f]_{\dot{G}}$.
  \end{center}
  So, $h(p,\dot{X})\leq_{\PI*j(\mbbQ^+)} (Y', [f]_{\dot{G}})\leq_{\PI*j(\mbbQ^+)} (Y,\dot{q})$.
  \end{proof}
 
From this lemma, we can show the preservation of the presaturation.
 \begin{theorem}\label{presatpres}
 Let $\kappa$ be a successor cardinal, $\lambda$ be an inaccessible cardinal with $\kappa<\lambda$,
  $\mcalI$ be a $\kappa$-tower on $\langle \mcalP_\kappa(V_\alpha)\mid \alpha\in [\kappa,\lambda)\rangle$,
  and $\dot{\mcalJ}$ be a $\collkappa$-name of the tower generated by $\mcalI$ in $V^{\collkappa}$.
  Suppose that $\mcalI$ is presaturated.
  Then $\Vdash_{\collkappa} ``\dot{\mcalJ}$ is presaturated".
 \end{theorem}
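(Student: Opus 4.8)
The plan is to reduce the presaturation of $\dot{\mcalJ}$ to a regularity statement about $\lambda$ in the two-step iteration $\mbbQ^+\ast\PJ$, and then to transport that statement across the duality of Lemma \ref{duality} to the iteration $\PI\ast j(\mbbQ^+)$, where it becomes transparent. Write $\mbbQ$ for the completion of $\collkappa$, so that $V^{\mbbQ^+}=V^{\collkappa}$ and the $\dot{\mcalJ}$ of the statement is the tower generated by $\mcalI$ in that extension (note $\lambda$ stays inaccessible there, as $\collkappa$ has size below $\lambda$). Unwinding the definition of presaturation, the conclusion $\Vdash_{\collkappa}$ ``$\dot{\mcalJ}$ is presaturated'' is exactly $\Vdash_{\mbbQ^+\ast\PJ}$ ``$\lambda$ is regular''. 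Since $\lambda\in V$, the statement ``$\lambda$ is regular'' is insensitive to replacing a poset by a forcing-equivalent one; hence by Lemma \ref{duality} it suffices to prove $\Vdash_{\PI\ast j(\mbbQ^+)}$ ``$\lambda$ is regular''.

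Before invoking the duality I would verify its hypotheses. The L\'evy collapse $\collkappa$ consists of finite partial functions, and a $\Delta$-system argument shows it has the $\kappa$-c.c.: given $\kappa$ many conditions, thin out to a $\Delta$-system with finite root, and observe that the number of possible values on the root is a product of finitely many cardinals each below $\kappa$, hence below $\kappa$ by regularity, so two of the conditions agree on the root and are compatible. Thus $\mbbQ$ is a $\kappa$-c.c. complete Boolean algebra, which is precisely what is needed to apply Lemma \ref{generated} (so that $\dot{\mcalJ}$ really does name a $\kappa$-tower) and Lemma \ref{duality}.

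It then remains to analyze $\PI\ast j(\mbbQ^+)$. Fix a $\PI$-generic filter $G$ and work in $V[G]$. Since $\mcalI$ is presaturated, $\lambda$ is regular in $V[G]$, the ultrapower $M$ is closed under $<\lambda$-sequences (Fact \ref{wellfdd}), and by Fact \ref{presat} the embedding $j\colon V\to M$ satisfies $\mathrm{crit}(j)=\kappa$ and $j(\kappa)=\lambda$. By elementarity $j(\mbbQ)$ is, in $M$, the completion of $Coll(\omega,<j(\kappa))=Coll(\omega,<\lambda)$. The poset $Coll(\omega,<\lambda)$ consists of finite partial functions $p\colon\omega\times\lambda\to\lambda$ with $p(\gamma,\delta)\in\delta$; since $\lambda\subseteq M$ and $M$ is closed under $<\lambda$-sequences in $V[G]$, every such finite function of $V[G]$ already belongs to $M$, so this poset is computed identically in $M$ and in $V[G]$. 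Forcing with $j(\mbbQ^+)$ over $V[G]$ is therefore forcing with $Coll(\omega,<\lambda)^{V[G]}$, and because $\lambda$ is regular in $V[G]$ the same $\Delta$-system argument gives that this poset has the $\lambda$-c.c. in $V[G]$. A $\lambda$-c.c. forcing preserves cofinalities $\geq\lambda$, so $\lambda$ remains regular in every extension $V[G][H]$ by a $j(\mbbQ^+)$-generic $H$. This is exactly $\Vdash_{\PI\ast j(\mbbQ^+)}$ ``$\lambda$ is regular'', completing the argument.

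The crux is the identification of $j(\mbbQ^+)$ with the L\'evy collapse $Coll(\omega,<\lambda)$ of $V[G]$ together with its $\lambda$-c.c.: this is where $j(\kappa)=\lambda$ (Fact \ref{presat}) and the $<\lambda$-closure of $M$ (Fact \ref{wellfdd}) do the real work, turning the second factor of the iteration into a poset that visibly preserves the regularity of $\lambda$. The remaining points---checking the $\kappa$-c.c. of the collapse so that Lemmas \ref{generated} and \ref{duality} apply, and observing that ``$\lambda$ is regular'' transfers across the forcing equivalence---are routine.
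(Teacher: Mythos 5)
Your proof is correct and follows essentially the same route as the paper: both pass through Lemma \ref{duality} to replace $\collkappa\ast\mbbP_{\dot{\mcalJ}}$ by $\PI\ast j(\mbbQ^+)$ and then use Fact \ref{presat} and elementarity to identify $j(\mbbQ^+)$ with (the completion of) $Coll(\omega,<\lambda)$. The only cosmetic difference is that the paper goes one step further and rewrites the iteration as the product $\PI\times Coll(\omega,<\lambda)$ before asserting preservation of the regularity of $\lambda$, whereas you stay with the iteration and justify the final step directly via the $\lambda$-c.c.\ of $Coll(\omega,<\lambda)^{V[G]}$ over $V[G]$ --- which, if anything, makes that last preservation argument more explicit than the paper's.
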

 \begin{proof}
  Let $j:V\rightarrow M$ be the associated embedding by $\mcalI$.
  We show that
  \begin{center}
  $\collkappa\ast \mbbP_{\dot{\mcalJ}} \simeq \mbbP_{\mcalI}\times Coll(\omega,<\lambda)$.
  \end{center}
  Let $\mbbB_\kappa$ be the completion of $\collkappa$, and $d : \collkappa \rightarrow \mbbB_\kappa^+$ be a dense embedding.
  We obtain $e : V^{\collkappa}\rightarrow V^{\mbbB^+_\kappa}$ induced by $d$. That is, 
  \begin{center}
  $e(\dot{x})=\{(e(\dot{y}),d(p))\mid(\dot{y},p)\in \dot{x}\}$.
  \end{center}
  We note that $\Vdash_{\collkappa} ``e(\dot{\mcalJ})$ is the tower generated by $\mcalI$".
  Since $\collkappa$ and $\mbbB_\kappa^+$ are forcing equivalent, $\collkappa\ast\PJ$ and $\mbbB_\kappa^+\ast\mbbP_{e(\dot{\mathcal{J}})}$ are also forcing equivalent.
  By Lemma \ref{duality}, there is a embedding $h : \mbbB_\kappa^+ \ast \PJ\rightarrow \PI\ast j({\mbbB}_\kappa^+)$.
  For a $\PI$-generic filter $G$ over $V$ and the ultrapower $j : V\rightarrow M$,
  \begin{center}
    $M\models `` j(d)$ is a dense embedding from $Coll(\omega,<\lambda)$ to $j({\mbbB}_\kappa^+) "$.
  \end{center}
  by the elementarity of $j$ and Fact \ref{presat}.
  This holds also in $V[G]$,
  so
  \begin{center}
  $\PI\ast j(\mbbB_\kappa^+)\simeq\PI\times Coll(\omega,<\lambda)$.
  \end{center}
  Combining above arguments, we get
  $Coll(\omega,<\kappa)\ast\PJ\simeq \PI\times Coll(\omega,<\lambda)$.
  
  Since $\mbbP_{\mcalI}\times Coll(\omega,<\lambda)$ preserves the regularity of $\lambda$ by the presaturation of $\mcalI$, 
  it follows that
  $\Vdash_{\collkappa} `` \mbbP_{\dot{\mcalJ}}$ preserves the regularity of $\lambda$"
  .
 \end{proof}

Next we shall consider the preservation of the factorization.
We see that the factorization of $\mcalI$ is characterized by the existence of a special condition in $\PI$.

\begin{lemma}
    Let $\mu<\lambda$ be inaccessible cardinals and $\mcalI$ be a tower of height $\lambda$.

    Then $\mcalI$ factors at $\mu$ if and only if 
    there is $X\in \PI$ such that 
    \begin{enumerate}
        \item $X\parallel Y$ in $\PI$ for all $Y\in \PImu$,
        \item 
        $X\Vdash_{\PI} ``\dot{H}\cap \PImu$ is $\PImu$-generic", where $\dot{H}$ is the canonical name of a $\PI$-generic filter.
    \end{enumerate}
\end{lemma}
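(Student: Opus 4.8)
The plan is to prove the two implications separately, with essentially all the content in the forward direction (factoring $\Rightarrow$ existence of the master condition $X$).

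For the backward direction I would argue directly. Assuming $X \in \PI$ satisfies (1) and (2), fix an arbitrary $Y_0 \in \PImu$; by clause (1) we have $X \parallel Y_0$, so choose a common lower bound $Y \leq_{\PI} X, Y_0$. Since $Y \leq_{\PI} X$ and, by (2), $X \Vdash_{\PI} ``\dot{H} \cap \PImu$ is $\PImu$-generic'', the condition $Y$ forces the same statement, and $\dot{H}$ is literally the canonical name $\dot{G}$ for a $\PI$-generic filter. Thus $Y \leq_{\PI} Y_0$ and $Y \Vdash_{\PI} ``\dot{G} \cap \PImu$ is $\PImu$-generic'', which is exactly the witness demanded by the definition of factoring at $\mu$. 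As $Y_0$ is arbitrary, $\mcalI$ factors at $\mu$.

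For the forward direction, assume $\mcalI$ factors at $\mu$. For each $Y \in \PImu$ the factoring hypothesis hands us a witness $W_Y \leq_{\PI} Y$ with $W_Y \Vdash_{\PI} ``\dot{G} \cap \PImu$ is $\PImu$-generic''. Because $\lambda$ is inaccessible and every base set of $\mcalI$ has cardinality below $\lambda$, the poset $\PImu$ has size $<\lambda$, so Lemma \ref{supinf} lets me form the supremum $X := \bigtriangledown_{Y \in \PImu} W_Y$ as a genuine condition in $\PI$. I would then verify the two clauses. For (1), each $W_Y$ is a common lower bound of $X$ and $Y$, so $X \parallel Y$ for every $Y \in \PImu$. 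For (2), the family $\{W_Y : Y \in \PImu\}$ is predense below its supremum $X$ --- an element below $X$ incompatible with all $W_Y$ would, by the infinite distributive law in the completion of $\PI$, be incompatible with $X$ itself --- so any $\PI$-generic $G$ containing $X$ must contain some $W_{Y_0}$; since $W_{Y_0}$ forces $\dot{G} \cap \PImu$ to be $\PImu$-generic, so does $X$.

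I expect the main obstacle to be the tension between the two clauses in the forward direction. Forcing clause (2) on a single condition is cheap, but a condition forcing $\dot{G} \cap \PImu$ to be generic can still drive the generic away from a prescribed $Y \in \PImu$, so (2) does not deliver (1) for free. The resolution is to take the supremum over all of $\PImu$, rather than over a maximal antichain, so that each individual $W_Y \leq_{\PI} X$ secures compatibility with $Y$. The delicate point is therefore that this supremum be realized as an actual positive condition rather than merely an element of the Boolean completion, and that is precisely what Lemma \ref{supinf} provides once $|\PImu| < \lambda$.
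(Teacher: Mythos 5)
Your proof is correct and follows essentially the same route as the paper: the paper also obtains $X$ as the supremum $\bigtriangledown_{Y\in\PImu} X_Y$ of witnesses $X_Y\leq_{\PI} Y$ via Lemma \ref{supinf}, and simply leaves the converse direction and the verification of (1) and (2) implicit. The one small caveat is that the predensity of $\{W_Y\mid Y\in\PImu\}$ below $X$ is better justified by the normality/pressing-down argument already used in the proof of Lemma \ref{supinf} than by appealing to the distributive law of the completion, since the latter would first require checking that the supremum in the poset is also the supremum in its Boolean completion (which is equivalent to the predensity being proved).
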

\begin{proof}
    We show that if $\mcalI$ factors at $\mu$ then there is a condition $X$ such as in the statement.
    For each $Y\in \PImu$ fix $X_Y \in \PI$ such that $X_Y \leq_{\PI} Y$ and $X_Y \Vdash_{\PI} ``\dot{H}\cap \PImu$ is $\PImu$-generic". 
    Let $X=\bigtriangledown_{Y\in\PImu} X_Y$.
    By Lemma \ref{supinf} $X$ is the supremum of $X_Y$'s in $\PI$.
    So $X$ is as desired.
\end{proof}

We show the preservation of the factorization under L\'{e}vy collapses by showing that the properties $(1)$ and $(2)$ are preserved.

\begin{lemma}\label{compatible}
 Let $\kappa$ be a successor cardinal and $\mu$ and $\lambda$ be inaccessible cardinals with $\kappa<\mu<\lambda$.
 Suppose that $\mcalI$ is a $\kappa$-tower on $\langle \mcalP_\kappa (V_\alpha)\mid \alpha\in[\kappa,\lambda)\rangle$.
 Let $\mbbQ$ be a poset with $\kappa$-c.c. and
 $\dot{\mcalJ}$ is a $\mbbQ$-name of the tower generated by $\mcalI$ in $V^{\mbbQ}$.
 Let $X\in \PI$  and assume that $X\parallel Y$ in $\PI$ for all $Y\in \PImu$.
 Then $\Vdash_{\mbbQ}`` X\parallel Y$ in $\PJ$ for all $Y\in \PJmu$".
\end{lemma}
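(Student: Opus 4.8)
The plan is to unwind compatibility in tower forcing and then reduce to ruling out a single potential counterexample. Recall first that for any tower $\langle K_\alpha\rangle$ with maps $x\mapsto x\cap V_\alpha$, two conditions $A\in K_a^+$ and $B\in K_b^+$ are compatible if and only if $\pi^{-1}_{c,a}(A)\cap\pi^{-1}_{c,b}(B)\in K_c^+$ for $c=\max(a,b)$ (the forward direction takes the intersection as a common lower bound; the converse is a direct ideal computation, since any common lower bound pushed to level $c$ lies modulo $K_c$ inside both preimages). Applying this to $\mcalJ$ in $V^{\mbbQ}$, it suffices to show there is no $p\in\mbbQ$ and $\mbbQ$-name $\dot Y$ with $p\Vdash_\mbbQ\dot Y\in\PJmu$ and $p\Vdash_\mbbQ X\perp\dot Y$ in $\PJ$. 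Set $a=\supp(X)$ (note $X$ remains a condition in $\PJ$, as $X\in I_a^+$ forces $X\in J_a^+$). Before fixing the data I would pin down the support level: since $p$ forces $\supp(\dot Y)<\mu$, the $\kappa$-c.c.\ of $\mbbQ$ together with the regularity of $\mu$ yields a single $\beta<\mu$ with $p\Vdash_\mbbQ\supp(\dot Y)<\beta$, and projecting $\dot Y$ up to level $\beta$ by Remark~\ref{liftup} (which preserves its $\PJ$-equivalence class) I may assume $p\Vdash_\mbbQ\dot Y\in\dot J_\beta^+$. Put $c=\max(a,\beta)$.

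The crux is to manufacture a ground-model condition in $\PImu$ out of the name $\dot Y$. I would set
\[
Y^*=\{y\in\mcalP_\kappa(V_\beta)^V\mid \exists q\leq_\mbbQ p\ (q\Vdash_\mbbQ y\in\dot Y)\},
\]
the set of possible elements of $\dot Y$ below $p$. By construction $p\Vdash_\mbbQ\dot Y\subseteq\check{Y^*}$, so if $Y^*$ lay in $I_\beta$ we would get $p\Vdash_\mbbQ\dot Y\in\dot J_\beta$, contradicting $p\Vdash_\mbbQ\dot Y\in\dot J_\beta^+$; hence $Y^*\in I_\beta^+$, that is $Y^*\in\PImu$. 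Now the hypothesis of the lemma applies: $X\parallel Y^*$ in $\PI$, which by the criterion above means
\[
\pi^{-1}_{c,a}(X)\cap\pi^{-1}_{c,\beta}(Y^*)\in I_c^+.
\]

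It remains to contradict the assumed incompatibility. From $p\Vdash_\mbbQ X\perp\dot Y$ I read off $p\Vdash_\mbbQ \pi^{-1}_{c,a}(X)\cap\pi^{-1}_{c,\beta}(\dot Y)\in\dot J_c$, and Lemma~\ref{kappacc}(1), applied to the ideal $I_c$ and this name, produces a single ground-model set $W\in I_c$ with $p\Vdash_\mbbQ \pi^{-1}_{c,a}(X)\cap\pi^{-1}_{c,\beta}(\dot Y)\subseteq\check W$. Then $B=\bigl(\pi^{-1}_{c,a}(X)\cap\pi^{-1}_{c,\beta}(Y^*)\bigr)\setminus W$ lies in $I_c^+$ (a positive set minus an ideal set), so it is nonempty. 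Picking $x\in B$, membership $x\cap V_\beta\in Y^*$ supplies some $q\leq_\mbbQ p$ with $q\Vdash_\mbbQ x\cap V_\beta\in\dot Y$; since also $x\cap V_a\in X$, this $q$ forces $x\in\pi^{-1}_{c,a}(X)\cap\pi^{-1}_{c,\beta}(\dot Y)\subseteq W$, giving $x\in W$, contrary to $x\notin W$. This contradiction finishes the argument.

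The step I expect to be most delicate is the interface between the ground model and the extension: the hypothesis furnishes compatibility of $X$ only with \emph{ground-model} conditions, whereas $\PJmu$ contains genuinely new positive sets, so the whole proof hinges on replacing the name $\dot Y$ by the ground-model support set $Y^*$ and on the fact that $\dot J_c$ is generated by $I_c$. That genericity-free generation is exactly Lemma~\ref{kappacc}(1), whose use requires the $\kappa$-c.c.\ of $\mbbQ$; the same chain condition is what lets me fix the bound $\beta<\mu$ at the outset. Everything else (the compatibility criterion and the manipulation of preimages) is routine once $Y^*$ is in hand.
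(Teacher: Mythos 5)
Your proof is correct and takes essentially the same route as the paper's: you replace the name $\dot Y$ by its ground-model trace $Y^*=\{y\mid\exists q\leq p\,(q\Vdash_{\mbbQ} y\in\dot Y)\}$, note it is $I_\beta$-positive and hence compatible with $X$ by hypothesis, and then use Lemma~\ref{kappacc}(1) to cover the forced intersection by a single set in $I_c$, contradicting the existence of a positive point outside that cover. The only cosmetic differences are that you fix the support bound $\beta<\mu$ uniformly via the $\kappa$-c.c.\ and the regularity of $\mu$ rather than by extending $p$ to a $p'$ deciding $\bigcup\dot Y$, and that you spell out the compatibility criterion at level $c=\max(a,\beta)$ which the paper uses implicitly.
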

\begin{proof}
    Let $\alpha\in[\mu,\lambda)$ be the support of $X$.
    Fix $p\in\mbbQ$ and $\mbbQ$-name $\dot{Y}$ such that 
    $p\Vdash_{\mbbQ} \dot{Y}\in \PJmu$.
    We show that there is $q\leq_{\mbbQ} p$ such that $q\Vdash_{\mbbQ} ``X\parallel \dot{Y}$ in $\PJ$".
    We fix $p'\leq_{\mbbQ}p$ and $\beta\in[\kappa,\mu)$ such that
    $p'\Vdash_{\mbbQ} \bigcup\dot{Y}=V_\beta$.
    Let
    \begin{center}
        $Y=\{x\in\mcalP_\kappa (V_\beta)^V\mid \exists r\leq_{\mbbQ} p' (r\Vdash_{\mbbQ} x\in \dot{Y})\}$.
    \end{center}
    Note that $p'\Vdash_{\mbbQ} \dot{Y}\subseteq Y$ and $Y\in I^+_\beta$.
    By assumption, $X$ is compatible with $Y$.

    We show that there is $q\leq_{\mbbQ} p'$ such that $q\Vdash_{\mbbQ} ``X\parallel \dot{Y}$ in $\PJ$".
    Assume not. Then $p'$ forces that $X$ and $\dot{Y}$ are incompatible. That is,
    \begin{center}
    $p'\Vdash_{\mbbQ}\{x\in X\mid x\cap V_\beta\in\dot{Y}\}\in J_\alpha$.
    \end{center}
    By Lemma \ref{kappacc} there is $Z\in I_\alpha$ such that
    \begin{center}
    $p'\Vdash_{\mbbQ} \{x\in X\mid x\cap V_\beta\in\dot{Y}\}\subseteq Z$.
    \end{center}
    If $x\in X$ and $x\cap V_\beta \in Y$ then there is $r\leq_{\mbbQ} p'$ such that $r\Vdash_{\mbbQ} x\cap V_\beta \in \dot{Y}$. Thus
    $(r\Vdash_{\mbbQ}) x\in Z$.
    It implies that $\{x\in X\mid x\cap V_\beta \in Y\}\subseteq Z$,
    but it is impossible because the left-hand is in $I_\alpha^+$ and the right-side is in $I_\alpha$.
\end{proof}

\begin{lemma}\label{mastercondition}
    Let $\kappa$ be a successor cardinal and $\mu$ and $\lambda$ be inaccessible cardinals with $\kappa<\mu<\lambda$.
    Suppose that $\mcalI$ is a $\kappa$-tower on $\langle \mcalP_\kappa(V_\alpha)\mid\alpha\in[\kappa,\lambda)\rangle$ and 
    $\dot{\mcalJ}$ is a $\collkappa$-name of the tower generated by $\mcalI$ in $V^{\collkappa}$.
    Suppose that $\mcalI$ and $\mcalI\restrict\mu$ is presaturated.
    Assume that $X\in \mbbP_{\mcalI}$ and $X\parallel Y$ for all $Y\in \PImu$ and $X\Vdash_{\PI} ``\dot{G}\cap \mbbP_{\mcalI\restrict\mu}$ is $\mbbP_{\mcalI\restrict\mu}$-generic",
    where $\dot{G}$ is the canonical name of a  $\PI$-generic filter.
    Then $\Vdash_{\collkappa} `` X\Vdash_{\PJ} \dot{H}\cap \mbbP_{\dot{J}\restrict\mu}$ is $\mbbP_{\dot{J}\restrict\mu}$-generic, where $\dot{H}$ is the canonical name of $\PJ$-generic filter".
\end{lemma}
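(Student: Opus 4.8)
The plan is to verify, in $V^{\collkappa}$, the two clauses of the factorization characterization for the given condition $X$. Clause (1), that $X$ stays compatible with every element of $\PJmu$, is exactly the conclusion of Lemma \ref{compatible} with $\mbbQ=\collkappa$ (which is $\kappa$-c.c.), so no further work is needed there. The real task is clause (2): that $X$ forces $\dot H\cap\PJmu$ to be $\PJmu$-generic over $V^{\collkappa}$, where $\dot H$ is the canonical $\PJ$-generic name. My strategy is to transport this statement, through the duality of Lemma \ref{duality}, to the ground model, where the factorization of $\mcalI$ at $\mu$ witnessed by $X$ is already available by hypothesis.

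First I would record two forcing equivalences. Writing $\mbbB_\kappa$ for the completion of $\collkappa$ and $j\colon V\to M$ for the embedding associated to a $\PI$-generic $G$, the proof of Theorem \ref{presatpres} gives $\collkappa\ast\PJ\simeq\PI\ast j(\mbbB_\kappa^+)\simeq\PI\times\colllambda$, using $\mathrm{crit}(j)=\kappa$, $j(\kappa)=\lambda$ and $V_\lambda\subseteq M$. Since the generated tower is computed level by level, the tower generated by $\mcalI\restrict\mu$ in $V^{\collkappa}$ is precisely $\dot{\mcalJ}\restrict\mu$; as $\mcalI\restrict\mu$ is presaturated by hypothesis, the same argument at height $\mu$ yields $\collkappa\ast\PJmu\simeq\PImu\ast j_\mu(\mbbB_\kappa^+)\simeq\PImu\times\collmu$, where $j_\mu\colon V\to M_\mu$ is associated to a $\PImu$-generic. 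A short computation with the dense embedding $h$ of Lemma \ref{duality} shows $h(1,\check X)=(X,1)$, so under the first equivalence the event ``$X\in\dot H$'' corresponds to ``$X\in G$''.

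The heart of the argument is to check that these equivalences are compatible, i.e. that the square of dense embeddings
\[
\begin{CD}
\collkappa\ast\PJmu @>>> \collkappa\ast\PJ\\
@V{\simeq}VV @VV{\simeq}V\\
\PImu\times\collmu @>>> \PI\times\colllambda
\end{CD}
\]
commutes, where the bottom arrow is the natural inclusion coming from the factorization of $\mcalI$ at $\mu$ (for the $\PI$-factor) and from $\colllambda\simeq\collmu\times Coll(\omega,[\mu,\lambda))$ (for the collapse factor). Concretely, for a name $\dot S$ with $\Vdash\supp(\dot S)=\mu'<\mu$ the value $h(p,\dot S)$ has first coordinate of support $\mu'$, hence in $\PImu$, and second coordinate $[f]_{\dot G}$ with $f$ a function on $\mcalP_\kappa(V_{\mu'})^V$. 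Because $X$ witnesses the factorization of $\mcalI$ at $\mu$, below $X$ the ultrafilter $G^{\mu'}$ is already decided by $G\cap\PImu$ and coincides with the level-$\mu'$ ultrafilter of $j_\mu$; hence $[f]_{\dot G}$ lies in $j_\mu(\mbbB_\kappa^+)$ and $h$ restricted to the $\mu$-supported conditions agrees with the height-$\mu$ duality map. This coherence is the step I expect to be the main obstacle, since it forces one to track the explicit formula for $h$ and to use the factorization precisely to identify the low-level generic ultrafilters.

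Granting the commuting square, the conclusion is routine. Let $g$ be $\collkappa$-generic and $H$ be $\PJ$-generic over $V[g]$ with $X\in H$. Passing through the right-hand equivalence gives a $\PI\times\colllambda$-generic $(G,K)$ with $V[g][H]=V[G][K]$ and, by the computation above, $X\in G$. The factorization of $\mcalI$ at $\mu$ then makes $G\cap\PImu$ a $\PImu$-generic over $V$, while $K$ restricts to a $\collmu$-generic, so $(G\cap\PImu,\,K\restrict\collmu)$ is $\PImu\times\collmu$-generic over $V$. By commutativity of the square this generic is the image of $g\ast(H\cap\PJmu)$, whence $H\cap\PJmu$ is $\PJmu$-generic over $V[g]$. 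As $g$ and $H\ni X$ were arbitrary, this gives $\Vdash_{\collkappa}``X\Vdash_{\PJ}\dot H\cap\PJmu\text{ is }\PJmu\text{-generic}"$, as required.
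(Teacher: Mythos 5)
Your overall architecture is the paper's: both arguments push the genericity statement through the dense embeddings of Lemma \ref{duality} into a commuting square and then use, on the other side, that $X$ already witnesses the factorization of $\mcalI$ at $\mu$ over $V$. Your only real structural variation is to compose with the further equivalence $\PI\ast\dot{\mbbB}^+_\lambda\simeq\PI\times\colllambda$ from Theorem \ref{presatpres}, so that the genericity transfer at the bottom of the square becomes a product-forcing argument ($G\cap\PImu$ is $\PImu$-generic by factorization, and $K\restrict\collmu$ is mutually generic with it by the product lemma); the paper keeps $\PI\ast\dot{\mbbB}^+_\lambda$ and instead argues that $(\dot{\mbbB}^+_\mu)^{K_0}$ is a complete subalgebra of $(\dot{\mbbB}^+_\lambda)^{H_0}$ (its Claim 3). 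These two routes are interchangeable and about equally long.

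The step you yourself flag as the main obstacle is, however, not actually established in your sketch, and the inference you offer for it is not valid as stated. You argue that since the level-$\mu'$ ultrafilters derived from $G$ and from $G\cap\PImu$ coincide below $X$, the element $[f]_{\dot G}$ ``lies in $j_\mu(\mbbB^+_\kappa)$'' and the two duality maps cohere. But $[f]_{H}$ and $[f]_{H\cap\PImu}$ live in two \emph{different} generic ultrapowers; agreement of the ultrafilters level by level only tells you the two models satisfy the same \L o\'s statements about such $f$, not that their transitive collapses assign $f$ the same set. What is actually needed is that the factor embedding $i\colon Ult(V;H\cap\PImu)\rightarrow Ult(V;H)$, $[g]_{H\cap\PImu}\mapsto[g]_{H}$, restricts to the identity on $H_\mu^{Ult(V;H\cap\PImu)}$, so that $j_\mu(\mbbB_\kappa)\subseteq j_\lambda(\mbbB_\kappa)$ and $[f]_{H}=[f]_{H\cap\PImu}$ hold literally. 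The paper proves this (Claim 2 of its proof) by exhibiting the canonical functions $\chi_\alpha(x)=\mathrm{o.t.}(x\cap\alpha)$, which represent each $\alpha<\mu$ in \emph{both} ultrapowers and hence force $\mathrm{crit}(i)\geq\mu$, together with $\dot{\mbbB}_\mu\subseteq H_\mu$; note also that the presaturation of $\mcalI\restrict\mu$ is used here to know that the smaller ultrapower is well-founded before collapsing. Once this coherence lemma is supplied, the rest of your argument (including the identification of the preimage of $g\ast H$ under the top arrow with $g\ast(H\cap\PJmu)$, corresponding to the paper's Claims 1 and 4) goes through.
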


\begin{proof}
    Let $\mbbB_\kappa$ be a complete Boolean algebra in which $Coll(\omega,<\kappa)$ is dense as a sub algebra.
    Since $Coll(\omega,<\kappa)$ has $\kappa$-c.c., we may take such $\mbbB_\kappa$ with $\mbbB_\kappa \subseteq H_\kappa$.
    Note that we can consider $\dot{\mcalJ}$ as $\mbbB_\kappa^+$-name naturally.

    It suffices to prove that
    \begin{center}
        $\Vdash_{\mbbB_\kappa^+} ``\dot{\mcalE}$ is predense in $\PJ$ below $X$"
    \end{center}
    for all $\mbbB^+_\kappa$-name $\dot{\mcalE}$ such that 
     $\Vdash_{\mbbB^+_\kappa} ``\dot{\mcalE}$ is predense in $\PJmu$ ".

     Through this proof, let $\dot{G}, \dot{H}$ be canonical names of $\PImu, \PI$-generic filters respectively.
     Let $j_\lambda, j_\mu$ be ultrapower maps by $\dot{G}, \dot{H}$ respectively.
     We fix a $\PI$-name $\dot{\mbbB}_\lambda$ of $j_\lambda (\mbbB_\kappa)$ and 
     a $\PImu$-name $\dot{\mbbB}_\mu$ of $j_\mu (\mbbB_\kappa)$.
     Then by elementarity and Lemma \ref{presat},
     \begin{center}
         $\Vdash_{\PI} ``Coll(\omega,<\lambda)$ is dense in $\dot{\mbbB}^+_\lambda$ and $\dot{\mbbB}_\lambda \subseteq H_\lambda$"
     \end{center}
     and
    \begin{center}
         $\Vdash_{\PImu} ``Coll(\omega,<\mu)$ is dense in $\dot{\mbbB}^+_\mu$ and $\dot{\mbbB}_\mu \subseteq H_\mu$".
    \end{center}

    By Lemma \ref{duality} we have dense embeddings
        $h_\lambda\colon \mbbB^+_\kappa\ast \PJ \rightarrow\PI\ast\dot{\mbbB}^+_\lambda$ and
        $h_\mu\colon \mbbB^+_\kappa\ast \PJmu \rightarrow\PImu\ast\dot{\mbbB}^+_\mu$.

    Let $\gamma>\mu$ be the support of $X$.
    For $S,T\in \PI$ we define
    \begin{center}
     $S\land T=\{x\in\mcalP_\kappa (\bigcup S\cup \bigcup T)\mid x\cap\bigcup S\in S \land
     x\cap\bigcup T\in T\}$.
    \end{center}

    We define $k_0\colon \mbbB^+_\kappa\ast\PJmu\rightarrow \mbbB^+_\kappa\ast\PJ$ by $k_0((p,\dot{Z}))=(p, \dot{Z}\land X)$.
    Note that $\Vdash_{\mbbB^+_\kappa} \dot{Z} \parallel X$ for all $\mbbB^+_\kappa$-name $\dot{Z}$ of a condition of $\PJmu$ by Lemma \ref{compatible}. 

    To define $k_1\colon \PImu\ast\dot{\mbbB}^+_\mu\rightarrow
    \PI\ast\dot{\mbbB}^+_\lambda$,
    fix $(Z,\dot{p})\in \PImu\ast\dot{\mbbB}^+_\mu$.
    Since $Z\Vdash_{\PI} \dot{p}\in j_\lambda (\dot{\mbbB}^+_\kappa)$,
    there are $\PImu$-names 
    $\dot{f}$ and $\dot{\eta}$
    such that 
    \begin{center}$Z\Vdash_{\PImu} ``\dot{f}\in V \colon \mcalP_\kappa(V_{\dot{\eta}})\rightarrow\mbbB_\kappa $ and $[\dot{f}]_{\dot{G}} =\dot{p}$ ".
    \end{center}
    We define 
    $k_1 ((Z,\dot{p})=(Z\land X, [\dot{f}]_{\dot{H}})$.
    \vskip0.5\baselineskip
    \begin{center}
    $
    \begin{CD}
        \mathbb{B}^+_\kappa\ast \PJ @>{h_\lambda}>>
        \PI \ast\dot{\mathbb{B}}^+_\lambda \\
        @A{k_0}AA    @A{k_1}AA \\
        \mathbb{B}^+_\kappa\ast \PJmu @>{h_\mu}>>
        \PImu \ast\dot{\mathbb{B}}^+_\mu
    \end{CD}
    $
    \end{center}
    \vskip0.5\baselineskip
    We show that $k_1\circ h_\mu$ and $h_\lambda \circ k_0$ are almost same in the following sense.
    \vskip0.5\baselineskip
    \textbf{Claim 1.}
    $k_1 \circ h_\mu ((p,\dot{Z}))\sim_{\PI\ast\dot{\mbbB}^+_\lambda}h_\lambda\circ k_0((p,\dot{Z}))$ 
      for $(p,\dot{Z})\in \mbbB^+_\kappa \ast \PJmu$,
      where $(p_0, \dot{Z_0})\sim_{\PI\ast\dot{\mbbB}^+_\lambda} (p_1, \dot{Z_1})$ if and only if $(p_0, \dot{Z_0})\leq_{\PI\ast\dot{\mbbB}^+_\lambda}(p_1, \dot{Z_1})$ and $(p_0, \dot{Z_0})\geq_{\PI\ast\dot{\mbbB}^+_\lambda}(p_1, \dot{Z_1})$.
      \vskip0.5\baselineskip
      \textbf{Proof of Claim 1.}
      Let $(p,\dot{Z})\in \mbbB^+_\kappa \ast \PJmu$ and $\eta<\mu$ be the minimum ordinal such that $p\Vdash_{\mbbB^+_\kappa} \mathrm{supp}(\dot{Z})\leq\eta$.
      Let $\dot{Z'}$ be a $\mbbB^+_\kappa$-name of $\pi^{-1}_{\eta,{\mathrm{supp}({\dot{Z}}})}(\dot{Z})$.
      Note that 
      $p\Vdash_{\mbbB^+_\kappa} \mathrm{supp}(\dot{Z'}) = \eta$.
      Let
      \begin{center}
          $S=\{x\in\mcalP_\kappa(V_\eta)\mid\bool{x\in\dot{Z'}}\parallel p\}, \ 
          T=\{x\in\mcalP_\kappa(V_\gamma)\mid\bool{x\in\dot{Z}\land X}\parallel p\}$,
      \end{center}
          $f\colon \mcalP_\kappa(V_\eta)\rightarrow \mbbB_\kappa$ defined by $f(x)=\bool{ x\in \dot{Z'} }\land p$
          and 
          $g\colon \mcalP_\kappa(V_\gamma)\rightarrow \mbbB_\kappa$ defined by $f(x)=\bool{x\in\dot{Z}\land X}\land p$.
          Then by the construction of $h_\mu$ and $h_\lambda$ in the proof of Lemma \ref{duality},
          $h_\mu ((p,\dot{Z}))=(S,[f]_{\dot{G}})$ and 
          $h_\lambda (k_0 ((p,\dot{Z}))) =h_\lambda((p,\dot{Z}\land X))=(T,[g]_{\dot{H}})$.
          Since $p\Vdash_{\mbbB^+_\kappa} \dot{Z}\land X =\dot{Z'}\land X$, it follows that $T=S\land X$ and
          $g(x)=f(x\cap V_\eta)$ for $x\in X$. So, $X\Vdash_{\PI} [g]_{\dot{H}}=[f]_{\dot{H}}$.
          \hfill$\blacksquare$
              \vskip0.5\baselineskip
    \textbf{Claim 2.}
    Let $\dot{\eta}, \dot{f}$ be $\PImu$-names such that $\Vdash_{\PImu} \dot{f}\colon \mcalP_\kappa(V_{\dot{\eta}})\rightarrow \mbbB_\kappa$.
    Then $X\Vdash_{\PI} [\dot{f}]_{\dot{H}}=[\dot{f}]_{\dot{H}\cap\PImu}$.
    \vskip0.5\baselineskip
    \textbf{Proof of Claim 2.}
Let $X\in H$ be a $\PI$-generic filter and $f= \dot{f}^{H\cap \PImu}$.

We show that $[f]_{H}=[f]_{H\cap \PImu}$.
We define 
          an elementary embedding 
          \begin{center}
          $i\colon Ult(V;H\cap \PImu)\rightarrow Ult(V;H)$
          \end{center}
          by $i([g]_{H\cap\PImu})=[g]_H$.
          Note that there is a canonical function which represents $\alpha$ for each $\alpha<\mu$.
          More precisely, let $\chi_\alpha\colon \mcalP_\kappa (V_\eta)\rightarrow\alpha$ be $\chi_\alpha (x)=o.t.(x\cap\alpha)$ then 
          $\Vdash_{\PI} [\chi_\alpha]_{\dot{H}}=\alpha$ and $X\Vdash_{\PI} [\chi_\alpha]_{\dot{H}\cap\PImu}=\alpha$.
          So crit$(i)\geq \mu$ and
          $i\restrict H_\mu^{V[H\cap \PImu]}$ is identity.
          Since $\dot{\mbbB}^{H\cap\PImu}_\mu \subseteq H_\mu^{V[H\cap\PImu]}$ and $[f]_{H\cap\PImu} \in \mbbB_\mu^{V[H\cap\PImu]}$, $[f]_H =
          i([f]_{H\cap\PImu})=[f]_{H\cap\PImu}$ holds.
          \hfill$\blacksquare$
          
          \vskip0.5\baselineskip
          By Claim 2, it follows that 
          if $k_1 ((Z,\dot{p}))=(Z\land X,[\dot{f}]_{\dot{H}})$ then 
          \begin{center}
          $Z\land X\Vdash_{\mbbB^+_\kappa} [\dot{f}]_{\dot{H}}=\dot{p}$.
          \end{center}
          Also, 
          $X\Vdash_{\PI} \dot{\mbbB}_\mu \subseteq \dot{\mbbB}_\lambda$ because $\dot{\mbbB}_\lambda$ and $\dot{\mbbB}_\mu$ are names of $j_\lambda(\mbbB_\kappa)$ and $j_\mu(\mbbB_\kappa)$.

          \vskip0.5\baselineskip
          \textbf{Claim 3.} Let $\mcalE$ be a predense subset of $\PImu\ast \dot{\mbbB}^+_\mu$.
          Then $k_1[\mcalE]$ is predense in $\PI\ast\mbbB^+_\lambda$ below $(X,\dot{1}_{\dot{\mbbB}^+_\lambda})$.
          \vskip0.5\baselineskip
          \textbf{Proof of Claim 3.}
          Let $H$ be an $\PI\ast\dot{\mbbB}^+_\lambda$-generic filter over $V$ with $(X,\dot{1}_{\dot{\mbbB}^+_\lambda})\in H$. 
          It suffice to prove that ${k_1}^{-1}[H]$ is $\PImu\ast\dot{\mbbB}^+_\mu$-generic over $V$.

          Work in $V[H]$.
          Let
          \begin{center}
            $H_0 =\{Y\in\PI\mid (Y,\dot{1}_{\dot{\mbbB}^+_\lambda})\in H\}$, 
              $H_1=\{\dot{p}^{H_0}\in (\dot{\mbbB}^+_\mu)^{H_0}\mid \exists Y\in H_0 \ ((Y,\dot{p})\in H)\}$.
          \end{center}
          Note that  $H=\{(Y,\dot{p})\mid Y\in {H_0}  \land  \dot{p}^{H_0} \in H_1\}$.

          Let $K_0 =H_0\cap \PImu =\{Z\in \PImu\mid Z\land X\in H_0\}$. Then $K_0$ is $\PImu$-generic over $V$.
          Let $K_1 = H_1 \cap (\dot{\mbbB}^+_\mu)^{K_0}$.

          We claim that $K_1$ is $(\dot{\mbbB}^+_\mu)^{K_0}$-generic over $V[K_0]$.
          In $V[H_0]$, $(\dot{\mbbB}^+_\mu)^{K_0} \subseteq (\dot{\mbbB}^+_\lambda)^{H_0}$ as the observation below Claim 2.
          Since $Coll(\omega,<\mu)$ is a complete subalgebra of $Coll(\omega,<\lambda)$, $({\dot{\mbbB}^+_\mu})^{K_0}$ is complete subalgebra of $({\dot{\mbbB}^+_\lambda})^{H_0}$.
          So $K_1$ is $(\dot{\mbbB}^+_\mu)^{K_0}$-generic over $V[H_0]$, in particular over $V[K_0]$.

          Thus $K_0 \ast K_1$ is $\PImu\ast \dot{\mbbB}^+_\mu$-generic over $V$.
          We show that $(Z,\dot{p})\in K_0\ast K_1$ if and only if $k_1 ((Z,\dot{p})) \in H_0\ast H_1$.
          Fix $(Z,\dot{p})\in K_0\ast K_1$ and let $k_1((Z,\dot{p}))=(Z\land X, \dot{p}^*)$.
          Note that $Z\land X \Vdash_{\PI} \dot{p}=\dot{p}^*$.
          \begin{align*}
              (Z,\dot{p})\in K_0\ast K_1 
              &\Leftrightarrow
              Z\land X\in H_0 \land \dot{p}^{K_0}\in H_1\\
              &\Leftrightarrow
              Z\land X\in H_0 \land (\dot{p}^*)^{H_0} \in H_1\\
              &\Leftrightarrow
              (Z\land X, \dot{p}^*)\in H_0\ast H_1.
          \end{align*}
          This completes the proof.
          \hfill$\blacksquare$
          \vskip0.5\baselineskip
          \textbf{Claim 4.}
          Let $\mcalD$ be a predense subset of $\mbbB^+_\kappa\ast \PImu$,
          then $k_0[\mcalD]$ is predense in $\mbbB^+_\kappa\ast\PI$ below $(1_{\mbbB^+_\kappa}, X)$.

          \vskip0.5\baselineskip
          \textbf{Proof of Claim 4.}
          $h_\mu[\mcalD]$ is dense in $\PImu\ast\dot{\mbbB}^+_\mu$, so $k_1[h_\mu[\mcalD]]$ is predense in $\PI\ast\dot{\mbbB}^+_\lambda$ below $(X,1_{\dot{\mbbB}^+_\lambda})$ by Claim 3.
          By Claim 1, $h_\lambda[k_0[\mcalD]]\downarrow =
          k_1[h_\mu[\mcalD]]\downarrow$.
          Thus $h_\lambda[k_0[\mcalD]]$ is predense in $\PI\ast\dot{\mbbB}^+_\lambda$ below $(X,1_{\dot{\mbbB}^+_\lambda})$.
          Since $h_\lambda((1_{\mbbB^+_\kappa\ast\PJ},X))=
          (X,1_{\dot{\mbbB}^+_\lambda})$ by definition of $h_\lambda$,
          $k_0[\mcalD]$ is predense in $\mbbB^+_\kappa\ast \PJ$ below $(1_{\mbbB^+_\kappa}, X)$.
          \hfill$\blacksquare$

          \vskip0.5\baselineskip
          Let $\dot{\mcalE}$ be a $\mbbB^+_\kappa$-name  such that 
     $\Vdash_{\mbbB^+_\kappa} ``\dot{\mcalE}$ is predense in $\PJmu$ ".
          We show that
        $\Vdash_{\mbbB_\kappa^+} ``\dot{\mcalE}$ is predense in $\PJ$ below $X$".

        Let $\mcalD_{\dot{\mcalE}}=\{(p,\dot{Z})\mid p\Vdash_{\mbbB^+_\kappa} \dot{Z}\in\dot{\mcalE}\}$. $\mcalD_{\dot{\mcalE}}$ is predense in $\mbbB^+_\kappa\ast \PJmu$, so
        $k_0 [\mcalD_{\dot{\mcalE}}]$ is predense in $\mbbB^+_\kappa\ast\PI$ below $(1_{\mbbB^+_\kappa}, X)$ by Claim 4.
        We define 
        \begin{center}
        ${\dot{\mcalE'}}=\{(\dot{Y}, p)\mid (p,\dot{Y})\in k_0[\mcalD_{\dot{\mcalE}}]\}=\{(\dot{Z}\land X,p)\mid p\Vdash_{\mbbB^+_\kappa} \dot{Z}\in \dot{\mcalE}\}$.
        \end{center}
        The predensity of $k_0[\mcalD_{\dot{\mcalE}}]$ implies that $\Vdash_{\mbbB^+_\kappa} ``\dot{\mcalE'}$ is predense in $\PJ$ below $X$".
        Since $\Vdash_{\mbbB^+_\kappa} ``\dot{\mcalE}'=\{Z\land X\mid Z\in \dot{\mcalE}\}"$,
        it follows that $\Vdash_{\mbbB^+_\kappa} ``\dot{\mcalE}$ is predense in $\PJ$ below $X$".
        \end{proof}

        From Lemma \ref{compatible} and \ref{mastercondition}, we obtain the following theorem.

        \begin{theorem}
  Let $\kappa$ be a successor cardinal,
  $\mu$ and $\lambda$ be inaccessible cardinals with $\kappa<\mu<\lambda$,
  $\mcalI$ be a $\kappa$-tower on $\langle \mcalP_\kappa(V_\alpha)\mid \alpha\in [\kappa,\lambda)\rangle$ and
  $\dot{\mcalJ}$ be a $\collkappa$-name of the tower generated by $\mcalI$.
  Suppose that $\mcalI$ and $\mcalI\restrict \mu$ are presaturated,
  and $\mcalI$ factors at $\mu$.
  Then $\Vdash_{\collkappa} `` \dot{\mcalJ}$ factors at $\mu$ ".
 \end{theorem}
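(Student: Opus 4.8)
The plan is to produce a single condition of $\PJ$ that witnesses factoring at $\mu$ in $V^{\collkappa}$, and to read off its two defining properties from Lemma~\ref{compatible} and Lemma~\ref{mastercondition} respectively. The key observation is that the characterization of factoring (the lemma immediately preceding Lemma~\ref{compatible}) reduces the whole statement to the existence of one condition with two separate properties, and those two properties are exactly what the two preceding lemmas are designed to preserve.

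First I would unpack the hypothesis that $\mcalI$ factors at $\mu$. By the characterization lemma there is a condition $X\in\PI$ such that (i) $X\parallel Y$ in $\PI$ for every $Y\in\PImu$, and (ii) $X\Vdash_{\PI}``\dot{G}\cap\PImu$ is $\PImu$-generic''. Let $\gamma=\supp(X)$, so $X\in I_\gamma^+$. Since the ideal $J_\gamma$ of the generated tower is generated by $I_\gamma$, every $I_\gamma$-positive set is $J_\gamma$-positive; hence $X$ remains a condition of $\PJ$ in $V^{\collkappa}$. My claim is that this same $X$ witnesses, in $V^{\collkappa}$, that $\mcalJ$ factors at $\mu$.

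Next I would verify the two clauses of the characterization lemma for $X$, now viewed as a condition of $\PJ$ over $V^{\collkappa}$. The poset $\collkappa$ has the $\kappa$-c.c.: a standard $\Delta$-system argument shows that any $\kappa$-sized family of (finite) conditions contains a $\Delta$-system of size $\kappa$ with finite root $r$, and since there are strictly fewer than $\kappa$ possible assignments on $r$, two members agree on $r$ and are compatible. Therefore Lemma~\ref{compatible}, applied with $\mbbQ=\collkappa$ and the condition $X$ from (i), yields $\Vdash_{\collkappa}``X\parallel Y$ in $\PJ$ for all $Y\in\PJmu$'', which is the first clause. For the second clause, the hypotheses of Lemma~\ref{mastercondition} are precisely our standing assumptions, namely that $\mcalI$ and $\mcalI\restrict\mu$ are presaturated and that $X$ satisfies (i) and (ii); that lemma then gives directly $\Vdash_{\collkappa}``X\Vdash_{\PJ}\dot{H}\cap\PJmu$ is $\PJmu$-generic''.

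Finally I would assemble these pieces. Since $|\collkappa|=\kappa<\mu<\lambda$, the collapse is small relative to both $\mu$ and $\lambda$, so $\mu$ and $\lambda$ remain inaccessible in $V^{\collkappa}$; moreover $\mcalJ$ is a $\kappa$-tower of height $\lambda$ there by Lemma~\ref{generated}, and $\PJmu$ is the tower forcing of the tower generated by $\mcalI\restrict\mu$. Thus in any $\collkappa$-generic extension $V[G]$ the condition $X$ satisfies both clauses of the characterization lemma for $\mcalJ$, so $\mcalJ$ factors at $\mu$ in $V[G]$; as $G$ is arbitrary, $\Vdash_{\collkappa}``\dot{\mcalJ}$ factors at $\mu$''. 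The only points needing care are the bookkeeping just described, namely that $X$ survives as a condition and that $\mu,\lambda$ stay inaccessible under the small collapse, together with the observation that the defining properties of $X$ split cleanly into the inputs of Lemma~\ref{compatible} and Lemma~\ref{mastercondition}. The substantive work has already been carried out in those two lemmas, so I do not expect any genuine obstacle here beyond this assembly.
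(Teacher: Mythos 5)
Your proposal is correct and follows exactly the route the paper intends: the paper's "proof" of this theorem is just the one-line remark that it follows from Lemma~\ref{compatible} and Lemma~\ref{mastercondition} via the characterization of factoring by a single condition $X$, and your write-up supplies precisely that assembly (including the minor bookkeeping that $X$ stays $J_{\gamma}$-positive and that $\mu,\lambda$ remain inaccessible after the small collapse). No gaps.
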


\section{Main Theorem}
  In this section we prove the main theorem, Theorem \ref{maintheorem}.
  That proof is based on the proof of Theorem 3.3.15 in Larson \cite{larson}.

  \let\temp\thetheorem
\renewcommand{\thetheorem}{\ref{maintheorem}}

  \begin{theorem}
  Let $\lambda$ be an inaccessible cardinal.
  Suppose that 
  $\mcalI^\kappa_\lambda$ is presaturated for unboundedly many successor cardinals $\kappa<\lambda$
  and 
  there are unboundedly many inaccessible cardinals $\mu<\lambda$ such that $\mcalI^\kappa_\mu$ is presaturated.
  Then every set of reals in $L(\mbbR)$ is $\lambda$-universally Baire.
 \end{theorem}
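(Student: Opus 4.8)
The plan is to verify the criterion for $\lambda$-universal Baireness established in Section~2: for a given $A\subseteq\mbbR$ in $L(\mbbR)$ it suffices to produce trees $S,T$ with $p[S]=A$ and $p[T]=\mbbR\setminus A$ such that for every $\nu<\lambda$ there is $\kappa\in[\nu,\lambda)$ with $\Vdash_{\collkappa}``p[S]\cup p[T]=\mbbR"$. By hypothesis the successor cardinals $\kappa<\lambda$ for which $\mcalI^\kappa_\lambda$ is presaturated and admits unboundedly many inaccessible $\mu<\lambda$ with $\mcalI^\kappa_\mu$ presaturated are unbounded in $\lambda$; so it is enough to obtain complementation after $\collkappa$ for each such $\kappa$. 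Fixing a formula $\phi$ and a real $a$ with $x\in A\Leftrightarrow L(\mbbR)\models\phi(x,a)$, I would let $S$ and $T$ be the trees searching, respectively, for a real $x$ together with a certificate that $L(\mbbR)\models\phi(x,a)$ and that $L(\mbbR)\models\neg\phi(x,a)$, a certificate being a code for a stationary-tower generic object and a witness to the corresponding $L(\mbbR)$-truth in the resulting model, arranged so that ill-foundedness of the relevant section encodes the existence of the certificate.

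The correctness of these trees and their complementation after the collapses both factor through one generic absoluteness statement, which is the role of the main lemma (Lemma~\ref{absolute}). To set it up for a fixed admissible $\kappa$ I would pass to $V[g]$ for a $\collkappa$-generic $g$, so that $\kappa=\omega_1^{V[g]}$. The tower $\mcalJ$ generated by $\mcalI^\kappa_\lambda$ in $V[g]$ is presaturated by Theorem~\ref{presatpres}, and it factors at unboundedly many inaccessible $\mu<\lambda$: indeed $\mcalI^\kappa_\lambda$ factors at each such $\mu$ by Theorem~\ref{imply} (using that $\mcalI^\kappa_\mu$ is presaturated), and this is preserved by $\collkappa$ by the preservation theorem of Section~5. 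Since the ultrapower embedding $j$ associated with $\PJ$ over $V[g]$ has $\mathrm{crit}(j)=\omega_1^{V[g]}$ and $j(\omega_1^{V[g]})=\lambda$ by Fact~\ref{presat}, Corollary~\ref{addedreal} applies inside $V[g]$: in a suitable further extension, the reals of the tower extension coincide with those of a $\colllambda$-generic over $V[g]$.

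With this in hand complementation in $V[g]$ follows. Given $r\in\mbbR^{V[g]}$, I would force with $\PJ$ over $V[g]$ to obtain a well-founded ultrapower $M$ (Fact~\ref{wellfdd}) with $\mbbR^M=\mbbR^{V[g][G]}$; Lemma~\ref{absolute}, whose proof identifies $\mbbR^{V[g][G]}$ with the reals of a $\colllambda$-extension of $V[g]$ via Corollary~\ref{addedreal} and exploits the homogeneity of the L\'evy collapse, yields $L(\mbbR)^{V[g]}\prec L(\mbbR)^{V[g][G]}$. Elementarity then produces a certificate for $\phi(r,a)$ or for $\neg\phi(r,a)$ in $V[g][G]$, placing $r$ in $p[S]$ or $p[T]$ there; since the corresponding section of $S$ or $T$ is ill-founded in $V[g][G]$, by absoluteness of well-foundedness it is already ill-founded in $V[g]$, whence $r\in p[S]\cup p[T]$ in $V[g]$. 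As $r$ is arbitrary, $\Vdash_{\collkappa}``p[S]\cup p[T]=\mbbR"$, and ranging over the cofinally many admissible $\kappa$ completes the verification of the criterion.

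The main obstacle is the generic absoluteness of $L(\mbbR)$ packaged as Lemma~\ref{absolute}, and specifically the transfer from the opaque stationary-tower extension to the well-understood L\'evy-collapse extension. The delicate point is that presaturation and factorization must survive the preliminary collapse $\collkappa$ --- this is exactly what the Section~5 preservation theorems provide --- so that Corollary~\ref{addedreal} can be invoked over $V[g]$ rather than only over $V$; only then does the identification of $\mbbR^{V[g][G]}$ with the reals of a $\colllambda$-extension of $V[g]$ let the homogeneity of the collapse deliver the two-step elementarity of $L(\mbbR)$ on which the complementation of $S$ and $T$ ultimately rests. A secondary point to check carefully is that the certificate-search trees defined in $V$ genuinely have $p[S]=A$ and $p[T]=\mbbR\setminus A$ in $V$ itself, which again reduces to the case $G$ trivial of the same absoluteness.
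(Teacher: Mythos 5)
Your overall architecture matches the paper's: reduce to the L\'{e}vy-collapse criterion from Section 2, build certificate-searching trees, and drive everything through Lemma \ref{absolute}, whose proof uses exactly the chain you describe (Theorem \ref{imply} plus the Section 5 preservation results to get presaturation and factorization of the generated tower in $V[g]$, then Corollary \ref{addedreal} and homogeneity of the collapse). However, there are two genuine gaps. First, the heart of the paper's argument is missing: the trees are defined from a function $f$ witnessing that the \emph{good} countable elementary substructures $v\prec V_\delta$ (those whose transitive collapses compute $\phi$ correctly after small collapses) contain a club. Proving this club claim is a substantial separate step --- one assumes a stationary set of bad structures, applies normality to fix a witness $\langle\kappa,g,\dot{\tau}\rangle$, forces with $\mbbQ^{\omega_1}_\lambda$, lifts a structure $w$ to $j[w]\in M$, and derives a contradiction with Lemma \ref{absolute}. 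You dismiss the verification that $p[S]=A$ in $V$ as ``a secondary point'' reducible to ``the case $G$ trivial of the same absoluteness''; it is not --- without the club-of-good-structures argument there is no concrete tree definition and no proof of Claim 2, and your description of a certificate as ``a code for a stationary-tower generic object'' does not match any tree for which the correctness could be checked. The ordinal parameters are also not optional: a set in $L(\mbbR)$ is definable from reals \emph{and} ordinals, and the paper takes these ordinals to be strong limit cardinals of cofinality greater than $\lambda$ precisely so that the ultrapower embeddings fix them.

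Second, your complementation step is misplaced and, as stated, unjustified. In the paper, showing $V[g]\models p[S]\cup p[T]=\mbbR$ is a direct density construction inside $V[g]$: since each level of the substructure being built is finite, one meets the finitely many relevant dense sets using the genericity of $g$ over $V$; no further forcing is needed. Your route --- force with $\PJ$ over $V[g]$ and claim that ``elementarity produces a certificate for $\phi(r,a)$ or $\neg\phi(r,a)$ in $V[g][G]$, placing $r$ in $p[S]$ or $p[T]$ there'' --- begs the question: the agreement of $L(\mbbR)^{V[g]}$ with $L(\mbbR)$ of the tower extension tells you which of $\phi(r,\bar{x})$, $\neg\phi(r,\bar{x})$ holds, but it does not produce a branch through $S_r$ or $T_r$ in $V[g][G]$ unless you already know the trees project correctly there, which is exactly what is to be proved. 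The tower forcing and Lemma \ref{absolute} belong in the proof that the good structures form a club (hence $p[S]\subseteq A$ and $p[T]\subseteq\mbbR\setminus A$), not in the complementation step.
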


 \let\thetheorem\temp
\addtocounter{theorem}{-1}

 We use the following lemma.

 \begin{lemma}\label{absolute}
  Suppose the assumption of Theorem \ref{maintheorem}
  holds.
  Let $G$ be a $\mbbQ^{\omega_1}_\lambda$-generic filter over $V$ and $j \colon V\rightarrow M$ be the ultrapower map associated to $G$.
  Suppose that $\kappa<\lambda$ is a successor cardinal in $V$ such that $\mcalI^\kappa_\lambda$ is presaturated,
  and $g$ is a $Coll(\omega,<\kappa)$-generic filter over $V$ with $g\in V[G]$.
  Let $\phi$ be a formula and $\bar{x}$ be a finite sequence of reals in $V[g]$ and strong limit cardinals with the cofinality greater than $\lambda$.
  Then, in $V[G]$,
  \begin{center}
    $L(\mbbR)^M \models \phi(\bar{x}) \Leftrightarrow L(\mbbR)^{V[g]}\models\phi(\bar{x})$.
  \end{center}
 \end{lemma}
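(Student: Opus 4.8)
The plan is to realize both $L(\mbbR)^M$ and $L(\mbbR)^{V[g]}$ as $L(\mbbR)$ of a L\'evy collapse $\colllambda$ over the common ground model $V[g]$, and then to finish by the weak homogeneity of the L\'evy collapse, using that the parameters $\bar{x}$ all lie in $V[g]$. Since $L(\mbbR)$ of a transitive model is determined by its reals, and the real entries of $\bar{x}$ together with the ordinal entries (which are fixed by every embedding considered, being strong limit cardinals of cofinality greater than $\lambda$) already lie in $V[G]$, the truth value of $\phi(\bar{x})$ in each of $L(\mbbR)^M=L(\mbbR^{V[G]})$ and $L(\mbbR)^{V[g]}$ is absolute between $V[G]$ and its outer models. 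Hence I may pass to a forcing extension of $V[G]$ in which the auxiliary collapses below exist and argue there.

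First I would treat the side of $M$. By the presaturation of $\mcalI^{\omega_1}_\lambda$ and Fact~\ref{wellfdd}, $M$ is closed under $<\lambda$-sequences, so $\mbbR^M=\mbbR^{V[G]}$ and hence $L(\mbbR)^M=L(\mbbR^{V[G]})$. Since $\mcalI^{\omega_1}_\lambda$ factors at unboundedly many inaccessibles (Theorem~\ref{imply}), $\mathrm{crit}(j)=\omega_1$ and $j(\omega_1)=\lambda$, Corollary~\ref{addedreal} provides, in a further extension, a $\colllambda$-generic $h$ over $V$ with $\mbbR^{V[h]}=\mbbR^{V[G]}$, so that $L(\mbbR)^M=L(\mbbR)^{V[h]}$. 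As $\kappa$ is countable in $V[G]=V[h]$, the filter $g$ is coded by a real and so $g\in V[h]$; since $\collkappa\in V_\lambda$, Fact~\ref{collapsefactor} yields a $\colllambda$-generic $h'$ over $V[g]$ with $V[h]=V[g][h']$. Therefore $L(\mbbR)^M=L(\mbbR)^{V[g][h']}$.

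Next I would treat the side of $V[g]$. Let $\mcalJ$ be the tower generated by $\mcalI^\kappa_\lambda$ in $V[g]=V^{\collkappa}$. By Theorem~\ref{presatpres} $\mcalJ$ is a presaturated $\omega_1$-tower in $V[g]$, and by the preservation of factorization it factors at unboundedly many inaccessibles there. Forcing with $\PJ$ over $V[g]$ gives a generic $G'$ and an elementary $j'\colon V[g]\to M'$ with $\mathrm{crit}(j')=\omega_1^{V[g]}$ and $j'(\omega_1^{V[g]})=\lambda$ (Fact~\ref{presat}), and $M'$ is well-founded with $\mbbR^{M'}=\mbbR^{V[g][G']}$ (Fact~\ref{wellfdd}). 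Then $j'$ fixes every entry of $\bar{x}$: the real entries lie below the critical point, and the ordinal entries are fixed because they have cofinality greater than $\lambda$. Applying Corollary~\ref{addedreal} inside $V[g]$ produces a $\colllambda$-generic $h''$ over $V[g]$ with $\mbbR^{V[g][G']}=\mbbR^{V[g][h'']}$, whence, by elementarity of $j'$ together with $j'(\bar{x})=\bar{x}$,
\[
L(\mbbR)^{V[g]}\models\phi(\bar{x})\iff L(\mbbR)^{M'}\models\phi(\bar{x})\iff L(\mbbR)^{V[g][h'']}\models\phi(\bar{x}).
\]

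Finally, $h'$ and $h''$ are both $\colllambda$-generic over $V[g]$ and $\bar{x}\in V[g]$, so by the weak homogeneity of $\colllambda$ the Boolean value of ``$L(\mbbR)\models\phi(\bar{x})$'' is decided by the trivial condition and is in particular the same in $V[g][h']$ and $V[g][h'']$. Combining this with $L(\mbbR)^M=L(\mbbR)^{V[g][h']}$ and the displayed equivalences yields the lemma. The main obstacle I anticipate is the $V[g]$-side construction: one must verify that the generated tower $\mcalJ$ inherits both presaturation and factorization at unboundedly many inaccessibles so that Corollary~\ref{addedreal} is genuinely available inside $V[g]$, and that the induced embedding $j'$ really fixes the parameter tuple $\bar{x}$. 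Once both copies of $L(\mbbR)$ have been presented over the single ground model $V[g]$, the concluding homogeneity step is routine.
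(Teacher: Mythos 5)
Your proposal is correct and takes essentially the same route as the paper's proof: the paper likewise realizes $L(\mbbR)^M$ as $L(\mbbR)$ of a $Coll(\omega,<\lambda)$-extension of $V[g]$ via Corollary~\ref{addedreal} and Fact~\ref{collapsefactor}, realizes $L(\mbbR)^{V[g]}$ via the generated tower $\mcalJ^\kappa_\lambda$ and the elementarity of its ultrapower embedding (which fixes $\bar{x}$), and concludes by the weak homogeneity of the L\'evy collapse over $V[g]$, all carried out in an auxiliary collapse extension of $V[G]$. The only cosmetic difference is the order of the reductions (the paper first proves $L(\mbbR)^M\models\phi(\bar{x})\Leftrightarrow L(\mbbR)^N\models\phi(\bar{x})$ and then applies elementarity), which does not change the argument.
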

 
 \begin{proof}
  Let $K$ be a $Coll(\omega,(2^{2^\lambda})^{V[G]})$-generic filter over $V[G]$.
  In the following proof, we argue in $V[G][K]$.
  By the presaturation of $\mcalI^{\omega_1}_\lambda$ and Corollary \ref{addedreal}, 
  there is a $Coll(\omega,<\lambda)$-generic filter $h\in V[G][K]$ over $V$ such that $\mbbR^{V[G]}=\mbbR^{V[h]}$.

  Let $\mcalJ^\kappa_\lambda$ be the tower generated by $\mcalI^\kappa_\lambda$ in $V[g]$.
  Combining Theorem \ref{imply} and Theorem \ref{presatpres}, it follows from the assumption on $\mcalI^\kappa_\lambda$ that 
  \begin{center}
  $V[g]\models$ ``$\mcalJ^\kappa_\lambda$ is presaturated and 
  there are unboundedly many inaccessible cardinals $\mu<\lambda$ at which $\mcalJ^\kappa_\lambda$ factors". 
  \end{center}
  Let $H$ be a $\mbbP_{\mcalJ^\kappa_\lambda}$-generic filter over $V[g]$ and $k \colon V[g]\rightarrow N$ be the ultrapower map associated to $H$.
\vskip0.5\baselineskip
  \textbf{Claim 1.} 
  $L(\mbbR)^{M}\models \phi(\bar{x}) \Leftrightarrow L(\mbbR)^{N}\models \phi(\bar{x})$.
\vskip0.5\baselineskip
  \textbf{Proof of Claim 1.}
  Using Corollary \ref{addedreal},  we obtain a $Coll(\omega,<\lambda)$-generic filter $h'\in V[G][K]$ over $V[g]$ such that $\mbbR^{V[g][H]}=\mbbR^{V[g][h']}$.
  We note that $L(\mbbR)^M= L(\mbbR)^{V[G]}$ and $L(\mbbR)^N=L(\mbbR)^{V[g][H]}$ by Fact \ref{wellfdd}.
  
  Since $g$ is coded by a real in $V[G]$ and $\mbbR^{V[G]} =\mbbR^{V[h]}$,
  we have $g\in V[h]$.
  So,  we can take a $Coll(\omega, <\lambda)$-generic filter $h^\ast$ over $V[g]$ such that $V[h]=V[g][h^\ast]$.
  By the weakly homogeneity of L\`{e}vy collapses and $\bar{x}\in V[g]$, 
  \begin{center}
  $L(\mbbR)^{V[g][h^\ast]}\models \phi(\bar{x}) \Leftrightarrow  L(\mbbR)^{V[g][h']}\models \phi(\bar{x})$.
  \end{center}
  Since $L(\mbbR)^{V[g][h^*]}=L(\mbbR)^{V[h]}=L(\mbbR)^M$ and 
  $L(\mbbR)^{V[g][h']}=L(\mbbR)^{V[g][H]}=L(\mbbR)^{N}$, we obtain the required equivalence.
  \hfill$\blacksquare$
  \vskip0.5\baselineskip
  
  Note that $k(\mu)=\mu$ for all strong limit cardinal $\mu$ with cofinality greater than $\lambda$.
  The elementarity of $k$ lead to the equivalence that 
  \begin{center}
  $L(\mbbR)^{V[g]}\models \phi(\Bar{x}) \Leftrightarrow L(\mbbR)^N \models \phi(\bar{x})$.
  \end{center}
  Then, by Claim 1, we obtain the required equivalence.
 \end{proof}
Using Lemma \ref{absolute},
we can prove Theorem \ref{maintheorem} by the same argument as Theorem 3.3.15 of Larson \cite{larson}.
For the completeness of this paper, we give the proof.

 \begin{proof}[Proof of Theorem \ref{maintheorem}]
 
  Fix $A\in\mcalP(\mbbR)\cap L(\mbbR)$. 
  Let $E_{>\lambda}$ be the class of strong limit ordinals with cofinality greater than $\lambda$.
  Since $L(\mbbR)$ is the transitive collapse of the Skolem hull of $\mbbR\cup E_{>\lambda}$ in $L(\mbbR)$ 
  and $A$ is invariant under the transitive collapsing,
  there is a formula $\psi$ and  $\bar{x} \in (E_{>\lambda}\cup\mbbR)^{<\omega}$ such that $A$ is the unique set with $L(\mbbR)\models \psi(A,\bar{x})$.
  
  We define $\phi(r,\bar{x})$ to be 
  \begin{center}
  ``$L(\mbbR)\models\exists B (\psi(B,\bar{x})\land r\in B)$".
  \end{center}
  Then $A=\{r\in\mbbR\mid\phi(r,\bar{x})\}$.
  
  Let $\delta>\lambda$ be a limit cardinal with the cofinality greater than $\lambda$ such that $\bar{x}\in V_\delta$, and 
  \begin{center}
      $V[g]\models \phi(r,\bar{x}) 
      \Leftrightarrow
      V[g]_\delta \models \phi(r,\bar{x})$
  \end{center}
  for all successor cardinal $\kappa<\lambda$,
  $\collkappa$-generic filter $g$ over $V$ and $r\in \mbbR^{V[g]}$.
  We can take such $\delta$, because we can take a limit cardinal $\delta>\lambda$ with the cofinality greater than $\lambda$ such that 
  \begin{center}
      $p\Vdash_{\collkappa} \phi(\dot{r},\bar{x})
      \Leftrightarrow
      V_\delta\models ``p\Vdash_{\collkappa} \phi(\dot{r},\bar{x})"$
  \end{center}
  for all $p\in\collkappa$ and $\collkappa$-name $\dot{r}$ of a real
  by the reflection principle.

  Let $v\in[V_\delta]^\omega$. $v$ is said to be \textit{good} if the following are satisfied,
  \begin{itemize}
      \item $\bar{x},\lambda\in v\prec V_\delta$,
      \item Suppose that $\pi : v\rightarrow \bar{v}$ is the transitive collapse, $\kappa\in v\cap \lambda$ is a successor cardinal such that $\mcalI^\kappa_\lambda$ is presaturated,  $Coll(\omega,<\pi(\kappa))$-generic filter $g$ over $\bar{v}$ and $r\in\mbbR\cap \bar{v}[g]$.
      Then 
      \begin{center}
      $\phi(r,\bar{x})$ $\Leftrightarrow$ $\bar{v}[g]\models\phi(r,\pi(\bar{x}))$.
      \end{center}
  \end{itemize}
\vskip0.5\baselineskip
  \textbf{Claim 1.} \{$v\in[V_\delta]^\omega\mid v$ is good\} contains a club set.
\vskip0.5\baselineskip
  \textbf{Proof of Claim 1.}
  We call $v\in [V_\delta]^\omega$ \textit{bad} if $\bar{x},\lambda\in v\prec V_\delta$ but $v$ is not good.
  If $v$ is bad, there are a successor cardinal $\kappa\in v\cap \lambda$ such that $\mcalI^\kappa_\lambda$ is presaturated, a $Coll(\omega,<\pi(\kappa))$-generic filter $g$ over $\bar{v}$ and $r\in\mbbR\cap \bar{v}[g]$ such that 
  \begin{center}
  $\phi(r,\bar{x})\nLeftrightarrow \bar{v}[g]\models\phi(r,\pi(\bar{x}))$
  \end{center}
  where $\pi : v\rightarrow\bar{v}$ is the transitive collapse,
  and we can take a $Coll(\omega,<\kappa)$-name $\dot{\tau}$ such that $\pi(\dot{\tau})_g=r$.
  Then we call such a triple $\langle \kappa, g, \dot{\tau}\rangle$ a witness for the badness of $v$.
  
  Suppose that \{$v\in[V_\delta]^\omega\mid v$ is good\} does not contain a club set.
  Then $X=\{v\in[V_\delta]^\omega \mid v$ is bad\} is stationary.
  By the normality, there are a successor cardinal $\kappa<\lambda$ such that $\mcalI^\kappa_\lambda$ is presaturated and a $Coll(\omega,<\kappa)$-name $\dot{\tau}$ for a real such that 
  \begin{center}
  $Y=\{v\in X \mid \exists g$ ($\langle \kappa, g, \dot{\tau}\rangle$ is a witness of the badness of $v$)\}
  \end{center}
  is stationary.
  Let $Z=\{v\cap V_{\kappa+\omega}\mid v\in Y\}$.
  We note that $Z$ is stationary and $\dot{\tau}\in V_{\kappa+\omega}$.
  If $v\in[V_\delta]^\omega$, $\bar{x}, \lambda\in v\prec V_\delta$ and $v\cap V_{\kappa+\omega}\in Z$ then $v$ is bad and there is $g$ such that $\langle \kappa, g, \dot{\tau}\rangle$ witnesses this badness.
  
  Take a $\mbbQ^{\omega_1}_\lambda$-generic filter $G$ over $V$ such that $Z\in G$ and let $j : V\rightarrow M$ be the elementary embedding associated to $G$.
  In $V$, let $w\prec V_\delta$ be such that $V_{\kappa+\omega}\cup \{\bar{x},\lambda\}\subseteq w$ and $|w|<\lambda$.
  Since crit$(j)=\omega_1,  j(\omega_1)=\lambda$ and ${}^\omega M\subseteq M$ in $V[G]$,
  we have that $j[w]$ is countable in $V[G]$ and $j[w]\in M$.
  Since $Z\in G$,
  \begin{center}
  $j[w]\cap j(V_{\kappa+\omega})=j[V_{\kappa+\omega}]=j[\bigcup Z]\in j(Z)$.
  \end{center}
  By the elementarity of $j$ and the construction of $Z$,
  \begin{center}
  $M\models`` j[w]$ is bad $\land$ $\exists g$($\langle j(\kappa), g, j(\dot{\tau})\rangle$  witnesses the badness of $j[w]$)".
  \end{center}
  We fix $g\in M$ as above.
  Let $\epsilon : j[w]\rightarrow \bar{w}$ be the transitive collapse.
  Note that $\bar{w}$ is also the transitive collapse of $w$.
  Let $\sigma \colon w\rightarrow\bar{w}$ be the transitive collapse.
  Since $V_{\kappa+\omega}\subseteq w$ and $\epsilon\circ j\restrict w =\sigma$, it follows that $\sigma(\kappa)=\epsilon(j(\kappa))=\kappa$ and $\sigma(\dot{\tau})=\epsilon(j(\dot{\tau}))=\dot{\tau}$.
  So $g$ is $Coll(\omega,<\kappa)$-generic filter also over $\bar{w}$.
  Thus the above formula implies that 
  \begin{center}
      $M\models\phi(\epsilon(j(\dot{\tau}))_g, \epsilon(j(\bar{x})))
      \nLeftrightarrow
      \bar{w}[g]\models\phi(\epsilon(j(\dot{\tau}))_{g}, \epsilon(j(\bar{x})))$.
  \end{center}
  Thus 
  \begin{center} $M\models\phi(\dot{\tau}_g,\bar{x}) \nLeftrightarrow$ $\bar{w}[g]\models \phi(\dot{\tau}_g, \sigma(\bar{x}))$.
  \end{center}
  Since $V_{\kappa+\omega}\subseteq w$, the genericity of $g$ over $\bar{w}$ implies the genericity over $V$.
  Because of the elementarity of $\sigma \colon \overline{w}\prec V_\delta$ and the assumption for $\delta$,
  \begin{align*}
      \bar{w}[g]\models \phi(\dot{\tau}_g,\sigma(\bar{x}))
  &\Leftrightarrow V_{\delta}[g]\models \phi(\dot{\tau}_g, \bar{x})\\
  &\Leftrightarrow V[g]\models \phi(\dot{\tau}_g, \bar{x}).
  \end{align*}
  Combining the above arguments, we obtain that
  \begin{center}
  $M \models \phi(\dot{\tau}_g, \bar{x}) \nLeftrightarrow V[g]\models \phi(\dot{\tau}_g, \bar{x})$.
  \end{center}
  This contradicts Lemma \ref{absolute}.
  \hfill$\blacksquare$
\vskip0.5\baselineskip

  From Claim 1, we obtain a function $f : [V_\delta]^{<\omega} \rightarrow V_\delta$ such that if $v\in[V_\delta]^\omega$ and $f[[v]^{<\omega}]\subseteq v$ then $v$ is good.
 
 We define trees $S$ and $T$ on $\omega\times V_\delta\times V_\lambda$ as follows.
 
 $\langle s,t,u \rangle \in S$ if
 \begin{enumerate}
     \item $|s|=|t|=|u|$,
     \item $t(0)=\{\kappa,\dot{\tau}\}$, where $\kappa<\lambda$ is a successor cardinal such that $\mcalI^\kappa_\lambda$ is presaturated and $\dot{\tau}$ is a $Coll(\omega,<\kappa)$-name for a real,
     $t(i+1)=t(i)\cup\{u(i)\}\cup f[[t(i)\cup\{u(i)\}]^{<i}]$,
     \item $u$ is a descending sequence in $Coll(\omega,<\kappa)$ such that
      \begin{enumerate}
          \item if $\mcalD\in t(i)$ is a dense subset of $Coll(\omega,<\kappa)$ then $u(i)\in \mcalD$,
          \item $u(i)\Vdash_{\collkappa} \dot{\tau}(i)=s(i)$ for $i<\omega$,
          \item $u(0)\Vdash_{\collkappa}  \phi(\dot{\tau},\bar{x})$.
      \end{enumerate}      
 \end{enumerate}
 
 $\langle s,t,u\rangle \in T$ if
 $\langle s,t,u\rangle$ satisfies (1),(2),(3)(a) and (b) of the definition of $S$, and the condition
 $u(0)\Vdash_{\collkappa}  \lnot\phi(\dot{\tau},\bar{x})$ instead of (3)(c).
 Note that $t(i)$ is finite for each $i<\omega$.

We see that these trees are witnesses for the universally Baireness of $A$.
\vskip0.5\baselineskip
 \textbf{Claim 2.} $p[S]\subseteq A$ and $p[T]\subseteq \mbbR\setminus A$.
\vskip0.5\baselineskip
 \textbf{Proof of Claim 2.}
  Let $r\in p[S]$.
  Suppose that $\langle r,h,k\rangle \in [S]$ and 
  let $v=\bigcup_{i\in\omega} h(i), h(0)=\{\kappa, \dot{\tau}\}$. 
  Since $v$ is closed under $f$, $v$ is good.
  Let $g$ be the filter of $Coll(\omega,<\pi(\kappa))$ generated by $\{\pi(k(i))\mid i\in\omega\}$, where $\pi : v\rightarrow \bar{v}$ is the transitive collapse map.
  By (3) in the definition of $S$, $g$ is generic over $\bar{v}$, and $r=\pi(\dot{\tau})_{g}$.
  Since $v$ is good, it holds that 
  \begin{center}
  $\phi(r,\bar{x}) \Leftrightarrow \bar{v}[g]\models \phi(r,\bar{x})$.
  \end{center}
  We note that the right-hand side is true from the condition (3)(c).
  So $\phi(r,\bar{x})$ holds, thus $r\in A$.

  In the same way, we can prove that if $r\in p[T]$ then $r\notin A$.
  \hfill$\blacksquare$
\vskip0.5\baselineskip
 \textbf{Claim 3.} 
 Let $\kappa<\lambda$ be a successor cardinal such that $\mcalI^\kappa_\lambda$ is presaturated and $g$ be a $Coll(\omega,<\kappa)$-generic filter over $V$.
 Then $V[g]\models p[S]\cup p[T]=\mbbR$.
\vskip0.5\baselineskip
 \textbf{Proof of Claim 3.}
 Let $r\in \mbbR\cap V[g]$.
 We suppose that $V[g]\models\phi(r,\bar{x})$, and construct $h\in V_{\delta}^\omega$
 and $k\in V_{\lambda}^{\omega}$ such that $\langle r,h,k\rangle\in [S]$.
  We recursively define $h(i)$ and $k(i)$ for $i\in\omega$.
 
 Let $\dot{\tau}$ be a $\collkappa$-name for $r$ and $h(0)=\{\kappa,\dot{\tau}\}$.
 We take $p\in g$ such that 
 \begin{center}
 $p\Vdash_{\collkappa} ``\dot{\tau}(0)=r(0) \land \phi(\dot{\tau},\bar{x})"$,
 \end{center}
 and let $k(0)=p$.
 
 When $h(i)$ and $k(i)$ have been taken, take $h(i+1)$ and $k(i+1)$ as follows.
  \begin{enumerate}
      \item 
         $h(i+1)=h(i)\cup\{k(i)\}\cup f[[h(i)\cup\{k(i)\}]^{<i}]$,
      \item 
        \begin{enumerate}
         \item $k(i+1)\leq_{\collkappa} k(i+1)$ and $k(i)\in g$,
         \item if $\mcalD\in h(i+1)$ is a dense subset of $\collkappa$ then $k(i+1)\in \mcalD$,
         \item $k(i+1)\Vdash_{\collkappa} \dot{\tau}(i+1)=r(i+1)$.
        \end{enumerate}
  \end{enumerate}
  Such $k(i+1)$ exists since $h(i+1)$ is finite, and $g$ is generic.
  
  By the construction, $\langle r,h,k\rangle\in [S]$, so $r\in p[S]$.
  In the same way, we can prove that if $V[g]\models\lnot\phi(r,\bar{x})$
  then $r\in p[T]$.
  \hfill$\blacksquare$
\vskip0.5\baselineskip
  To see that $p[S]=A$ and $p[T]=\mbbR\setminus A$,
  it suffices to prove that $p[S]\cup p[T]=\mbbR$ by Claim 2.
  Let $r\in \mbbR^V$, $\kappa<\lambda$ be a successor cardinal such that $\mcalI^\kappa_\lambda$ is presaturated and $g$ be a $Coll(\omega,<\kappa)$-generic filter over $V$.
  By Claim 3, $V[g]\models ``r\in p[S] \lor r\in p[T]"$.
  We suppose that $V[g]\models r\in p[S]$.
  Let $S_r$ be a tree on $V_\delta\times V_\lambda$ defined by 
  \begin{center}
  $S_r=\bigcup_{n\in\omega}\{\langle t,u\rangle\mid \langle r\restrict n, t,u\rangle\in S\}$.
  \end{center}
  Then $S_r$ is ill-founded in $V[g]$ since $V[g]\models r\in p[S]$.
  By the absoluteness of the ill-foundedness,
  $S_r$ is ill-founded in $V$. That is, $r\in p[S]$.
  In the same way, we can prove that if $V[g]\models r\in p[T]$ then $r\in p[T]$ in $V$.
  The proof is completed.
 \end{proof}

\section{Question}
  We show that the presaturation at $\mu<\lambda$ implies the factorization at $\mu$ for the non-stationary tower $\mcalI^\kappa_\lambda$ in Theorem \ref{imply}.
  How about general $\kappa$-towers?

  \begin{question}
  Let $\kappa$ be a successor cardinal and $\mu, \lambda$ be inaccessible cardinals with $\kappa<\mu<\lambda$.
      Suppose that $\mcalI$ be a $\kappa$-tower on $\langle \mcalP_\kappa(V_\alpha)\mid \alpha\in[\kappa,\lambda)\rangle$.
      Assume that $\mcalI$ and $\mcalI\restrict\mu$ are presaturated.
      Then does $\mcalI$ factor at $\mu$ ?
  \end{question}

  And in this paper we give the sufficient conditions on stationary towers to derive the universally Baireness in $L(\mbbR)$. 
  The large cardinal axioms that imply the universally Baireness in $L(\mbbR)$ have many other consequences, among which the forcing absoluteness is closely related to the universally Baireness.

\begin{question}
    What conditions on stationary towers derive the generic absoluteness of the theory of $L(\mbbR)$ with real parameters?
    In particular, does the assumption of the stationary tower in Theorem \ref{maintheorem} derive that consequence?
\end{question}

\section*{Acknowledgement}
    I would like to express my deepest gratitude to my supervisor Hiroshi Sakai for his detailed teaching and worthwhile discussions on a lot of aspects.
    I would like to thank Daisuke Ikegami and Toshimichi Usuba for their meaningful comments in seminars about this result.

\bibliographystyle{IEEEtran}
\bibliography{stat_Baire_arxiv}

\end{document}